\newtheorem{theo}{Theorem}[section]
\newtheorem{lem}[theo]{Lemma}
\newtheorem{prop}[theo]{Proposition}
\newtheorem{coro}[theo]{Corollary}
\theoremstyle{definition}
\newtheorem{exam}[theo]{Example}
\newtheorem*{theorem*}{Theorem}
\newtheorem*{conjecture*}{Conjecture}
\numberwithin{equation}{section}
\newcommand{\R}{{\mathbb R}}
\newcommand{\Z}{{\mathbb Z}}
\newcommand{\Q}{{\mathbb Q}}
\begin{document}
\baselineskip 18pt

\title{The structure of tiles in $\Z_{p^n}\times\Z_q$ and $\Z_{p^n}\times\Z_p$ }

\author
{Shilei Fan}
\address
{Shilei FAN: School of Mathematics and Statistics, and Key Lab NAA--MOE, Central China Normal University, Wuhan 430079, China}
\email{slfan@ccnu.edu.cn}

\author{Mamateli Kadir}
\address{Mamateli Kadir: School of Mathematics and Statistics \& Research Center of Modern Mathematics and Applications,
Kashi University, Kashi 844000, China}
\email{mamatili880@163.com}

\author{Peishan Li}
\address
{Peishan Li: School of Mathematics and Statistics, and Hubei Key Lab--Math. Sci., Central China Normal University, Wuhan 430079, China}
\email{756298493@qq.com}

\thanks{S. L. FAN  and P. S. Li are  partially supported by NSFC (grants No. 12331004  and No.  12231013) and by NSF of Xinjiang Uygur Autonomous Region (Grant No. 2024D01A160). M. Kadir is supported by the NSF of China (Grant No. 12361015),
the Fundamental Research Founds for Colleges of XinJiang Education Department, China
(Grant No. XJEDU2023P108).}
\subjclass[2010]{Primary 43A99; Secondary 05B45, 26E30.}
\keywords{ Tiles, finite groups, $p$-homogeneous.}
\date{}



\begin{abstract} 
In this paper, we provide a geometric characterization of tiles in the finite abelian groups \( \mathbb{Z}_{p^n} \times \mathbb{Z}_q \) and \( \mathbb{Z}_{p^n} \times \mathbb{Z}_p \) using the concept of a \( p \)-homogeneous tree, which provides an intuitively visualizable criterion.
 \end{abstract}
\maketitle
\section{Introduction}

\medskip

Let $G$ be a locally compact abelian group. Consider a Borel measurable subset $\Omega$ in $G$ with $0<\mathfrak{m}(\Omega)<\infty$, where $\mathfrak{m}$ denotes a  Haar measure on $G$. We say that  $\Omega$ is a
{\bf  tile}  of $G$ by translation if there exists a  set $T \subset G$
such that
\[\sum_{t\in T} 1_\Omega(x-t) =1, \mathfrak{m}\ a.e. x\in G,\] where $1_A$ denotes the indicator function of a set $A$ in $G$. Such a set $T$ is called a
{\bf tiling complement} of $\Omega$ and $(\Omega,T)$ is called a {\bf tiling pair}.

Let  $\widehat{G}$ be the dual group of $G$ consisting of the continuous characters of $G$. The set $\Omega$ is called a {\bf spectral set} if there exists a set $\Lambda\subset\widehat{G}$
of continuous characters of $G$ which form a Hilbert basis of the space $L^2(\Omega)$. Such a set $\Lambda$ is called a spectrum of $\Omega$ and $(\Omega,\Lambda)$ is called a {\bf spectral pair}.

In harmonic and functional analysis, a fundamental question asks whether a geometric property of sets (tiling) and an analytic property (spectrality) are always two sides of the same coin. This question was initially posed by Fuglede \cite{F74} for finite-dimensional Euclidean spaces, stemmed from a question of Segal on the commutativity of certain partial differential operators.

\begin{conjecture*}[Fuglede 1974]\label{Fuglede}
A Borel set $\Omega\subset \mathbb{R}^d$ of positive and finite Lebesgue measure is a spectral set if and only if it is a tile.
\end{conjecture*}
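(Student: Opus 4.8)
The plan is to attack both implications through the Fourier transform of the indicator function $\widehat{1_\Omega}$, whose zero set $Z(\widehat{1_\Omega})$ simultaneously governs tiling and spectrality. On the one hand, a set $T$ is a tiling complement of $\Omega$ exactly when $1_\Omega * \delta_T \equiv 1$, which after a Fourier transform forces the support of the (distributional) transform of $\delta_T$, away from the origin, to lie inside $Z(\widehat{1_\Omega})$, together with the density constraint $|\Omega|\,\mathrm{dens}(T)=1$. On the other hand, $\Lambda$ is a spectrum of $\Omega$ precisely when $(\Lambda-\Lambda)\setminus\{0\} \subset Z(\widehat{1_\Omega})$ (orthogonality) and $\sum_{\lambda\in\Lambda}|\widehat{1_\Omega}(\xi-\lambda)|^2 \equiv |\Omega|^2$ (completeness), that is, when the single function $|\widehat{1_\Omega}|^2$ tiles $\mathbb{R}^d$ at constant height $|\Omega|^2$ by the translates $\Lambda$. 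The first task is therefore to show that these two zero-set conditions — one packing $\Omega$ in physical space, the other tiling $|\widehat{1_\Omega}|^2$ in frequency space — are interchangeable.

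To make the interchange tractable I would reduce the global problem on $\mathbb{R}^d$ to a finite-group problem. For the direction Tile $\Rightarrow$ Spectral, I would invoke a periodization step: replace a given tiling of $\Omega$ by a periodic one whose lattice of periods is rational, so that it descends to a tiling inside the finite abelian group $G = \tfrac{1}{M}\mathbb{Z}^d/\mathbb{Z}^d$ for a suitable common denominator $M$. Inside such a $G$ the tiling and spectral conditions become the purely combinatorial mask and zero-set conditions, and a spectrum found in $G$ pulls back through the quotient duality $\widehat{G}\hookrightarrow\widehat{\mathbb{R}^d}$ to a genuine spectrum of $\Omega$. For the reverse direction I would run the same dictionary: show that a spectrum of $\Omega$ may be taken rational and periodic, push it down to $G$, apply the finite-group equivalence to obtain a tiling complement in $G$, and lift it back to a periodic complement in $\mathbb{R}^d$.

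The key steps, in order, are: (i) the Fourier reformulation identifying $Z(\widehat{1_\Omega})$ as the common object; (ii) the periodization and rationality reductions that funnel both problems into a finite abelian group $G$; (iii) the finite-group equivalence itself, supplied for admissible groups by the $p$-homogeneous-tree characterization developed below; and (iv) the lifting of finite-group spectra and complements back to $\mathbb{R}^d$ through the quotient duality of step (ii).

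The hard part, I expect, is step (ii) together with the reach of step (iii). Periodization is not available for an arbitrary tile, and a spectrum need not be rational, so the reduction to a finite group is genuinely delicate and must be argued separately for each structural type of $\Omega$. More seriously, even once one lands in a finite group, the group produced by a general $\Omega \subset \mathbb{R}^d$ is an arbitrary $\mathbb{Z}_N^d$, whereas the equivalence is only controlled for groups of restricted arithmetic type such as $\mathbb{Z}_{p^n}\times\mathbb{Z}_q$ and $\mathbb{Z}_{p^n}\times\mathbb{Z}_p$. It is precisely the passage from these special groups to general finite groups — and hence to unrestricted $\Omega$ and large $d$ — where the argument meets its real resistance, and where a complete proof must either produce new finite-group structure theorems or confront that obstruction directly.
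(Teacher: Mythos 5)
There is nothing in the paper for your proposal to be compared against, because the statement you set out to prove is not a theorem of the paper at all: it is Fuglede's 1974 conjecture, quoted purely as historical context, and the paper itself records that it has been \emph{disproved} in full generality. For $d\ge 3$ both implications fail: Tao constructed a spectral set in $\mathbb{R}^5$ that is not a tile \cite{T04}, the example was pushed down to dimensions $4$ and $3$ \cite{M05,KM2006}, and tiles that are not spectral were produced in dimensions $3$, $4$ and $5$ \cite{KM06,FMM06}. The conjecture remains open only for $\mathbb{R}$ and $\mathbb{R}^2$. So the statement as written is false, no proof of it can exist, and the paper offers none; its actual theorems (Theorems \ref{main1} and \ref{main2}) concern the structure of tiles in the finite groups $\Z_{p^n}\times\Z_q$ and $\Z_{p^n}\times\Z_p$, where the tiling--spectral equivalence is already known.

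It is worth seeing exactly where your strategy breaks, because the failure is structural, not technical. Your step (i), the Fourier reformulation of tiling and spectrality through the zero set of $\widehat{1_\Omega}$, is correct and standard. Your step (ii), periodization of tilings and rationality of spectra, is a genuine (and nontrivial) reduction in dimension one, in the spirit of the reductions of Dutkay and Lai \cite{DL2014}, but it is not available for arbitrary tiles or spectra in higher dimensions. The fatal point is step (iii): the ``finite-group equivalence'' you want to invoke is \emph{false} for general finite abelian groups. Tao's counterexample is precisely a spectral subset of a finite group of the form $\Z_3^5$ that is not a tile, and the lifting dictionary of your step (iv) is exactly the mechanism by which that finite-group counterexample becomes a counterexample in $\mathbb{R}^5$; the transference machinery you describe transports counterexamples from finite groups to $\mathbb{R}^d$ just as readily as it would transport proofs. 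The paper's $p$-homogeneous-tree results for $\Z_{p^n}\times\Z_q$ and $\Z_{p^n}\times\Z_p$ cannot be leveraged up to arbitrary groups $\Z_M^d$, because in those larger groups the equivalence genuinely fails. Thus the obstruction you flag in your final paragraph is not a hard step awaiting a cleverer argument: it is a disproof of the strategy, and, for $d\ge 3$, of the statement itself.
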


The original {\bf Fuglede conjecture} has been disproven in its full generality for dimensions 3 and above for both directions \cite{FMM06,KM2006,KM06,M05,T04}. This means that neither implication (tiling implies spectral and vice versa) holds true in these higher dimensions.  However, the connection between tiling and spectral properties remains an active area of research, particularly in lower dimensions. The conjecture is still open for the one-dimensional and two-dimensional cases ($\R$ and $\R^2$). There might be a deeper relationship to be discovered in these simpler settings (see \cite{DL2014} for a focused look on dimension $1$).

Despite the general counterexamples, the conjecture has been successfully proven for convex sets  \cite{IKT03,GL17,LM21}  in all dimensions.
On the other hand, there has been a growing interest in extending the Fuglede conjecture beyond the realm of Euclidean spaces. Fuglede himself hinted at the possibility of exploring the conjecture in different settings. This has led to a more general version of the conjecture applicable to locally compact abelian groups.
 Formally stated, Fuglede's conjecture on a locally compact abelian group $G$ asks:
 {\bf Is a Borel set $\Omega\subset G$ of positive and finite Haar measure is a spectral set if and only if it is a tile?}

The generalized Fuglede conjecture has been proved for different groups, particularly for finite abelian groups.  These successes include groups like $\mathbb{Z}_{p^{n}}$ \cite{L02}, $\mathbb{Z}_{p}^{d}$ ($p=2$ and $d\le6$; $p$ is an odd prime and $d=2$; $p=3,5,7$ and $d=3$) \cite{AABF17,FMV2022,FS20,IMP17},  $\mathbb{Z}_{p}\times\mathbb{Z}_{p^{n}}$ \cite{IMP17,S20,Z2022},    $\mathbb{Z}_{p}\times\mathbb{Z}_{pq}$ \cite{KS2021}  and $\mathbb{Z}_{pq}\times\mathbb{Z}_{pq}$ \cite{FKS2012},  $\mathbb{Z}_{p^{n}q^{m}}$ ($p<q$ and $m\le9$ or $n\le6$; $p^{m-2}<q^{4}$) \cite{KMSV20,M21,MK17}, $\mathbb{Z}_{pqr}$ \cite{S19}, $\mathbb{Z}_{p^{2}qr}$ \cite{Somlai21} and $\mathbb{Z}_{pqrs}$ \cite{KMSV2012}, where $p,q,r,s$ are distinct primes. Fan et al. \cite{FFLS19,FFS16} established the validity of the conjecture for the field $\Q_p$ of $p$-adic numbers, and  obtained the geometric structure of tiles and spectral sets in $\Q_p$. The  concept of $p$--homogeneous tree structure was introduced in \cite{FFS16} to characterize the tile in  $\mathbb{Z}_{p^{n}}$.

It is known that spectral conjecture holds on  $\Z_{p^n}\times\Z_q$ and $\Z_{p^n}\times\Z_p$. However, the geometric structures of tiles were not mentioned. In this paper, we provide a geometric characterization of tiles in the
finite abelian groups $\Z_{p^n}\times\Z_q$ and $\Z_{p^n}\times\Z_p$  by the concept of $p$-homogeneous tree.

 Firstly, we give a quick recall of the concept  $p$-homogeneous set in the cyclic group  $\Z_{p^n}$. Consider the group $\Z_{p^n}=\{0, 1, \cdots, p^n-1\}$ as a finite tree $\mathcal{T}^{(n )}$,
where $n$ is an integer (see Figure \ref{fig:1}). The vertices of $\mathcal{T}^{(n )}$ are the sets of $\Z_{p^\gamma}, \ 0\leq \gamma\leq n$
and translations of them. The point of $\Z_p$ corresponding to the point of vertices at level $\gamma$.
The set of edges consists of pairs $(x, y)\in \Z_{p^\gamma}\times \mathbb{Z}_{p^{\gamma+1}}$ such that
$x \equiv y (\bmod p^{\gamma})$, where $0 \leq \gamma \leq n-1$.
Each subset $C \subset \Z_{p^n}$ will determine a subtree of
$\mathcal{T}^{(n )}$, denoted by $\mathcal{T}_{C}$, which consists of the paths from the root to the boundary points
in $C$.
%

 \begin{figure}[h]
  	\centering
  	\includegraphics[width=0.7\linewidth]{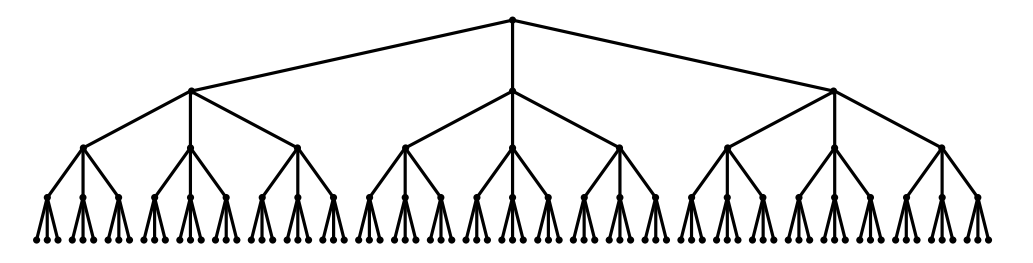}
  	\caption{Consider the set $\Z_{3^4}=\{0, 1, 2, \cdots, 80\}$ as a tree $\mathcal{T}^{\left(4\right)} $. }
  	\label{fig:1}
  \end{figure}

 \begin{figure}[h]
  	\centering
  	\includegraphics[width=0.7\linewidth]{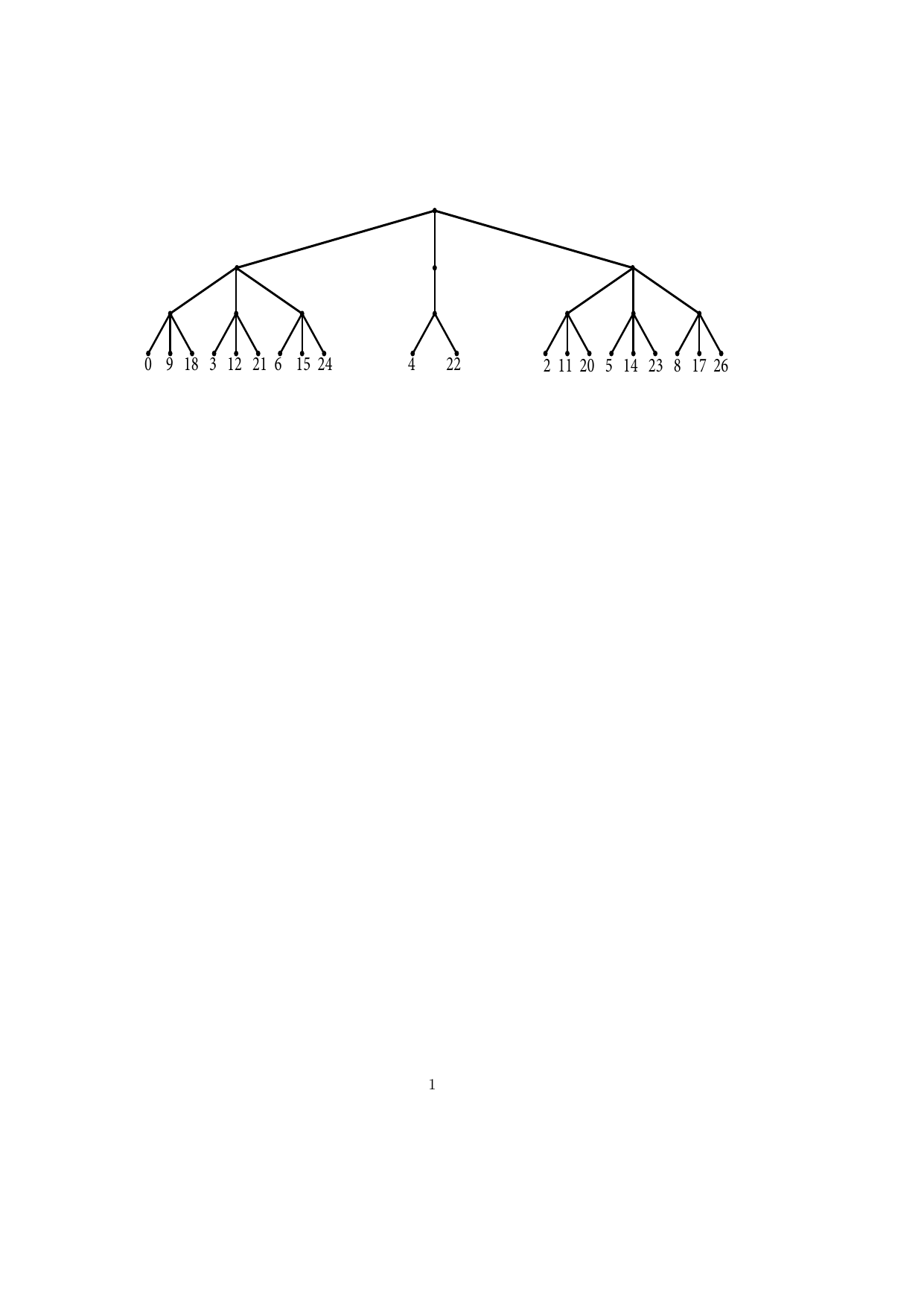}
  	\caption{The subtree $\mathcal{T}_{C}$ of  $\mathcal{T}^{\left(3\right)} $ with $C=\{0,2,3,4,\cdots, 24,26\}$. }
  	\label{fig:sub}
  \end{figure}

Let $I \subseteq \{0, 1, 2, \dots, n -1\}$, $J\in \{0, 1, 2, \dots, n -1\}\backslash I$. We say a
subtree $\mathcal{T}_C$ of $\mathcal{T}^{(n)}$ is a $\mathcal{T}_I$-form, if each point on the $I$-th levels of the tree $\mathcal{T}_C$
has $p$ descendants, and the each point on the $J$-th levels only has one descendant.
A $\mathcal{T}_I$-form tree is called a finite {\em $p$-homogeneous tree}. We call a set $C \subset \Z_{p^n}$
a {\em $p$-homogeneous set}, if the corresponding tree $\mathcal{T}_C$ is a {\em $p$-homogeneous tree}.
A special subtree $\mathcal{T}_I$ is shown in figure \ref{fig:2}.
  \begin{figure}[h]
  	\centering
  	\includegraphics[width=0.7\linewidth]{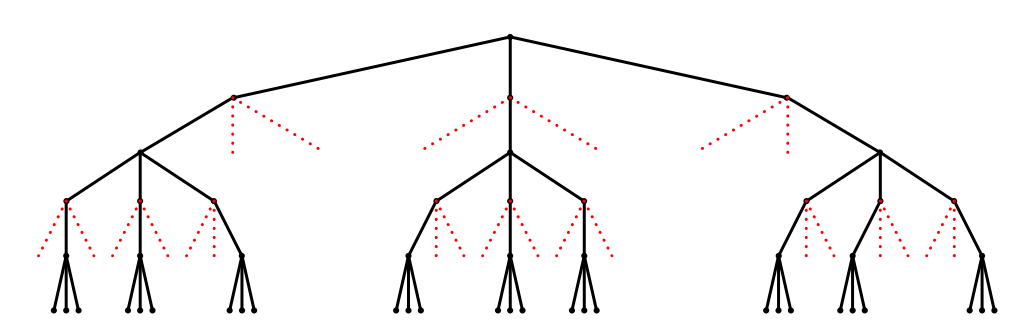}
  	\caption{For $p=3$, a ${{\mathcal T}_I}$-form tree with $n=5$, $I=\{0,2,4\}$.}
  	\label{fig:2}
  \end{figure}


\subsection{The structure of tiles in  $\Z_{p^n}\times\Z_q$}
In \cite{CM99}, Coven and Meyerowitz introduced the so called CM conditions, which play a crucial role in characterizing  a set $A$ that tiles the finite cyclic group  $\Z_N$ by
translations.

Let $A \subseteq \Z_N$ be a multi-set, and let $m_a$ denote the multiplicity of $a \in A$. The \emph{mask polynomial} of $A$ is defined as
\[
A(x) = \sum_{a \in A} m_a x^a.
\]
Denote by $\omega_N=e^{2\pi i /N}$,  which is a primitive $N$-th root of unity. 

Denote by $\Phi_s(x)$  the $s$-th cyclotomic polynomial. Let $S$ denote the set of prime powers dividing $N$, and define
\[
S_A = \{s \in S : \Phi_s(x) \mid A(x)\}.
\]
Consider the following algebraic conditions
\begin{itemize}
\item[(T1)] $|A| = A(1) = \prod_{s \in S_A} \Phi_s(1)$.
\item[(T2)] Let   $s_1, \dots, s_m \in S_A$ be powers  of different primes. Then the polynomial $\Phi_{s_1 \dotsm s_m}(x)$ divides $A(x)$.
\end{itemize}
 It is proved in \cite{CM99} that  if $A$ satisfies properties \textbf{\rm (T1)} and \textbf{\rm (T2)}, then $A$ tiles $\Z_N$ by translations.

Write  the cyclic group $\Z_{p^nq}$ as the product form $\Z_{p^n}\times\Z_q$. We give a geometric characterization of tiles in  $\Z_{p^n}\times\Z_q$ by their $p$-homogeneous structure. For a set $E$, denote by $|E|$ the cardinality  of $E$.

\begin{theo}\label{main1}
 Let $ \Omega=\bigsqcup\limits_{j=0}^{q-1}(  \Omega_{j}  \times \left \{ j \right \})$ be a tile in $\Z_{p^n}\times\Z_q$
 with $ \Omega_j\subset \Z_{p^n}$.
 Then we have the following two cases.
\begin{enumerate}[{\rm(1)}]
  \item  If $|\Omega|=p^m$, then $ \Omega_0,  \Omega_1 ,\dots ,  \Omega_{q-1}$ are disjoint, and $\bigsqcup\limits_{j=0}^{q-1}  \Omega_{j}$ is $p$-homogeneous.

  \item If $|\Omega| = {p^m}q$, then $|{ \Omega_i}|={p^m}$ for any $i$, and all $ \Omega_i$ are  $p$-homogeneous with a common branched level set.
\end{enumerate}
\end{theo}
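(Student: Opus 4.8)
The plan is to convert the Coven--Meyerowitz conditions into vanishing statements for the mask polynomials of the slices and then read off the $p$-homogeneous structure from the resulting cyclotomic divisibilities. Since $\Z_{p^n}\times\Z_q\cong\Z_{p^nq}$ has only two prime factors, a tile satisfies both {\rm (T1)} and {\rm (T2)}, which I use throughout. For $\Omega_j\subseteq\Z_{p^n}$ write $\Omega_j(x)=\sum_{a\in\Omega_j}x^a$, and for $(\xi,\eta)\in\Z_{p^n}\times\Z_q$ set $\widehat{1_\Omega}(\xi,\eta)=\sum_{j=0}^{q-1}\omega_q^{\eta j}\,\Omega_j(\omega_{p^n}^{\xi})$. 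The key bookkeeping fact, from $\gcd(p,q)=1$, is that the characters of order $p^i$ are exactly the $(\xi,0)$ with $\mathrm{ord}(\xi)=p^i$, those of order $q$ are the $(0,\eta)$ with $\eta\neq0$, and those of order $p^iq$ are the $(\xi,\eta)$ with $\mathrm{ord}(\xi)=p^i$ and $\eta\neq0$. Hence $\Phi_{p^i}\mid\Omega\iff\sum_j\Omega_j(\omega_{p^n}^{\xi})=0$ for all such $\xi$; $\Phi_q\mid\Omega\iff\sum_j\omega_q^{\eta j}|\Omega_j|=0$ for all $\eta\neq0$; and $\Phi_{p^iq}\mid\Omega\iff\widehat{1_\Omega}(\xi,\eta)=0$ for all such $\xi,\eta$. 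I also record the elementary fact that for $C\subseteq\Z_{p^n}$ with $S_C=\{i:\Phi_{p^i}\mid C(x)\}$ one has $\prod_{i\in S_C}\Phi_{p^i}\mid C(x)$, so evaluating at $x=1$ gives $p^{|S_C|}\mid|C|$. Finally, writing $A=\{i:\Phi_{p^i}\mid\Omega\}$, condition {\rm (T1)} gives $|\Omega|=p^{|A|}$ if $q\notin S_\Omega$ and $|\Omega|=p^{|A|}q$ if $q\in S_\Omega$; so in case (1) we get $q\notin S_\Omega$ and $|A|=m$, and in case (2) we get $q\in S_\Omega$ and $|A|=m$.

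For case (2) I would argue as follows. From $q\in S_\Omega$ the relation $\sum_j\omega_q^{\eta j}|\Omega_j|=0$ for all $\eta\neq0$ says the $\Z_q$-Fourier transform of $(|\Omega_j|)_j$ is supported at $0$, whence all $|\Omega_j|$ equal $p^m$. Now fix $i\in A$. Since $p^i,q\in S_\Omega$, {\rm (T2)} gives $\Phi_{p^iq}\mid\Omega$, so $\sum_j\omega_q^{\eta j}\Omega_j(\omega_{p^n}^{\xi})=0$ for every $\xi$ of order $p^i$ and every $\eta\neq0$, while the case $\eta=0$ is exactly $\Phi_{p^i}\mid\Omega$. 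Thus, for each such $\xi$, the sequence $\bigl(\Omega_j(\omega_{p^n}^{\xi})\bigr)_j$ has vanishing $\Z_q$-transform at every $\eta$, so by Fourier inversion on $\Z_q$ each $\Omega_j(\omega_{p^n}^{\xi})=0$. Hence $A\subseteq S_{\Omega_j}$ for every $j$, and since $|\Omega_j|=p^m$ forces $|S_{\Omega_j}|\leq m=|A|$ by the divisibility fact above, we get $S_{\Omega_j}=A$ and $|\Omega_j|=p^{|S_{\Omega_j}|}$. By the $\Z_{p^n}$ characterization each $\Omega_j$ is $p$-homogeneous with branched level set $A-1$, the same for all $j$.

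Case (1) is where the real work lies, because the $|\Omega_j|$ need not be equal and the statement asserts that the a priori \emph{multiset} $M=\sum_j 1_{\Omega_j}$ on $\Z_{p^n}$ is in fact a genuine, $p$-homogeneous set. The plan is to build an explicit complement in $\Z_{p^n}$ and let nonnegativity finish the job. From $A\subseteq S_M$ we have $M(\omega_{p^n}^{\xi})=0$ for all $\xi$ of order $p^i$, $i\in A$. Let $T''\subseteq\Z_{p^n}$ be the standard $p$-homogeneous set with branched level set $\{i-1:i\in\{1,\dots,n\}\setminus A\}$; then $|T''|=p^{n-m}$, and a direct factorization of its mask polynomial shows $T''(\omega_{p^n}^{\xi})=0$ for every $\xi$ of order $p^i$ with $i\notin A$. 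Consequently, for every nonzero $\xi$ at least one of $M(\omega_{p^n}^{\xi}),T''(\omega_{p^n}^{\xi})$ vanishes, while at $x=1$ the product equals $p^m\cdot p^{n-m}=p^n$; that is, $M*1_{T''}\equiv1$ on $\Z_{p^n}$. Since $M\geq0$, the identity $M*1_{T''}\equiv1$ is violated as soon as some value of $M$ exceeds $1$, so $M$ is $0$--$1$ valued: the $\Omega_j$ are pairwise disjoint and $M=\bigsqcup_j\Omega_j$ genuinely tiles $\Z_{p^n}$ with complement $T''$. By the $\Z_{p^n}$ characterization, $M$ is $p$-homogeneous. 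Note this case uses only {\rm (T1)} together with the divisibilities $\Phi_{p^i}\mid\Omega$, and never refers to the actual complement of $\Omega$.

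The main obstacle is precisely the disjointness in case (1): neither {\rm (T1)} nor the pure $p$-power cyclotomic data of $\Omega$ rules out repetitions in $M$, so a multiset must be shown to be a set. The device above—manufacturing $T''$ so that it annihilates exactly the Fourier modes that $M$ does not, and then invoking $M\geq0$—is the crux and deserves isolation as a lemma. A secondary, routine point is to pin down the dictionary between a branched level set $I$ and $S_C=I+1$ in the $\Z_{p^n}$ characterization (branching at level $\gamma$ corresponding to $\Phi_{p^{\gamma+1}}$), so that the leaf count $p^{|I|}$ of a $\mathcal{T}_I$-form tree matches $|C|=p^{|S_C|}$ and the level sets come out consistently as $A-1$ in both cases.
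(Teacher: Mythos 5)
Your proposal is correct, and it stays within the paper's general framework (the Coven--Meyerowitz conditions combined with the $p$-homogeneity criterion of Lemma \ref{thm5.4}), but both decisive steps are executed differently. For case (1), the paper applies Lemma \ref{thm5.4} directly to the projected multiset $\pi_1(\Omega)$: that lemma is stated for multisets, and its conclusion $|C|=p^k$ together with the ${\mathcal T}_I$-form structure already forces every multiplicity to be one, so the disjointness of the $\Omega_j$ comes for free. You instead reprove this multiset-to-set step by hand, manufacturing an explicit $p$-homogeneous complement $T''$ whose mask polynomial kills exactly the complementary cyclotomic factors and then combining $M*1_{T''}\equiv 1$ with $M\ge 0$; this is a valid, self-contained substitute that essentially reproduces the proof of the lemma you could have cited. (You also extract the count $|A|=m$ from {\rm(T1)} rather than from the tiling complement's zero set via Lemma \ref{lem5.1}(3); both work.) For case (2), after the common step of using {\rm(T2)} to get $\Phi_{p^iq}\mid\Omega(x)$ and hence vanishing of $\widehat{1_\Omega}$ at $(\xi,\eta)$ for every $\eta$, the paper builds a product-form tiling complement $T_0\times\{0\}$ and deduces that each $(\Omega_j,T_0)$ is a tiling pair of $\Z_{p^n}$, whereas you invert the $\Z_q$-Fourier transform fiberwise (this is exactly the paper's Lemma \ref{lem:periodic}) to conclude $\Omega_j(\omega_{p^n}^{\xi})=0$ for every $j$ and then apply the criterion to each slice. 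Your route is slightly more direct in that it avoids verifying that $T_0\times\{0\}$ is genuinely a tiling complement, while the paper's route makes the common branched level set visible immediately from the shared complement $T_0$; in your version it follows instead from the equality $S_{\Omega_j}=A$ for all $j$. I found no gaps.
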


\subsection{The structure of tiles in  $\Z_{p^n}\times\Z_p$}
Now we shall investigate the geometric structure of tiles in the group $\Z_{p^n}\times\Z_p$. Let $ \Omega$ be a tile of $\Z_{p^n}\times\Z_p$.
It is evident that $|\Omega|$ divide $p^{n+1}$. Hence, assume that  $| \Omega|=p^i$ for some $1 \leq i\leq n$.
 Define a map $\pi_1$ from the group $ \Z_{p^n}\times\Z_p$ to the group  $\Z_{p^n}$ by

\[ \pi_1(a, b)=a, \quad  \hbox{ for } (a, b)\in \Z_{p^n}\times\Z_p. \]

 Let
 \[\mathcal{Z}_ \Omega=\Big\{g\in \Z_{p^n}\times\Z_p: \widehat {1_ \Omega}(g)=0 \Big\}\] be the set of zeros of the Fourier
transform of the function $1_\Omega$.
We have the following characterization theorem of tiles in $\Z_{p^n}\times\Z_p$.

\begin{theo}\label{main2}
 Assume  $(\Omega,T)$ is  a tiling pair in $\Z_{p^n}\times\Z_p$ with $| \Omega|=p^t$.
Let
$$
\mathcal{I}_{\Omega}=\big\{i\in \{0,\cdots \ n-1\}: (p^i, 0)\in \mathcal{Z}_ \Omega \big\}.
$$

We distinguish three cases.
\begin{enumerate}[{\rm(1)}]
\item If $| \mathcal{I}_{\Omega}|=t$, then the set $\pi_1( \Omega)$ is a $p$-{\it homogeneous}  in  $\Z_{p^n}$ with $|\pi_1( \Omega)|=p^t.$
\item If $| \mathcal{I}_{\Omega}|=t-1$ and    $ (p^j, b)\notin \mathcal{Z}_\Omega$  for each  $ j\in  \{0,1,\cdots, n\}\setminus \mathcal{I}_{\Omega}$ and $b\in \Z_p^{*}$,  then  the sets
\[ \Omega_i=\{x\in \Z_{p^n}: (x, i) \in \Omega\}\]  are  $p$-homogenous in $\Z_{p^n}$ with  a same branched level set and  $|\Omega_i|=p^{t-1}$.
 \item If $| \mathcal{I}_{\Omega}|=t-1$ and   $(p^{j}, b) \in \mathcal{Z}_{\Omega}$ for some  $j\in   \{0,1,\cdots, n\}\setminus \mathcal{I}_{\Omega}$ and $b\in \Z_p^*$, then
 the set  \[\widetilde{\Omega} = \{ x +b_0 y p^{n-j_0-1}: (x, y) \in   \Omega\} \] is  $p$-homogeneous in $\Z_{p^n}$  with
 $| \widetilde{\Omega}|=p^t$,  where   $j_0$ is the minimal  number in
 $\{0,1,\cdots, n\}\setminus\mathcal{I}_{\Omega}$ such that   $(p^{j_0},b_0)\in \mathcal{Z}_\Omega $ for some  $ b_0\in \Z_p^*$.
  For all $(x,y)\in \Omega$, the sets \[\Omega_{x,y}:=\{x^{\prime }\in\Z_{p^n}: (x^{\prime},y)\in \Omega  \text{ and } x^{\prime}\equiv x \mod p^{n-j_0-1}  \
\}\]  are  $p$-homogenous with  a same branched level set.
\end{enumerate}
\end{theo}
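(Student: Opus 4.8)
The plan is to translate tiling into Fourier language and then to reduce everything to the one–dimensional theory on $\Z_{p^n}$ recalled above. Write $G=\Z_{p^n}\times\Z_p$ and identify $\widehat G$ with $G$; then $(\Omega,T)$ is a tiling pair iff $|\Omega|\,|T|=p^{n+1}$ and $\mathcal{Z}_\Omega\cup\mathcal{Z}_T\supseteq G\setminus\{(0,0)\}$. For the ``horizontal'' characters I would push $1_\Omega$ forward along $\pi_1$, setting $f_\Omega(a)=|\Omega\cap(\{a\}\times\Z_p)|$, a non–negative function on $\Z_{p^n}$ with $\sum f_\Omega=p^t$ and $f_\Omega\le p$. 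Since $\omega_{p^n}^{p^i}$ is a primitive $p^{n-i}$-th root of unity, $\widehat{1_\Omega}(p^i,0)=\widehat{f_\Omega}(p^i)$, so $(p^i,0)\in\mathcal{Z}_\Omega$ is equivalent to $\Phi_{p^{n-i}}(x)\mid f_\Omega(x)$, i.e. to branching of the associated tree at level $n-1-i$. Rationality of $f_\Omega$ makes $\mathcal{Z}_\Omega\cap(\Z_{p^n}\times\{0\})$ a union of full Galois classes, so it is completely recorded by $\mathcal{I}_\Omega$. The first quantitative step is the dichotomy $|\mathcal{I}_\Omega|\in\{t-1,t\}$: covering the slice $b=0$ forces $\mathcal{I}_\Omega\cup\mathcal{I}_T=\{0,\dots,n-1\}$, and writing $P_\Omega=\prod_{i\in\mathcal{I}_\Omega}\Phi_{p^{n-i}}$, which divides $f_\Omega$ with $P_\Omega(1)=p^{|\mathcal{I}_\Omega|}\mid f_\Omega(1)=p^t$, gives $|\mathcal{I}_\Omega|\le t$ and likewise $|\mathcal{I}_T|\le n+1-t$; combined with the covering this yields $|\mathcal{I}_\Omega|\ge n-|\mathcal{I}_T|\ge t-1$.

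For Case (1), $|\mathcal{I}_\Omega|=t$ makes $P_\Omega$ the mask polynomial of a genuine $p$-homogeneous set of size $p^t$ (a product of distinct prime–power cyclotomics is a $0$--$1$ polynomial), and $P_\Omega\mid f_\Omega$ with $P_\Omega(1)=f_\Omega(1)$. I would then argue that the quotient is a single monomial, so that $f_\Omega$ is itself the indicator of a translate of that $p$-homogeneous set; in particular $\pi_1$ is injective on $\Omega$, $|\pi_1(\Omega)|=p^t$, and $\pi_1(\Omega)$ is $p$-homogeneous by the $\Z_{p^n}$ characterization. For Case (2), $|\mathcal{I}_\Omega|=t-1$ leaves one branching level unaccounted for by horizontal characters, while the hypothesis that no mixed character $(p^j,b)$ with $b\ne0$, $j\notin\mathcal{I}_\Omega$ lies in $\mathcal{Z}_\Omega$ prevents the second coordinate from contributing any new zero. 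Analysing the slices $b\ne0$ then forces every fibre $\Omega_i$ to carry the same $p^{t-1}$ horizontal zeros, so each $\Omega_i$ is $p$-homogeneous of size $p^{t-1}$ with the common branched level set $\{\,n-1-i:i\in\mathcal{I}_\Omega\,\}$.

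The decisive case is (3), and it rests on the following identity for the shear $\sigma(x,y)=x+b_0yp^{\,n-1-j_0}$: since $\omega_p=\omega_{p^n}^{p^{n-1}}$ one has
\[
\widehat{1_\Omega}(p^{j_0},b_0)=\sum_{(x,y)\in\Omega}\omega_{p^n}^{-p^{j_0}(x+b_0yp^{\,n-1-j_0})}=\widehat{1_{\widetilde\Omega}}(p^{j_0}),
\]
so the mixed zero $(p^{j_0},b_0)\in\mathcal{Z}_\Omega$ is exactly the statement $\Phi_{p^{\,n-j_0}}\mid\widetilde\Omega(x)$; that is, the shear converts the missing branching into an honest first–coordinate branching of $\widetilde\Omega$ at level $n-1-j_0$. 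The same computation shows $\sigma$ preserves the branchings coming from levels above $j_0$, so $\widetilde\Omega$ acquires $t$ horizontal branchings with total mass $p^t$ and, exactly as in Case (1), is $p$-homogeneous with $|\widetilde\Omega|=p^t$. Finally, restricting to a fixed residue $x'\equiv x\bmod p^{\,n-1-j_0}$ freezes the digit that $\sigma$ alters, so below that level the tiling still looks unsheared and the fibres $\Omega_{x,y}$ inherit the common–branched–level structure established in Case (2).

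The main obstacle I anticipate is Case (3): one must show that $\sigma$ is injective on $\Omega$ (so that $\widetilde\Omega$ is a genuine set of size $p^t$ rather than a multiset), that the minimality of $j_0$ is precisely what guarantees the low–level ($i<j_0$) branchings are not scrambled by $\sigma$, and that the two conclusions—global $p$-homogeneity of $\widetilde\Omega$ and fibrewise homogeneity of the $\Omega_{x,y}$—are mutually consistent. A secondary but recurring difficulty is upgrading the divisibility $P_\Omega\mid f_\Omega$ with matching value at $1$ to an actual equality of (translated) $0$--$1$ polynomials in Cases (1) and (3); here I would lean on the non–negativity together with the uniform bound $f_\Omega\le p$ and the classical equivalence of tiling and $p$-homogeneity on $\Z_{p^n}$.
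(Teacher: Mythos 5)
Your skeleton matches the paper's: the dichotomy $|\mathcal{I}_\Omega|\in\{t-1,t\}$ via cyclotomic divisibility and the covering $\mathcal{Z}_\Omega\cup\mathcal{Z}_T\supseteq G\setminus\{0\}$, reduction of Case (1) to the one-dimensional theory through the pushforward multiset, and the shear $\sigma(x,y)=x+b_0yp^{n-1-j_0}$ in Case (3), which creates the zero at $p^{j_0}$ and leaves the zeros at $p^{j}$, $j>j_0$, untouched. For the $0$--$1$/injectivity worry in Cases (1) and (3), the right tool is the multiset version of the homogeneity criterion (Lemma \ref{thm5.4}): a multiset of total mass $p^t$ in $\Z_{p^n}$ with $t$ prescribed zeros is automatically a genuine $p$-homogeneous set. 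Your proposed alternative, that the cofactor of $P_\Omega$ in $f_\Omega$ is a single monomial, is not a valid step as stated, since that cofactor could a priori have negative coefficients.

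The genuine gap is that you never use the tiling complement $T$ beyond the covering property, and both Case (2) and the low levels of Case (3) require more. In Case (2), $(p^i,0)\in\mathcal{Z}_\Omega$ only gives $\sum_b\widehat{1_{\Omega_b}}(p^i)=0$, not the fibrewise vanishing; for that you need $(p^i,b)\in\mathcal{Z}_\Omega$ for \emph{all} $b\in\Z_p$, after which a Vandermonde argument (Lemma \ref{lem:periodic}) splits the zero across the fibres. Likewise in Case (3), for $i\in\mathcal{I}_\Omega$ with $i<j_0$ the shear term $b_0yp^{\,n-1-j_0+i}$ does not vanish modulo $p^n$, and $\widehat{1_{\widetilde\Omega}}(p^i)$ is not a Fourier coefficient of $1_\Omega$ on $\Z_{p^n}\times\Z_p$; minimality of $j_0$ only controls exponents in $\mathcal{J}$, not these $i\in\mathcal{I}_\Omega$. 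The paper supplies the missing input by a dimension count in $L^2(T)$: if $(0,1)\in\mathcal{Z}_T$ (resp.\ $(p^i,b)\in\mathcal{Z}_T$ for some $i\in\mathcal{I}_\Omega$ with $i<\gamma_\Omega$), one exhibits $p^{n-t+2}$ pairwise orthogonal characters on $T$, contradicting $|T|=p^{n-t+1}$; hence $(0,1)\in\mathcal{Z}_\Omega$ and $(p^i,b)\in\mathcal{Z}_\Omega$ for all $b$ (Lemmas \ref{lem-min} and \ref{lem-min2}). Without this step, your phrases ``analysing the slices $b\neq 0$ then forces\dots'' and ``minimality of $j_0$ guarantees\dots'' restate the conclusion rather than prove it. The final assertion of Case (3), homogeneity of the fibres $\Omega_{x,y}$, also needs a genuine argument (the paper's Lemma \ref{lem4.9}: an induction combining the equidistribution of $\Omega$ over the planes $H((p^{j_0},b_0),k)$ with the decomposition Lemma \ref{fenzhi}); ``below that level the tiling still looks unsheared'' does not substitute for it.
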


\medskip

\section {Preliminaries}

\medskip

In this section, we present preliminaries for the proof of the main
results. We start with the recall of the Fourier transform on finite abelian groups,
basic properties of spectral sets and tiles on finite abelian groups, $p$-homogeneous sets and $\Z$-module generated by the $p^n$-th roots of unity.

Let $G$ be a finite abelian group, and let $\mathbb{C}$ be the set of complex numbers. A character on $G$ is a group homomorphism $\chi: G \to \mathbb{C}$. The dual group of a finite abelian group $G$, denoted as $\widehat{G}$, is the character group of $G$.


For a finite abelian group $G$, it can be written as  $\Z_{n_1} \times \Z_{n_2} \times \cdots \times \Z_{n_s}$ with $n_1\mid n_2 \mid \cdots \mid n_s$.  For $g=(g_1,g_2, \dots, g_s)\in G$, denote by $\chi_g$ the character
\[
\chi_{g}(x_1, \dots, x_s) = e^{2\pi i \sum_{j=1}^{s} \frac{x_jg_j}{n_j}}.
\]
For $g,h\in G$, it is clear that
\[\chi_{g+h}(x)=\chi_{g}(x)\cdot \chi_{h}(x).\]
The dual group  $\widehat{G}$ is isomorphic to itself, i.e.
\[\widehat{G}=\widehat{\Z}_{n_1} \times \widehat{\Z}_{n_2} \times \cdots \times \widehat{\Z}_{n_s}\cong\Z_{n_1} \times \Z_{n_2} \times \cdots \times \Z_{n_s}.\]

For  two  finite abelian groups $G_1,G_2$, let $G$ be their  product $G_1\times G_2$. It is known that
$\widehat{G}=\widehat{G_1}\times \widehat{G_2}\cong G_1\times G_2$, and each character in $\widehat{G}$ can be written as \[\chi_{(g_1,g_2)}(x_1,x_2)=\chi_{g_1}(x_1)\chi_{g_2}(x_2),\]
where $g_1\in G_1$ and $g_2\in G_2$.

\subsection{Fourier transform on the finite groups.}
The Fourier transform on $G$ is a linear transformation that maps a function $f: G \rightarrow \mathbb{C}$  to a function $\widehat{f}: \widehat{G} \rightarrow \mathbb{C}$ defined as follows:
\[\widehat{f}(g) = \sum_{x \in G} f(x) \cdot \chi_g(x)\]
where $\chi_g$ is the character of $G$  corresponding to $g$, and $f(x)$ is the value of the function $f$ at the element $x$ in $G$.

For $A \subset G$, denote by
\[\mathcal{Z}_{A}:=\big\{x\in \widehat{G}: \widehat{1_{A}}(x)=0\big\}\]
the set of zeros  of the Fourier transform of the indicator function $1_A$. It is clear that $\mathcal{Z}_A$ is invariant under translation.
\begin{lem}\label{lem-traninv}
For each $g \in G$, we have \[  \mathcal{Z}_{A}=\mathcal{Z}_{A+g}.\]
\end{lem}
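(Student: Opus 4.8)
The plan is to compute the Fourier transform of the translated indicator function $1_{A+g}$ directly and compare it pointwise with that of $1_A$. Unwinding the definition given above, for any $h \in \widehat{G}$ one has
\[
\widehat{1_{A+g}}(h) = \sum_{x \in A+g} \chi_h(x) = \sum_{a \in A} \chi_h(a+g).
\]
The crucial structural input is that each character $\chi_h : G \to \mathbb{C}$ is a group homomorphism, so $\chi_h(a+g) = \chi_h(a)\chi_h(g)$. This lets me factor the constant $\chi_h(g)$, which does not depend on the summation variable $a$, out of the sum.

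Carrying this out, I would obtain
\[
\widehat{1_{A+g}}(h) = \chi_h(g) \sum_{a \in A} \chi_h(a) = \chi_h(g)\, \widehat{1_A}(h).
\]
The second step I would emphasize is that $\chi_h(g)$ is a root of unity: by the explicit formula for characters on finite abelian groups recalled above, $\chi_h(g)$ is an exponential $e^{2\pi i (\,\cdot\,)}$, hence has modulus one and in particular is nonzero. Therefore the two Fourier transforms vanish at exactly the same frequencies.

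Concluding, since $\chi_h(g) \neq 0$, the equation $\widehat{1_{A+g}}(h) = \chi_h(g)\,\widehat{1_A}(h)$ shows that $\widehat{1_{A+g}}(h) = 0$ if and only if $\widehat{1_A}(h) = 0$, for every $h \in \widehat{G}$. By the definition of the zero set, this is precisely the assertion $\mathcal{Z}_{A+g} = \mathcal{Z}_A$. There is no genuine obstacle here: the argument is a one-line computation resting on the homomorphism property of characters, and the only point requiring any care is to record that the nonzero scalar $\chi_h(g)$ cannot create or destroy a zero, which is exactly why the translation invariance holds.
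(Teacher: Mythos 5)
Your proof is correct and follows essentially the same route as the paper: both compute $\widehat{1_{A+g}}(h)=\chi_h(g)\,\widehat{1_A}(h)$ by reindexing the sum and using the homomorphism property of characters. Your version is marginally more complete in that you explicitly note $\chi_h(g)\neq 0$ to get both inclusions at once, whereas the paper writes out only one direction; this is a cosmetic difference, not a different argument.
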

\begin{proof}
Assume that $\xi \in \mathcal{Z}_A$ which is equivalent to $\widehat{1_{A}}(\xi)=\sum_{x\in A} \chi_{\xi}(x)=0.$
Hence, \[\widehat{1_{A+g}}(\xi)=\sum_{x\in A+g} \chi_{\xi}(x)=\sum_{x\in A} \chi_{\xi}(x+g)=\chi_{\xi}(g)\sum_{x\in A} \chi_{\xi}(x)=0.\]

\end{proof}

The following lemma states that the set of zeros of the Fourier transform of the indicator function of
a set is invariant  under special multiplication.

\begin{lem}[\cite{Z2022}]\label{unit}
If $g\in \mathcal{Z}_A $, then  $rg \in \mathcal{Z}_A $ for any integer $r$  with $\gcd(r,|G|)=1$.
\end{lem}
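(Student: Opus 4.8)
The plan is to reduce the statement to the action of the Galois group of a cyclotomic field. Write $N = |G|$ and $\omega = e^{2\pi i/N}$, so that every value $\chi_g(x)$ with $g \in \widehat{G}$ and $x \in G$ is an $N$-th root of unity and hence lies in $\Q(\omega)$. The starting observation is the elementary identity
$$\chi_{rg}(x) = \chi_g(x)^r \qquad (x \in G),$$
which follows directly from the explicit formula $\chi_g(x) = e^{2\pi i \sum_j x_j g_j/n_j}$, since multiplying $g$ by $r$ inside the exponent is the same as raising the whole value to the $r$-th power. Consequently,
$$\widehat{1_A}(rg) = \sum_{x \in A} \chi_{rg}(x) = \sum_{x \in A} \chi_g(x)^r.$$

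First I would invoke cyclotomic Galois theory: since $\gcd(r, N) = 1$, the assignment $\omega \mapsto \omega^r$ extends to a field automorphism $\sigma_r \in \mathrm{Gal}(\Q(\omega)/\Q)$ under the standard identification $\mathrm{Gal}(\Q(\omega)/\Q) \cong (\Z/N\Z)^*$. Because $\sigma_r$ is a ring homomorphism fixing $\Q$ and sends each $N$-th root of unity $\zeta$ to $\zeta^r$, applying it termwise gives
$$\sigma_r\bigl(\widehat{1_A}(g)\bigr) = \sum_{x \in A} \sigma_r\bigl(\chi_g(x)\bigr) = \sum_{x \in A} \chi_g(x)^r = \widehat{1_A}(rg).$$
Now the hypothesis $g \in \mathcal{Z}_A$ means $\widehat{1_A}(g) = 0$, and since $\sigma_r$ is a field automorphism it fixes $0$; therefore $\widehat{1_A}(rg) = \sigma_r(0) = 0$, that is, $rg \in \mathcal{Z}_A$, which is exactly the claim.

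There is essentially no deep obstacle here; the argument is a one-line Galois descent once the bookkeeping is in place. The only points requiring care are: (i) confirming that $\gcd(r, N) = 1$ really does yield a well-defined automorphism, which is immediate from $\gcd(r,|G|)=1$ and $N=|G|$; and (ii) checking that $\sigma_r$ acts on each summand $\chi_g(x)$ as the $r$-th power map. For the latter it is cleanest to note that each $\chi_g(x)$ is literally a power $\omega^k$ of $\omega$, so $\sigma_r(\omega^k) = (\omega^r)^k = (\omega^k)^r$; this avoids any ambiguity about how $\sigma_r$ treats arbitrary roots of unity. An alternative, Galois-free route would use the fact that a vanishing $\Z$-linear combination of $N$-th roots of unity stays vanishing under the substitution $\omega \mapsto \omega^r$ for $r$ coprime to $N$, but this is merely the same statement repackaged. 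I would present the Galois version, since it is both the shortest and the most transparent.
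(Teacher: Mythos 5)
Your proof is correct and complete: the identity $\chi_{rg}(x)=\chi_g(x)^r$, the fact that each character value lies in $\Q(\omega)$ with $\omega=e^{2\pi i/|G|}$, and the application of the automorphism $\sigma_r\in\mathrm{Gal}(\Q(\omega)/\Q)$ together give exactly the claim. The paper itself offers no proof of this lemma (it is quoted from \cite{Z2022}), but the argument you give is precisely the standard one used there and throughout this literature (compare Lemma~\ref{lem5.1}(1), which is the cyclic-group version), so there is nothing to add.
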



 Let $G$ be  the product of  finite abelian groups $H, S$. Let  $A\subset G=H\times S$. For each $s\in S$, define
 \[ A_s=\big\{h\in H: (h,s)\in A\big\}.\]
\begin{lem}\label{lem:periodic}
If $(h, s) \in \mathcal{Z}_A$ for each
$s \in  S$, then  $h\in  \mathcal{Z}_{A_s}$ for each
$s \in  S$.
 \end{lem}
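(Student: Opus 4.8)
The plan is to expand the Fourier coefficient $\widehat{1_A}(h,s)$ using the product structure of the characters on $G=H\times S$ and to recognize the resulting identity as a statement about the Fourier transform on the single factor $S$. First I would fix $h\in H$ satisfying the hypothesis and, for each $s'\in S$, abbreviate $c_{s'}:=\widehat{1_{A_{s'}}}(h)$. Splitting the sum defining $\widehat{1_A}(h,s)$ according to the second coordinate of the elements of $A$ and using the factorization $\chi_{(h,s)}(x_1,x_2)=\chi_h(x_1)\chi_s(x_2)$, I would obtain, for every $s\in S$,
\[
\widehat{1_A}(h,s)=\sum_{s'\in S}\chi_s(s')\,\Big(\sum_{x_1\in A_{s'}}\chi_h(x_1)\Big)=\sum_{s'\in S}\chi_s(s')\,c_{s'}.
\]

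The hypothesis $(h,s)\in\mathcal{Z}_A$ for all $s\in S$ then reads $\sum_{s'\in S}\chi_s(s')\,c_{s'}=0$ for every $s\in S$; that is, the function $S\to\mathbb{C}$, $s'\mapsto c_{s'}$, has identically vanishing Fourier transform on the group $S$. Since the Fourier transform on a finite abelian group is an invertible linear map (equivalently, the character table $(\chi_s(s'))_{s,s'\in S}$ is nonsingular by the orthogonality relations), this forces $c_{s'}=0$ for every $s'\in S$, i.e. $h\in\mathcal{Z}_{A_{s'}}$ for every $s'\in S$, which is the desired conclusion.

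I expect no serious obstacle here: the argument is a direct computation combined with the injectivity of the finite Fourier transform. The only point requiring a little care is the bookkeeping in the first step, namely correctly regrouping the double sum over $A$ by its fibres $A_{s'}$ and factoring the character, so that the coefficient matrix appearing is exactly the character table of $S$ and its invertibility can be invoked cleanly.
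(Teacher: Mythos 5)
Your argument is correct and is essentially identical to the paper's proof: both decompose $A$ into its fibres $A_{s'}\times\{s'\}$, reduce the hypothesis to a linear system whose coefficient matrix is the character table (Fourier matrix) of $S$, and conclude from its nonsingularity that each $\widehat{1_{A_{s'}}}(h)=0$. No issues.
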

 \begin{proof}
Note that \[A=\bigcup_{s\in S}A_s\times\{s\}.\]
Since for each $s \in S$,  $(h,s)\in \mathcal{Z}_A$, it follows that
\begin{align} \label{eq:zero1}
\sum_{(x,y)\in A} \chi_{(h,s)}(x,y)=\sum_{s\in S}\chi_{s}(y) \sum_{x\in A_s} \chi_{h}(x)=0.
\end{align}
Let $X_s= \sum_{x\in A_s} \chi_{h}(x)$ and   let $\mathbf{X}=(X_s)_{s\in S}$ be the row vector with elements $X_s$.
Let   $M_S=(\chi_{\alpha}(s))_{\alpha,s \in S}$  be the Fourier matrix  of $S$. By Equality \eqref{eq:zero1}, we obtain the following system of linear equations

\[
M_S \cdot  \mathbf{X}
=
\begin{pmatrix}
0 \\
0 \\
\vdots \\
0
\end{pmatrix}.
\]
The coefficient matrix $M_S$ is of full rank, therefore $X_s=0$ for each $ s\in S$, which implies that
\[\widehat{1_{A_s}}(h)= \sum_{x\in A_s} \chi_{h}(x)=X_s=0.\]
 \end{proof}
\subsection{Basic properties of  tiles.}
In this section, we give some properties of  tiles in finite abelian groups. Considering a tiling pair, we  have the following equivalent characterization.



\begin{lem}[\cite{S20}]
Let $\Omega, T \subseteq G $. Then the following statements are equivalent:
\begin{enumerate} [{\rm(1)}]
  \item $(\Omega, T)$ is a tiling pair.
  \item $(T, \Omega)$ is a tiling pair.
  \item $(\Omega+x, T+y)$ is a tiling pair for any $x, y\in G$.
  \item $|\Omega|\cdot|T|=|G|$ and $(\Omega-\Omega)\cap(T-T)= \{0\}.$
  \item $|\Omega|\cdot|T|=|G|$ and $\mathcal{Z}_\Omega\cup\mathcal{Z}_T= G\setminus\{0\}.$
\end{enumerate}
\label{2.6}
\end{lem}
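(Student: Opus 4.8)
The plan is to reduce every clause to the single statement that, in a finite group, $(\Omega, T)$ is a tiling pair precisely when the addition map $\Omega \times T \to G$, $(\omega, t) \mapsto \omega + t$, is a bijection; equivalently, when every $x \in G$ has a unique representation $x = \omega + t$ with $\omega \in \Omega$, $t \in T$. Granting this, the equivalences (1)$\Leftrightarrow$(2) and (1)$\Leftrightarrow$(3) require no computation. Uniqueness of the representation $x = \omega + t$ is manifestly symmetric in the two factors, which gives (1)$\Leftrightarrow$(2); and passing from $(\Omega, T)$ to $(\Omega + x, T + y)$ merely shifts every sum $\omega + t$ by the fixed element $x + y$, a bijection of $G$, so the tiling property is preserved in both directions, giving (1)$\Leftrightarrow$(3).

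For (1)$\Leftrightarrow$(4) I would split the bijectivity of the addition map into injectivity and a cardinality count. Injectivity fails exactly when $\omega_1 + t_1 = \omega_2 + t_2$ for some $(\omega_1, t_1) \neq (\omega_2, t_2)$, i.e. when $\omega_1 - \omega_2 = t_2 - t_1$ is a nonzero element of $(\Omega - \Omega) \cap (T - T)$; hence injectivity is equivalent to $(\Omega - \Omega) \cap (T - T) = \{0\}$. Since the map runs between finite sets of sizes $|\Omega| \cdot |T|$ and $|G|$, it is a bijection iff it is injective \emph{and} $|\Omega| \cdot |T| = |G|$, which is precisely condition (4).

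For the Fourier characterisation I would pass through the transform. Tiling says $1_\Omega * 1_T \equiv 1 = 1_G$, and by the convolution theorem (valid for the paper's convention, since $\chi_g$ is a homomorphism) this is equivalent to $\widehat{1_\Omega}(\xi) \cdot \widehat{1_T}(\xi) = \widehat{1_G}(\xi)$ for every $\xi \in \widehat{G}$. Now $\widehat{1_G}(\xi) = |G|$ for $\xi = 0$ and $0$ otherwise by orthogonality of characters, while $\widehat{1_\Omega}(0) = |\Omega|$ and $\widehat{1_T}(0) = |T|$; so the equation at $\xi = 0$ reads exactly $|\Omega| \cdot |T| = |G|$, and at each $\xi \neq 0$ it reads $\widehat{1_\Omega}(\xi)\, \widehat{1_T}(\xi) = 0$, i.e. $\xi \in \mathcal{Z}_\Omega \cup \mathcal{Z}_T$. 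Since $\widehat{1_\Omega}(0) = |\Omega| > 0$ forces $0 \notin \mathcal{Z}_\Omega \cup \mathcal{Z}_T$, the collection of these conditions over all $\xi \neq 0$ is exactly $\mathcal{Z}_\Omega \cup \mathcal{Z}_T = G \setminus \{0\}$, yielding (5). The point needing the most care, and the one I expect to be the main obstacle, is the converse: from (5) together with the count I must recover the pointwise identity $\widehat{1_\Omega} \cdot \widehat{1_T} = \widehat{1_G}$ at every frequency and then invoke injectivity of the Fourier transform (Fourier inversion) to conclude $1_\Omega * 1_T = 1_G$, rather than merely that the two sides share supports; matching the actual value at $\xi = 0$ is where the cardinality hypothesis $|\Omega| \cdot |T| = |G|$ is indispensable.
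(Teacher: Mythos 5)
Your argument is correct and complete: the reduction of tiling to bijectivity of the addition map $\Omega\times T\to G$ cleanly yields (1)$\Leftrightarrow$(2)$\Leftrightarrow$(3)$\Leftrightarrow$(4), and the Fourier direction correctly identifies (and resolves, via Fourier inversion plus the cardinality count at the zero frequency) the only delicate point in (1)$\Leftrightarrow$(5). The paper itself gives no proof of this lemma --- it is quoted from the cited reference \cite{S20} --- and your proof is the standard one for finite abelian groups, so there is nothing to reconcile.
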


\begin{lem}[{\cite[Theorem 3.17]{SS09book}}\label{lemma-replace}]
Assume that   $(\Omega,T)$  is a tiling pair of  a finite abelian group $G$.
For any integer $k$  with $(k,|T|)$=1,   $(\Omega,kT)$ is also a tiling pair.
\end{lem}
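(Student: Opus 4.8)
The plan is to verify the tiling condition in the Fourier form given by Lemma \ref{2.6}(5): it suffices to show that $|\Omega|\cdot|kT|=|G|$ and that $\mathcal{Z}_\Omega\cup\mathcal{Z}_{kT}=\widehat{G}\setminus\{0\}$ (identifying $\widehat{G}$ with $G$ as the paper does). The engine of the whole argument is the elementary identity $\widehat{1_{kT}}(\xi)=\widehat{1_T}(k\xi)$, valid whenever the map $t\mapsto kt$ is injective on $T$; indeed $\chi_\xi(kt)=\chi_\xi(t)^k=\chi_{k\xi}(t)$, so $\sum_{s\in kT}\chi_\xi(s)=\sum_{t\in T}\chi_{k\xi}(t)$. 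Once injectivity is known it yields $|kT|=|T|$, hence the cardinality condition $|\Omega|\cdot|kT|=|\Omega|\cdot|T|=|G|$, and it identifies the new zero set as $\mathcal{Z}_{kT}=\{\xi:k\xi\in\mathcal{Z}_T\}$.

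To establish injectivity I would argue by contradiction: if $kt_1=kt_2$ with $t_1\neq t_2$, then $d:=t_1-t_2$ is a nonzero element of $T-T$ whose order divides $k$, so every prime factor of $\mathrm{ord}(d)$ divides $k$ and is therefore coprime to $|T|$. The point to prove is that a tiling complement cannot contain such a difference: since the primes dividing $\mathrm{ord}(d)$ do not divide $|T|$, they carry the full power of $|G|$ inside $|\Omega|$, and one expects $d$ to lie in $\Omega-\Omega$ as well, contradicting $(\Omega-\Omega)\cap(T-T)=\{0\}$ from Lemma \ref{2.6}(4).

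For the zero-set condition, fix $\xi\neq 0$. If $\xi\in\mathcal{Z}_\Omega$ we are done, so assume $\xi\notin\mathcal{Z}_\Omega$; then $\xi\in\mathcal{Z}_T$ by the original tiling, and we must show $k\xi\in\mathcal{Z}_T$. Writing $k$ as a product of primes none of which divides $|T|$, and using $|kT|=|T|$ to keep the coprimality valid at each stage, I would reduce to the case where $k=\ell$ is a single prime with $\ell\nmid|T|$. If $\ell\nmid\mathrm{ord}(\xi)$, then $\gcd(\ell,\mathrm{ord}(\xi))=1$ and Lemma \ref{unit} (Galois conjugation of the cyclotomic value $\widehat{1_T}(\xi)$) immediately gives $\ell\xi\in\mathcal{Z}_T$.

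The main obstacle is the remaining case $\ell\mid\mathrm{ord}(\xi)$, where $\ell$ ramifies in $\mathbb{Z}[\zeta_{\mathrm{ord}(\xi)}]$ and the Galois argument of Lemma \ref{unit} is unavailable. Here I would exploit the Frobenius congruence $\widehat{1_A}(\xi)^\ell\equiv\widehat{1_A}(\ell\xi)\pmod{\mathfrak{l}}$ (for any set $A$ and any prime $\mathfrak{l}$ of $\mathbb{Z}[\zeta_{\mathrm{ord}(\xi)}]$ above $\ell$), applied to $A=\Omega$: if $\ell\xi\neq 0$ and $\ell\xi\notin\mathcal{Z}_T$, then $\ell\xi\in\mathcal{Z}_\Omega$ by the original tiling, whence $\widehat{1_\Omega}(\xi)^\ell\equiv\widehat{1_\Omega}(\ell\xi)=0\pmod{\mathfrak{l}}$ and so $\widehat{1_\Omega}(\xi)\in\mathfrak{l}$. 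The task is to turn this divisibility into a genuine contradiction with $\xi\notin\mathcal{Z}_\Omega$. This is exactly the structural content of the cited theorem: decomposing $\xi$ into its $\ell$-part and prime-to-$\ell$ part and reducing to characters of prime-power order—where $\Phi_{\ell^a}(1)=\ell$ pins down on which factor ($\Omega$ or $T$) the cyclotomic polynomial must divide—one sees that the characters of order divisible by $\ell$ that escape $\mathcal{Z}_T$ are forced into $\mathcal{Z}_\Omega$, and symmetrically that $\mathcal{Z}_T$ is stable under $\xi\mapsto\ell\xi$. I expect this ramified case, together with the injectivity step (which rests on the same phenomenon), to be the hard part; everything else is the bookkeeping of Lemma \ref{2.6} and the Galois invariance of Lemma \ref{unit}.
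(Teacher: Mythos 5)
The paper offers no proof of this lemma---it is quoted verbatim from Szab\'o--Sands \cite[Theorem 3.17]{SS09book}---so your argument has to stand on its own, and it does not: there are two genuine gaps, both at exactly the places you yourself flag as the hard part. First, the injectivity step. To rule out $d=t_1-t_2\neq 0$ with $kd=0$ you propose to show that any nonzero $d$ whose order is coprime to $|T|$ lies in $\Omega-\Omega$. That intermediate claim is false: in $\Z_{12}$ with $\Omega=\{0,1,2\}$ and $T=\{0,3,6,9\}$, the element $d=4$ has order $3$, coprime to $|T|=4$, yet $4\notin\Omega-\Omega$. (It is also not in $T-T$, so the lemma is safe, but it shows your route---deducing $d\in\Omega-\Omega$ from the order condition alone---cannot work; the hypothesis $d\in T-T$ would have to enter essentially, and you do not say how.) Second, the case $\ell\mid\mathrm{ord}(\xi)$ is deferred rather than solved: the Frobenius congruence only yields $\widehat{1_\Omega}(\xi)\in\mathfrak{l}$, and a nonzero algebraic integer can perfectly well lie in a prime ideal, so no contradiction with $\xi\notin\mathcal{Z}_\Omega$ follows. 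Your closing appeal to cyclotomic divisibility ($\Phi_{\ell^a}(1)=\ell$, CM-type bookkeeping) is moreover a cyclic-group tool, while the lemma concerns an arbitrary finite abelian $G$.

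Both difficulties disappear if you argue in the group ring rather than on the Fourier side, which is the standard (Tijdeman--Sands) proof. Tiling means $\Omega(x)T(x)=G(x)$ in $\Z[G]$. For a prime $\ell\nmid|T|$ the freshman's dream gives $T(x^\ell)\equiv T(x)^\ell \pmod{\ell}$, hence
$\Omega(x)T(x^\ell)\equiv \Omega(x)T(x)\cdot T(x)^{\ell-1}=G(x)T(x)^{\ell-1}=|T|^{\ell-1}G(x)\equiv G(x)\pmod{\ell}$.
Thus every coefficient of the nonnegative element $\Omega(x)T(x^\ell)$ is $\equiv 1\pmod{\ell}$, hence $\geq 1$; since the coefficients sum to $|\Omega|\,|T|=|G|$, they all equal $1$. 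This single computation shows at once that $\ell T$ consists of $|T|$ distinct elements and that $(\Omega,\ell T)$ is a tiling pair; iterating over the prime factors of $k$ (coprimality with $|T|$ being preserved at each step) completes the proof.
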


\subsection{$\Z$-module generated by $p^n$-th roots of unity.}

Let $m\geq2$ be an integer and $\omega_m=e^{2\pi i/m}$, which is a primitive
$m$-th root of unity. Denote
\begin{equation*}
\mathcal{M}_m=\Big\{(a_0, a_1, \cdots, a_{m-1})\in\mathbb{Z}^m:\ \sum_{j=0}^{m-1}a_j\omega_m^j=0\Big\},
\end{equation*}
which form a $\mathbb{Z}$-module. In the following we assume that $m=p^n$ is a prime power.

\begin{lem}[\cite{FFS16}]\label{lem1}
Let $(a_0, a_1, \cdots, a_{p^n-1})\in\mathcal{M}_{p^n}$.
Then for any integer $0\leq k\leq p^{n-1}-1$, we have $a_k=a_{k+jp^{n-1}}$ for all $j=0, 1, \cdots, p-1$.
\end{lem}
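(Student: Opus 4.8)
The plan is to prove Lemma~\ref{lem1}, which asserts that any integer vector $(a_0,\dots,a_{p^n-1})$ satisfying the vanishing relation $\sum_{j=0}^{p^n-1}a_j\omega_{p^n}^j=0$ must be constant along each residue class modulo $p^{n-1}$; that is, the coefficient depends only on $k \bmod p^{n-1}$. The natural tool is the structure of the cyclotomic field $\Q(\omega_{p^n})$, whose degree over $\Q$ is $\varphi(p^n)=p^{n-1}(p-1)$, and whose minimal polynomial for $\omega_{p^n}$ is the cyclotomic polynomial $\Phi_{p^n}(x)=1+x^{p^{n-1}}+x^{2p^{n-1}}+\cdots+x^{(p-1)p^{n-1}}$.

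First I would regard the given sum as the evaluation at $x=\omega_{p^n}$ of the integer polynomial $A(x)=\sum_{j=0}^{p^n-1}a_j x^j$, which has degree less than $p^n$. Since $\omega_{p^n}$ is a root of $A(x)$ and $\Phi_{p^n}(x)$ is its minimal polynomial over $\Q$, we have $\Phi_{p^n}(x)\mid A(x)$ in $\Q[x]$, and by Gauss's lemma (as $\Phi_{p^n}$ is monic with integer coefficients) actually $A(x)=\Phi_{p^n}(x)\,B(x)$ for some $B(x)\in\Z[x]$. Because $\deg A<p^n$ and $\deg\Phi_{p^n}=(p-1)p^{n-1}$, the quotient $B(x)$ has degree strictly less than $p^n-(p-1)p^{n-1}=p^{n-1}$, so I may write $B(x)=\sum_{r=0}^{p^{n-1}-1}b_r x^r$ with $b_r\in\Z$.

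The key computation is then to expand the product $A(x)=\Phi_{p^n}(x)B(x)=\bigl(\sum_{i=0}^{p-1}x^{ip^{n-1}}\bigr)\bigl(\sum_{r=0}^{p^{n-1}-1}b_r x^r\bigr)$ and read off the coefficient of $x^k$. Writing a general exponent as $k=r+ip^{n-1}$ with $0\le r<p^{n-1}$ and $0\le i<p$, the product contributes $b_r$ to the coefficient of $x^{r+ip^{n-1}}$, and since $r$ ranges over $0,\dots,p^{n-1}-1$ and $i$ over $0,\dots,p-1$, each exponent $k\in\{0,\dots,p^n-1\}$ is hit exactly once with no overlap. Hence $a_{r+ip^{n-1}}=b_r$ for every $i$, which is precisely the assertion $a_k=a_{k+jp^{n-1}}$ for all $j$, where $k=r$ is the residue of the index modulo $p^{n-1}$.

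The main obstacle, and the only point requiring genuine care, is justifying the integrality of the quotient $B(x)$ and the degree bound on it; everything after that is bookkeeping in the expansion. The integrality follows cleanly from Gauss's lemma once one invokes that $\Phi_{p^n}$ is the minimal polynomial of $\omega_{p^n}$ (equivalently, that $\Phi_{p^n}$ is irreducible over $\Q$), so the real content is this classical irreducibility fact, which I would cite rather than reprove. I would then present the residue-class reindexing $k\mapsto(r,i)$ explicitly to make transparent that the coefficients of $A$ are constant on each class modulo $p^{n-1}$, completing the proof.
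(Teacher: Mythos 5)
The paper states this lemma without proof, importing it from \cite{FFS16}, so there is no in-paper argument to compare against; your proof is correct and is precisely the standard argument used in that reference: since $\Phi_{p^n}(x)=1+x^{p^{n-1}}+\cdots+x^{(p-1)p^{n-1}}$ is the minimal polynomial of $\omega_{p^n}$, one gets $A(x)=\Phi_{p^n}(x)B(x)$ with $B\in\Z[x]$ of degree below $p^{n-1}$, and the base-$p^{n-1}$ reindexing of exponents shows the coefficients of $A$ depend only on the index modulo $p^{n-1}$. The degree bound, the integrality of $B$ (which follows even more directly from division by a monic integer polynomial than from Gauss's lemma), and the bijectivity of the map $(r,i)\mapsto r+ip^{n-1}$ are all handled correctly.
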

We will use Lemma \ref{lem1} in the following two particular forms. The first one is an immediate consequence.

\begin{lem}[\cite{FFS16}]\label{2}
Let $(b_0,b_1,\cdots ,b_{p-1})\in\mathcal{M}_p$. Then
subject to a permutation of $(b_0, b_1, \cdots, b_{p-1})$, there exist $0\leq r\leq p^{n-1}-1$, such that
$$b_j\equiv r+jp^{n-1}\bmod p^n$$ for all $j=0, 1, \cdots, p-1$.
\end{lem}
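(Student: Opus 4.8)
The plan is to deduce this from Lemma \ref{lem1} by replacing the list $(b_0,\dots,b_{p-1})$ with its multiplicity vector. First I would record the hypothesis in the usable form $\sum_{j=0}^{p-1}\omega_{p^n}^{b_j}=0$, and then set $a_k=\#\{\,j:\ b_j\equiv k \pmod{p^n}\,\}$ for $0\le k\le p^n-1$. Collecting the roots of unity according to their exponent rewrites the hypothesis as $\sum_{k=0}^{p^n-1}a_k\omega_{p^n}^k=0$, so that $(a_0,\dots,a_{p^n-1})\in\mathcal{M}_{p^n}$. The two features of this vector that drive the whole argument are that its entries are \emph{nonnegative} integers and that $\sum_{k=0}^{p^n-1}a_k=p$ (since there are exactly $p$ terms $b_j$).

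Next I would apply Lemma \ref{lem1} to $(a_0,\dots,a_{p^n-1})$: for each $0\le k\le p^{n-1}-1$ the entries $a_k,a_{k+p^{n-1}},\dots,a_{k+(p-1)p^{n-1}}$ all coincide; write $c_k$ for this common value. Summing the whole vector coset by coset then gives $p=\sum_{k=0}^{p^n-1}a_k=p\sum_{k=0}^{p^{n-1}-1}c_k$, and hence $\sum_{k=0}^{p^{n-1}-1}c_k=1$.

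The decisive step is purely combinatorial: the $c_k$ are nonnegative integers summing to $1$, so exactly one of them, say $c_r$ with $0\le r\le p^{n-1}-1$, equals $1$ while all the others vanish. Propagating this back through Lemma \ref{lem1} yields $a_{r+jp^{n-1}}=1$ for $j=0,1,\dots,p-1$ and $a_k=0$ for every other index. In particular all multiplicities are $0$ or $1$, so the $b_j$ are pairwise distinct and, as a set, $\{b_0,\dots,b_{p-1}\}=\{\,r+jp^{n-1}\bmod p^n:\ j=0,\dots,p-1\,\}$; relabelling the list (the permitted permutation) gives $b_j\equiv r+jp^{n-1}\pmod{p^n}$, as claimed.

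I do not expect a real obstacle, since the paper itself flags this as \emph{an immediate consequence} of Lemma \ref{lem1}. The single point that demands care, and the place where the argument would collapse if mishandled, is the nonnegativity of the $a_k$: it is precisely because the $c_k$ are nonnegative integers (not arbitrary integers) summing to $1$ that a single coset must be covered, each exponent exactly once. If one forgot that the $b_j$ are genuine residues and only used $(a_k)\in\mathcal{M}_{p^n}$ with $\sum_k a_k=p$, one could conclude $\sum_k c_k=1$ but nothing about the structure of the list.
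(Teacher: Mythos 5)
Your proof is correct and is exactly the deduction the paper has in mind: the paper gives no written argument for this lemma, merely calling it an immediate consequence of Lemma \ref{lem1} (with a citation to the source), and your passage to the multiplicity vector $(a_k)$ together with the nonnegativity observation is the standard way to make that immediacy precise. Note also that you have, correctly, read the hypothesis ``$(b_0,\dots,b_{p-1})\in\mathcal{M}_p$'' as meaning $\sum_{j=0}^{p-1}\omega_{p^n}^{b_j}=0$, which is the only reading compatible with the conclusion.
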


\begin{lem}[\cite{FFS16}]\label{fenzhi}
 Let $C$ be a finite subset of $\mathbb{Z}$. If $\sum_{c\in C}e^{2\pi ic/{p^n}}=0$, then $p\mid |C|$
 and $C$ can be decomposed into $|C |/p$ disjoint subsets $C_1,C_2,\cdots, C_{|C| /p}$, such that
 each subset consists of $p$ points and $$\sum_{c\in C_j}e^{2\pi ic/{p^n}}=0.$$
\end{lem}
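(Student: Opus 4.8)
The plan is to reduce the single vanishing-sum hypothesis to a statement about the $\mathbb{Z}$-module $\mathcal{M}_{p^n}$ and then feed it to Lemma \ref{lem1}. First I would sort the elements of $C$ according to their residues modulo $p^n$: for each $r\in\{0,1,\dots,p^n-1\}$ let
\[
a_r=\big|\{c\in C:\ c\equiv r \wmod{p^n}\}\big|.
\]
Since $e^{2\pi i c/p^n}=\omega_{p^n}^{\,r}$ whenever $c\equiv r\pmod{p^n}$, the assumption $\sum_{c\in C}e^{2\pi i c/p^n}=0$ rewrites as $\sum_{r=0}^{p^n-1}a_r\,\omega_{p^n}^{\,r}=0$, which is exactly the statement that $(a_0,a_1,\dots,a_{p^n-1})\in\mathcal{M}_{p^n}$.

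Next I would apply Lemma \ref{lem1} to this integer vector: for every $0\le k\le p^{n-1}-1$ it yields $a_k=a_{k+jp^{n-1}}$ for all $j=0,1,\dots,p-1$. Hence the $p^n$ residue classes organize into $p^{n-1}$ packets, indexed by $k\in\{0,\dots,p^{n-1}-1\}$, so that the $p$ classes $k,\,k+p^{n-1},\,\dots,\,k+(p-1)p^{n-1}$ forming the $k$-th packet each contain exactly $a_k$ elements of $C$. Summing over all classes gives $|C|=\sum_r a_r=p\sum_{k=0}^{p^{n-1}-1}a_k$, which already establishes $p\mid |C|$ and shows that the number of blocks to be constructed is $|C|/p=\sum_{k}a_k$.

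The final step is the combinatorial grouping, which rests on the identity $\omega_{p^n}^{\,k+jp^{n-1}}=\omega_{p^n}^{\,k}\,\omega_p^{\,j}$ together with $\sum_{j=0}^{p-1}\omega_p^{\,j}=0$. Inside the $k$-th packet I list, for each $j$, the $a_k$ elements of $C$ lying in the class $k+jp^{n-1}$ as $c^{(j)}_1,\dots,c^{(j)}_{a_k}$; because Lemma \ref{lem1} forces all $p$ classes of the packet to carry the same count $a_k$, I may form the blocks $C_{k,\ell}=\{c^{(0)}_\ell,c^{(1)}_\ell,\dots,c^{(p-1)}_\ell\}$ for $\ell=1,\dots,a_k$. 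Each $C_{k,\ell}$ consists of exactly $p$ points, one from each class of the packet, and satisfies
\[
\sum_{c\in C_{k,\ell}}e^{2\pi i c/p^n}=\sum_{j=0}^{p-1}\omega_{p^n}^{\,k+jp^{n-1}}=\omega_{p^n}^{\,k}\sum_{j=0}^{p-1}\omega_p^{\,j}=0.
\]
Letting $k$ and $\ell$ range over all admissible values produces a disjoint decomposition of $C$ into $|C|/p$ blocks of size $p$, each with vanishing exponential sum, as required.

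I do not expect a genuine obstacle here; the entire force of the argument is concentrated in Lemma \ref{lem1}, whose conclusion (equal counts within each packet) is precisely what permits the cross-class pairing in the last step. The one point worth flagging is that the pairing is not canonical: any bijection between the classes of a common packet works, so the decomposition is highly non-unique, and I would note this explicitly rather than pretend the blocks are determined by $C$.
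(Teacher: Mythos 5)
Your proof is correct and takes essentially the same approach the paper intends: the paper gives no proof of its own (the lemma is quoted from \cite{FFS16} and presented as a particular form of Lemma \ref{lem1}), and your reduction to the residue-class count vector in $\mathcal{M}_{p^n}$, followed by the equal-counts conclusion of Lemma \ref{lem1} and the one-element-per-class grouping within each packet, is precisely that derivation. The closing remark about non-uniqueness of the blocks is accurate but inessential.
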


\subsection{ $p$-homogeneity  of tiles in $\Z_{p^n}$}

Let $n$ be a positive integer. To any finite sequence $t_0 t_1 \cdots t_{n-1} \in \{0, 1, \dots, p-1\}^{n}$, we associate the integer
\[
c = c(t_0 t_1 \cdots t_{n-1}) = \sum_{i=0}^{n-1} t_i p^i \in \{0, 1, \dots, p^n - 1\}.
\]
This establishes a bijection between $\Z_{p^n}$ and $\{0, 1, \dots, p-1\}^{n}$, which we consider as a finite tree, denoted by ${\mathcal T}^{(n)}$ (see Figure~\ref{fig:1}).

The set of vertices of ${\mathcal T}^{(n)}$ is the disjoint union of the sets $\Z_{p^\gamma}$, for $0 \le \gamma \le n$. Each vertex, except the root, is identified with a
sequence $t_0 t_1 \cdots t_{\gamma-1}$ in $\Z_{p^\gamma}$, where $0 \le \gamma \le n$ and $t_i \in \{0, 1, \dots, p-1\}$. The set of edges consists of pairs $(x, y) \in \Z_{p^\gamma}
\times \Z_{p^{\gamma+1}}$ such that $x \equiv y \pmod{p^\gamma}$, where $0 \le \gamma \le n-1$. The points in $\Z_{p^\gamma}$ are corresponding to the vertices at level $\gamma$.
Each point $c$ of $\Z_{p^n}$ is identified with a boundary  vertex $\sum_{i=0}^{n-1} t_i p^i \in \{0, 1, \dots, p^n - 1\}$ which is located at level $n$.

Each subset $C \subset \Z_{p^n}$ determines a subtree of ${\mathcal T}^{(n)}$, denoted by ${\mathcal T}_C$, which consists of the paths from the root to the boundary points in $C$. For each $0 \le \gamma \le n$, we denote by
\[
C_{\bmod p^\gamma} := \big\{x \in \{0, 1, \dots, p^\gamma-1\} : \exists\ y \in C \text{ such that } x \equiv y \pmod{p^\gamma}\big\}
\]
the subset of $C$ modulo $p^\gamma$.

The set of vertices of ${\mathcal T}_C$ is the disjoint union of the sets $C_{\bmod p^\gamma}$, for $0 \le \gamma \le n$. The set of edges consists of pairs $(x, y) \in C_{\bmod p^\gamma} \times C_{\bmod p^{\gamma+1}}$ such that $x \equiv y \pmod{p^\gamma}$, where $0 \le \gamma \le n-1$.

For vertices $u \in C_{\bmod p^{\gamma+1}}$ and $s \in C_{\bmod p^\gamma}$, we call $s$ the  \emph{parent} of $u$ or $u$ a \emph{descendant} of $s$ if there exists an edge between $s$ and $u$.

Now, we proceed to construct a class of subtrees of ${\mathcal T}^{(n)}$. Let $I$ be a subset of $\{0, 1, \dots, n-1\}$, and let $J$ be its complement. Thus, $I$ and $J$ form a partition of $\{0, 1, \dots, n-1\}$, and either set may be empty.

We say a subtree ${\mathcal T}_C$ of ${\mathcal T}^{(n)}$ is of ${\mathcal T}_{I}$-form if its vertices satisfy the following conditions:

\begin{enumerate}
    \item If $i \in I$ and  $t_0 t_1\dots t_{i-1} $ is given, then $t_i$ can take any value in $\{0, 1, \dots, p-1\}$. In other words, every vertex in $C_{\bmod p^{i-1}}$
        has $p$ descendants.
    \item If $i \in J$ and $t_0 t_1 \dots t_{i-1}$ is given, we fix a value in $\{0, 1, \dots, p-1\}$ that $t_i$ must take. That is, $t_i$ takes only one value from $\{0, 1, \cdots, p-1\}$, which depends on $t_0 t_1 \dots t_{i-1}$. In other words, every vertex in $C_{\bmod p^{i-1}}$ has one descendant.
\end{enumerate}

Note that such a subtree depends not only on $I$ and $J$ but also on the specific values assigned to $t_i$ for $i \in J$. A ${\mathcal T}_{I}$-form tree is called a finite \emph{$p$-homogeneous tree}, see Figure~\ref{fig:7} for an example.

A set $C \subset \Z_{p^n}$ is said to be  \emph{$p$-homogeneous} subset of $\Z_{p^n}$ with  \emph{branched level set} $I$ if the corresponding tree ${\mathcal T}_C$ is $p$-homogeneous of form  ${\mathcal T}_{I}$.

\begin{figure}[h]
  	\centering
  	\includegraphics[width=0.7\linewidth]{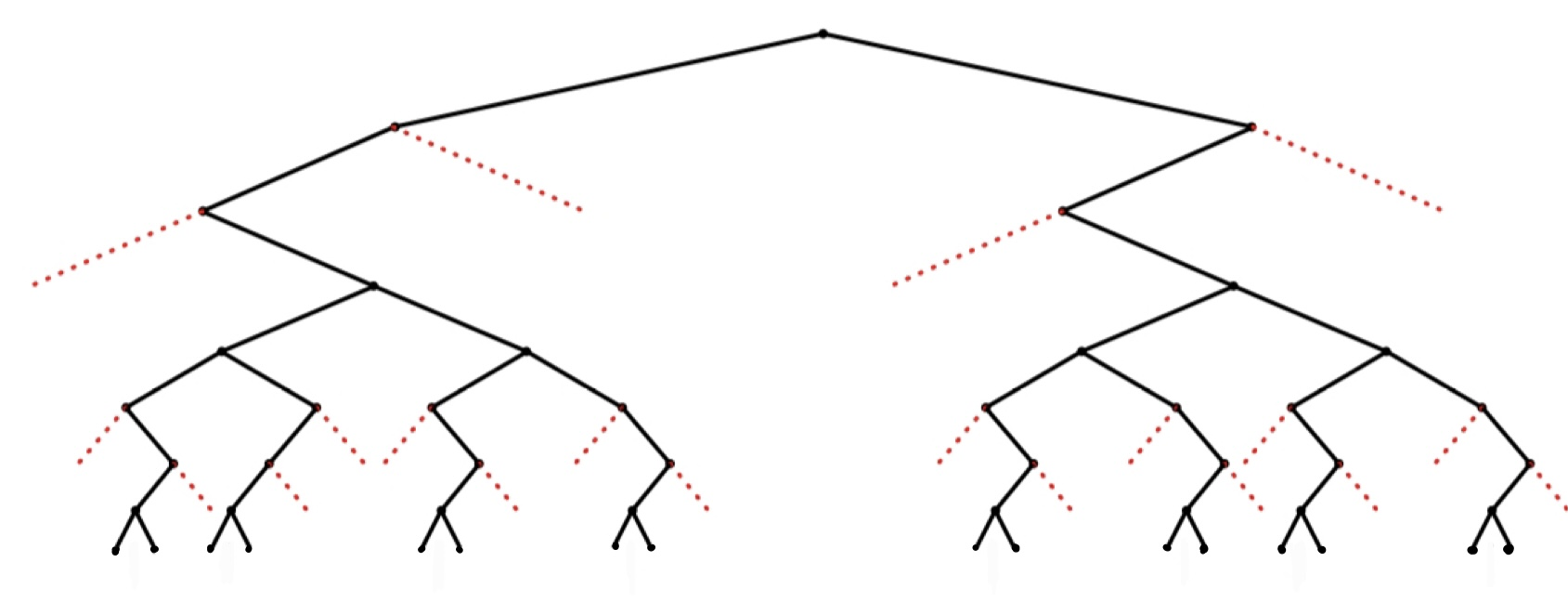}
  	\caption{For $p=2$, a $p$-{\it homogeneous} tree.}
  	\label{fig:7}
  \end{figure}

\begin{figure}[h]
  	\centering
  	\includegraphics[width=0.9\linewidth]{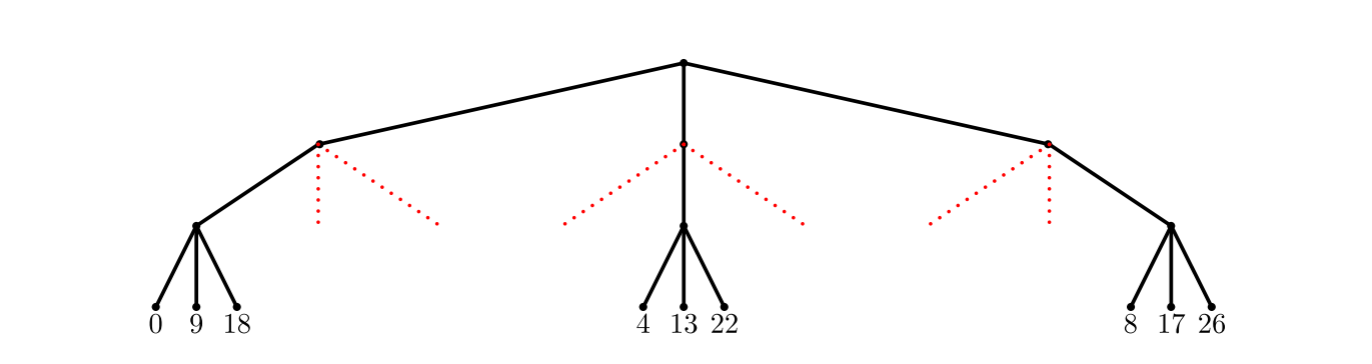}
  	\caption{Consider set $\{0, 4, 8, 9, 13, 17, 18, 22, 26\}$ as a $p$-homogeneous tree.}
  	\label{fig:3}
  \end{figure}

\begin{exam}
  Let \( p = 3 \), \( n = 3 \), \( C = \{0, 4, 8, 9, 13, 17, 18, 22, 26\} \) (see figure \ref{fig:3}). We have:
\begin{align*}
0 &= 0 \cdot 1 + 0 \cdot 3 + 0 \cdot 3^2, \quad
4 = 1 \cdot 1 + 1 \cdot 3 + 0 \cdot 3^2, \quad
8 = 2 \cdot 1 + 2 \cdot 3 + 0 \cdot 3^2, \\
9 &= 0 \cdot 1 + 0 \cdot 3 + 1 \cdot 3^2, \quad
13 = 1 \cdot 1 + 1 \cdot 3 + 1 \cdot 3^2, \quad
17 = 2 \cdot 1 + 2 \cdot 3 + 1 \cdot 3^2, \\
18 &= 0 \cdot 1 + 0 \cdot 3 + 2 \cdot 3^2, \quad
22 = 1 \cdot 1 + 1 \cdot 3 + 2 \cdot 3^2, \quad
26 = 2 \cdot 1 + 2 \cdot 3 + 2 \cdot 3^2.
\end{align*}
\end{exam}

A criterion for a subset $C \subset \Z_{p^n}$ to be $p$-homogeneous is given in \cite{FFS16}.
\begin{lem}[{\cite[Theorem 2.9]{FFS16}}] \label{thm5.4}
Let $n$ be a positive integer, and let $C \subset \Z_{p^n}$ be a multiset. Suppose that
\begin{enumerate}[{\rm(1)}]
    \item $|C| \le p^k$ for some integer $k$ with $1 \le k \le n$;
    \item there exist $k$ integers $1 \le j_1 < j_2 < \dots < j_k \le n$ such that
        \[
        \sum_{c \in C} e^{2\pi i c p^{-j_t}} = 0 \quad \text{for all } 1 \le t \le k.
        \]
\end{enumerate}
Then $|C| = p^k$ and $C$ is $p$-homogeneous. Moreover, the tree ${\mathcal T}_C$ is a ${\mathcal T}_{I}$-form tree with branched level set  $I = \{j_1-1, j_2-1, \dots, j_k-1\}$.
\end{lem}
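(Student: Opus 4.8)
The plan is to argue by induction on the number $k$ of vanishing conditions, at each stage stripping off the \emph{largest} level $j_k$ and projecting the problem down into $\Z_{p^{j_k-1}}$. The base case is $k=0$: a nonempty multiset $C$ with no conditions and $|C|\le p^0=1$ has exactly one element, so $\mathcal{T}_C$ is a single path, i.e. a $\mathcal{T}_{I}$-form tree with $I=\varnothing$. (Throughout I take $C\neq\varnothing$, which is implicit in the conclusion $|C|=p^k$.)

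For the inductive step I would first apply the branching lemma (Lemma \ref{fenzhi}) to the top condition $\sum_{c\in C}e^{2\pi i c p^{-j_k}}=0$, reading the summands as $p^{j_k}$-th roots of unity. This decomposes $C$ into $|C|/p$ disjoint blocks $C_1,\dots,C_{|C|/p}$, each of size $p$ with $\sum_{c\in C_l}e^{2\pi i c p^{-j_k}}=0$. Lemma \ref{2}, applied with $j_k$ in place of $n$, then forces the $p$ elements of each block to be congruent modulo $p^{j_k-1}$ while their digit at position $j_k-1$ runs through all of $\{0,\dots,p-1\}$; write $c_l\in\Z_{p^{j_k-1}}$ for the common residue of $C_l$ modulo $p^{j_k-1}$.

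Next I would pass to the projected multiset $C'=\{c_l\}\subset\Z_{p^{j_k-1}}$, of cardinality $|C|/p\le p^{k-1}$. Since each remaining level satisfies $j_s\le j_k-1$ for $s<k$, the factor $e^{2\pi i c p^{-j_s}}$ is constant on each block (the block is constant mod $p^{j_k-1}$, hence mod $p^{j_s}$), so $\sum_{c\in C}e^{2\pi i c p^{-j_s}}=p\sum_{c'\in C'}e^{2\pi i c' p^{-j_s}}$ and all $k-1$ lower conditions transfer verbatim to $C'$. By the induction hypothesis in $\Z_{p^{j_k-1}}$ we get $|C'|=p^{k-1}$ (hence $|C|=p^k$), that $C'$ is in fact a genuine set, and that $\mathcal{T}_{C'}$ is of $\mathcal{T}_{I'}$-form with $I'=\{j_1-1,\dots,j_{k-1}-1\}$. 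To lift this back, note that for every $\gamma\le j_k-1$ one has $C_{\bmod p^\gamma}=C'_{\bmod p^\gamma}$, so levels $0,\dots,j_k-1$ of $\mathcal{T}_C$ coincide with $\mathcal{T}_{C'}$ and branch exactly at $I'$; in particular $|C_{\bmod p^{j_k-1}}|=p^{k-1}$, so the $p^{k-1}$ blocks are in bijection with the vertices at level $j_k-1$. Each such vertex therefore acquires exactly $p$ children (its block has a full digit $j_k-1$), producing branching at level $j_k-1$ and $|C_{\bmod p^{j_k}}|=p^k=|C|$. For $j_k\le\gamma\le n$ a squeeze closes the top: the reductions surject onto $C_{\bmod p^{j_k}}$, whence $p^k=|C_{\bmod p^{j_k}}|\le|C_{\bmod p^\gamma}|\le|C|=p^k$, so these levels all have $p^k$ vertices and hence single descendants (and $C$ has no repeats). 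Thus $\mathcal{T}_C$ is of $\mathcal{T}_I$-form with $I=\{j_1-1,\dots,j_k-1\}$.

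The main obstacle I anticipate is the bookkeeping that makes the induction close cleanly: one must check that the lower vanishing conditions survive the projection unchanged (which rests on each block being constant modulo $p^{j_k-1}$) and that, after the branching at level $j_k-1$, no further branching is introduced at levels above $j_k$. The latter is exactly where the hypothesis $|C|\le p^k$ is consumed, through the inequality chain $p^k=|C_{\bmod p^{j_k}}|\le|C_{\bmod p^\gamma}|\le|C|\le p^k$. A minor technical caveat is that $C$ is a multiset, so Lemma \ref{fenzhi} should be invoked on distinct integer lifts of the repeated elements, chosen congruent modulo $p^{j_k}$ so as not to disturb the vanishing sum.
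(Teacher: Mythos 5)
Your argument is correct: the induction on $k$ that peels off the top level $j_k$ via Lemma \ref{fenzhi}, identifies each $p$-block's structure via Lemma \ref{2}, projects to $\Z_{p^{j_k-1}}$, and then closes the top levels with the counting squeeze $p^k=|C_{\bmod p^{j_k}}|\le|C_{\bmod p^\gamma}|\le|C|\le p^k$ is sound, and you correctly flag the two delicate points (transfer of the lower vanishing conditions to the projected multiset, and the multiset lift needed before invoking Lemma \ref{fenzhi}). The paper itself gives no proof of this statement, quoting it as Theorem 2.9 of \cite{FFS16}; your proof is a faithful reconstruction built from exactly the auxiliary lemmas (Lemmas \ref{2} and \ref{fenzhi}) that the paper imports from that reference for this purpose, so no further comparison is needed.
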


\begin{lem}[{\cite[Theorem 4.2]{FFS16}}]\label{homogeneous}
Let $n$ be a positive integer, and let $C \subset \Z_{p^n}$. Then $C$ tiles  $\Z_{p^n}$ if and only if $C$ is p-homogeneous.
\end{lem}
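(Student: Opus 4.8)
The plan is to translate both implications into statements about the zero set $\mathcal{Z}_C$ and to read off $p$-homogeneity from vanishing at consecutive ``valuation levels''. For $0\le w\le n-1$ write $p^{w}$ for the canonical element of valuation $w$ in $\Z_{p^n}$, and set $V_C=\{w:\widehat{1_C}(p^{w})=0\}$. By Lemma \ref{unit} the whole level $\{g:v_p(g)=w\}$ lies in $\mathcal{Z}_C$ as soon as $p^{w}$ does, so $\mathcal{Z}_C$ is a union of full valuation levels and is completely described by $V_C$. The bridge to the tree criterion is the identity $\widehat{1_C}(p^{w})=\sum_{c\in C}e^{2\pi i c\,p^{-(n-w)}}$, so that $w\in V_C$ is exactly the vanishing hypothesis appearing in Lemma \ref{thm5.4} at level $j=n-w$, and the branched level set there becomes $\{n-1-w:w\in V_C\}$.

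For the easy implication (homogeneous $\Rightarrow$ tile), suppose $C$ is $p$-homogeneous with branched level set $I$, so $|C|=p^{|I|}$. I would first check that $C$ vanishes at every level $s$ with $s-1\in I$. Since $e^{2\pi i c/p^{s}}$ depends only on $c\bmod p^{s}$ and, in a $\mathcal{T}_I$-form tree, every vertex at level $s$ has the same number $N=p^{|I\cap\{s,\dots,n-1\}|}$ of boundary descendants, one gets $\sum_{c\in C}e^{2\pi i c/p^{s}}=N\sum_{a\in C_{\bmod p^{s}}}e^{2\pi i a/p^{s}}$; grouping the level-$s$ residues under their level-$(s-1)$ parents (each carrying all $p$ children $a_0+tp^{s-1}$, as $s-1\in I$) makes each group sum to $e^{2\pi i a_0/p^{s}}\sum_{t=0}^{p-1}e^{2\pi i t/p}=0$. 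Hence $V_C\supseteq\{n-1-i:i\in I\}$. Taking $T$ to be any $p$-homogeneous set with the complementary branched level set $\{0,\dots,n-1\}\setminus I$ (for instance $T=\{\sum_{i\notin I}t_ip^{i}\}$), the same computation gives $V_T\supseteq\{n-1-i:i\notin I\}$, so $V_C\cup V_T=\{0,\dots,n-1\}$ and thus $\mathcal{Z}_C\cup\mathcal{Z}_T=\Z_{p^n}\setminus\{0\}$; since also $|C|\,|T|=p^{|I|}p^{n-|I|}=p^{n}$, Lemma \ref{2.6}(5) shows $(C,T)$ is a tiling pair.

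For the converse (tile $\Rightarrow$ homogeneous), let $(C,T)$ be a tiling pair with $|C|=p^{k}$, so $|T|=p^{n-k}$ and, by Lemma \ref{2.6}(5), $\mathcal{Z}_C\cup\mathcal{Z}_T=\Z_{p^n}\setminus\{0\}$. Evaluating at the representatives $p^{w}$ gives $V_C\cup V_T=\{0,\dots,n-1\}$, hence $|V_C|+|V_T|\ge n$. What I really want is the sharper bound $|V_C|\ge k$, after which Lemma \ref{thm5.4}—applied to $C$ with its $|V_C|\ge k$ vanishing levels and $|C|=p^{k}\le p^{|V_C|}$—forces $|V_C|=k$ and outputs that $C$ is $p$-homogeneous with branched level set $\{n-1-w:w\in V_C\}$. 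The main obstacle is precisely this counting step, since $|V_C|+|V_T|\ge n$ does not by itself separate the two cardinalities. I would resolve it by contradiction: if $|V_C|=a<k$ then $|V_T|\ge n-a>n-k$, so $|T|=p^{n-k}<p^{|V_T|}$; but then Lemma \ref{thm5.4} applied to $T$ with its $|V_T|$ vanishing levels would yield $|T|=p^{|V_T|}>|T|$, which is absurd. Hence $|V_C|\ge k$, and the criterion applies to $C$ as above. The degenerate cases $k=0$ (a singleton) and $k=n$ (all of $\Z_{p^n}$), where Lemma \ref{thm5.4} is not available, are checked directly.
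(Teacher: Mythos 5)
Your proof is correct. The paper itself does not prove this lemma --- it imports it from \cite[Theorem 4.2]{FFS16} --- but your argument (reducing both implications to the set $V_C$ of vanishing valuation levels, using Lemma \ref{2.6}(5) as the tiling criterion, and applying Lemma \ref{thm5.4} both to extract the homogeneous structure of $C$ and, applied to $T$, to rule out $|V_C|<k$ in the counting step) is a sound, self-contained derivation from the lemmas quoted in the paper and follows essentially the same route as the cited source.
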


\section{The structure of tiles on $\Z_{p^nq}$}
In this section, we primarily utilize the results established in \cite{FFS16} and \cite{CM99} to characterize the structure of tiles on
$\Z_{p^{n}q}$.

The structural properties of tiles in $\Z_{p^n}$ are characterized by $p$-homogeneity, as showed in \cite{FFS16}. Furthermore, the equivalence of
spectral sets and tiles in $\Z_{p^{n}q}$, where $p$ and $q$ are distinct primes, was proven in \cite{CM99}. This fundamental fact serves as a cornerstone in
subsequent proofs within this section.

\subsection{CM condition for tiles in finite group $\Z_N$.}
Let $A \subseteq \Z_N$ be a multi-set, and let $m_a$ denote the multiplicity of $a \in A$. The \emph{mask polynomial} of $A$ is defined as
\[
A(x) = \sum_{a \in A} m_a x^a.
\]
Denote by $\omega_N=e^{2\pi i /N}$,  which is a primitive $N$-th root of unity.
For any $d\in \{0, 1, \dots, N-1\}$, it is straightforward to verify that $A(\omega_N^d) = 0$ is equivalent to $\widehat{1_A}(d) = 0$.

Denote by $\Phi_s(x)$  the $s$-th cyclotomic polynomial.
\begin{lem}[{\cite[Lemma 2.4]{S19}}]\label{lem5.1}
Let $A$ be a subset of $\Z_N$. Let $p$ be a prime factor of $N$, and $a \in \Z_N$. Then the following statements hold.
\begin{enumerate}[{\rm (1)}]
    \item $A(\omega_N^a) = 0$ if and only if $A(\omega_N^{ag}) = 0$ for any $g \in \Z_N^*$.
    \item For any $d \mid N$,  $A(\omega_N^d) = 0$ if and only if  $\Phi_\frac{N}{d}(X) \mid A(X).$
    \item Suppose that $|\{p^d \in \Z_N: A(\omega_{N}^{N/p^d})= 0\}| = k$. Then $p^k \mid |A|$.
\end{enumerate}
\end{lem}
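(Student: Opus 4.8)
The plan is to reduce every assertion to the single principle that, for $A(X)\in\Z[X]$, the vanishing $A(\zeta)=0$ at a root of unity $\zeta$ is equivalent to divisibility of $A(X)$ by the minimal polynomial of $\zeta$, which is a cyclotomic polynomial, and then to exploit the Galois action on roots of unity together with the coprimality of distinct cyclotomic polynomials. The one ingredient I would invoke without proof is the classical fact that $\Phi_m(X)$ is the minimal polynomial over $\Q$ of any primitive $m$-th root of unity, is monic with integer coefficients, and that distinct cyclotomic polynomials are pairwise coprime in $\Z[X]$.

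For part (1), I would set $d=\gcd(a,N)$, so that $\omega_N^a$ has order $N/d$ and is thus a primitive $(N/d)$-th root of unity with minimal polynomial $\Phi_{N/d}(X)$. Since $A(X)\in\Z[X]$, the equation $A(\omega_N^a)=0$ forces $\Phi_{N/d}(X)\mid A(X)$ (using that the minimal polynomial divides any rational polynomial vanishing at $\omega_N^a$, then Gauss's lemma to descend to $\Z[X]$ since $\Phi_{N/d}$ is monic). For $g\in\Z_N^*$ one has $\gcd(ag,N)=\gcd(a,N)=d$, so $\omega_N^{ag}$ is again a primitive $(N/d)$-th root of unity and hence a root of $\Phi_{N/d}(X)\mid A(X)$, giving $A(\omega_N^{ag})=0$; the converse implication follows by replacing $g$ with $g^{-1}$ in $\Z_N^*$. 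Part (2) is then just the special case $d\mid N$: here $\omega_N^d=\omega_{N/d}$ is literally a primitive $(N/d)$-th root of unity, so $A(\omega_N^d)=0$ is equivalent to $\Phi_{N/d}(X)\mid A(X)$ by the same argument read in both directions.

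For part (3), I would write the hypothesized set as $\{p^{d_1},\dots,p^{d_k}\}$ with $1\le d_1<\dots<d_k$; the exponent $d=0$ is automatically excluded because $\omega_N^{N/p^0}=\omega_N^{N}=1$ and $A(1)=|A|\neq 0$. Applying part (2) with the divisor $N/p^{d_i}$ of $N$, and noting $N/(N/p^{d_i})=p^{d_i}$, each vanishing $A(\omega_N^{N/p^{d_i}})=0$ yields $\Phi_{p^{d_i}}(X)\mid A(X)$. Because the $\Phi_{p^{d_i}}(X)$ are distinct cyclotomic polynomials, they are pairwise coprime, so their product $\prod_{i=1}^{k}\Phi_{p^{d_i}}(X)$ divides $A(X)$ in $\Z[X]$. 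Evaluating at $X=1$ and using $\Phi_{p^{d}}(1)=p$ for every $d\ge 1$ then gives $p^{k}\mid A(1)=|A|$.

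The computations throughout are routine, so the only point I would treat with real care—and what I regard as the main (if mild) obstacle—is the passage in part (3) from $k$ separate divisibilities $\Phi_{p^{d_i}}(X)\mid A(X)$ to the single divisibility by their product. This upgrade is exactly what converts the hypothesis into the multiplicative bound $p^k\mid|A|$, and it rests on the pairwise coprimality of distinct cyclotomic polynomials; without it one would only recover $p\mid|A|$. Once this is in place, the evaluation $\Phi_{p^d}(1)=p$ finishes the argument immediately.
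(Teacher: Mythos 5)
Your proof is correct: reducing each part to ``$A(\zeta)=0$ iff the cyclotomic minimal polynomial of $\zeta$ divides $A(X)$'' and then using the Galois conjugacy of primitive roots for (1)--(2) and the pairwise coprimality of distinct $\Phi_{p^{d_i}}$ together with $\Phi_{p^d}(1)=p$ for (3) is exactly the standard argument. The paper itself gives no proof of this lemma (it is quoted from \cite[Lemma 2.4]{S19}), and your write-up is essentially the proof found in that reference, so there is nothing further to compare.
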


Let $S$ denote the set of prime powers dividing $N$, and define
\[
S_A = \{s \in S : \Phi_s(x) \mid A(x)\}.
\]
Coven and Meyerowitz introduced the following two properties in \cite{CM99}, which play a crucial role in characterizing  a set $A$ that tile $\Z_N$ by
translations:

\begin{itemize}
\item[(T1)] $|A| = A(1) = \prod_{s \in S_A} \Phi_s(1)$.
\item[(T2)] Let   $s_1, \dots, s_m \in S_A$ be powers of different primes. Then the polynomial $\Phi_{s_1 \dotsm s_m}(x)$ divides $A(x)$.
\end{itemize}
The following results are established in \cite{CM99}.
\begin{theo}[\cite{CM99}]Let  $\Omega\subset \Z_N$.
\begin{itemize}
\item If $\Omega$ satisfies properties \textbf{\rm (T1)} and \textbf{\rm (T2)}, then $\Omega$ tiles $\Z_N$ by translations.
\item If $\Omega$ tiles $\Z_N$ by translations, then \textbf{\rm(T1)} holds.
\item If $\Omega$ tiles $\Z_N$ by translations and $|\Omega|$ has at most two prime factors, then \textbf{\rm(T2)} holds.
\end{itemize}
\end{theo}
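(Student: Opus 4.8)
The plan is to recast tiling as a divisibility statement about mask polynomials and then to treat the three assertions separately; I write $A(x)=\sum_{a\in A}x^a$ for the tile (playing the role of $\Omega$) and $T(x)$ for a candidate complement. By Lemma \ref{2.6}(5), $(A,T)$ tiles $\Z_N$ iff $|A|\,|T|=N$ and $\mathcal{Z}_A\cup\mathcal{Z}_T=\Z_N\setminus\{0\}$. Since $\mathcal{Z}_A=\{d:A(\omega_N^d)=0\}$ and, by Lemma \ref{lem5.1}, membership $d\in\mathcal{Z}_A$ depends only on the order $d'=N/\gcd(d,N)$ of $\omega_N^d$, with $A(\omega_N^d)=0\Leftrightarrow \Phi_{d'}(x)\mid A(x)$, the tiling condition becomes: $|A|\,|T|=N$ and, for every $d'\mid N$ with $d'>1$, $\Phi_{d'}(x)\mid A(x)$ or $\Phi_{d'}(x)\mid T(x)$. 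For a prime power $s\mid N$ this reads $s\in S_A\cup S_T$. Throughout I set $a_p=\#\{k\ge 1:p^k\in S_A\}$, and recall $\Phi_{p^k}(1)=p$ while $\Phi_s(1)=1$ when $s$ has at least two distinct prime factors, so that $\prod_{s\in S_A}\Phi_s(1)=\prod_p p^{a_p}$.

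For the implication ``tiling $\Rightarrow$ (T1)'', I would fix a prime $p$ with $e_p=v_p(N)$ (the $p$-adic valuation) and a complement $T$. Lemma \ref{lem5.1}(3) gives $p^{a_p}\mid|A|$ and $p^{b_p}\mid|T|$, where $b_p=\#\{k\ge1:p^k\in S_T\}$. Applying the divisibility criterion to the prime powers $p,p^2,\dots,p^{e_p}$ forces $a_p+b_p\ge e_p$, while $|A|\,|T|=N$ gives $v_p(|A|)+v_p(|T|)=e_p$. Since $a_p\le v_p(|A|)$ and $b_p\le v_p(|T|)$, the chain $a_p+b_p\ge e_p=v_p(|A|)+v_p(|T|)\ge a_p+b_p$ collapses to equalities, yielding $v_p(|A|)=a_p$ for every $p$. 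Hence $|A|=\prod_p p^{a_p}=\prod_{s\in S_A}\Phi_s(1)$, which is (T1).

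For ``(T1) and (T2) $\Rightarrow$ tiling'' I would construct an explicit complement. For each prime $p\mid N$ set $A_p=\{1\le k\le e_p:p^k\in S_A\}$ and let $B_p$ be its complement in $\{1,\dots,e_p\}$; via the isomorphism $\Z_N\cong\prod_p\Z_{p^{e_p}}$ let $T$ be the product of the base-$p$ ``digit'' sets branching exactly at the positions in $B_p$. Then $T$ is a genuine $\{0,1\}$-set with $|T|=\prod_p p^{|B_p|}=N/|A|$ by (T1). Because $T$ is a product, $T(\zeta_{d'})=0$ for a primitive $d'$-th root $\zeta_{d'}$ iff some factor vanishes, i.e. iff some $p\mid d'$ has $v_p(d')\in B_p$; equivalently $\Phi_{d'}\nmid T$ precisely when every prime-power constituent $p^{v_p(d')}$ of $d'$ lies in $S_A$. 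In that case (T2) (or, when $d'$ is itself a prime power, the definition of $S_A$) gives $\Phi_{d'}\mid A$. Thus the criterion holds for every $d'>1$, and $A\oplus T=\Z_N$.

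The last assertion, ``tiling and $|A|$ has at most two prime factors $\Rightarrow$ (T2)'', is the main obstacle. By (T1) the hypothesis means $S_A$ involves at most two primes, so every admissible family $s_1,\dots,s_m$ in (T2) has $m\le 2$, and it suffices to show that $p^a,q^b\in S_A$ (with $p\ne q$) imply $\Phi_{p^aq^b}\mid A$. The criterion already gives $\Phi_{p^aq^b}\mid A$ or $\Phi_{p^aq^b}\mid T$, so I would assume the latter and seek a contradiction. Passing to residues modulo $n=p^aq^b$ replaces $A$ by a nonnegative multiset $C$ in $\Z_n$ with $\Phi_{p^a}\mid C$ and $\Phi_{q^b}\mid C$, and the target becomes $\Phi_n\mid C$. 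This does not follow for an arbitrary $C$, so the tiling itself (through the size relation and the forced behaviour of $T$) must be fed in, and the decisive tool is the classification of vanishing sums of $n$-th roots of unity when $n$ has only two prime factors --- each such relation being a nonnegative combination of rotated $\Phi_p$- and $\Phi_q$-relations, analogous to but genuinely beyond Lemmas \ref{lem1}--\ref{fenzhi}. Reconciling this rigid structure with the $\{0,1\}$ nature of $A$ is what should force $\Phi_{p^aq^b}\mid A$, and it is exactly here that the two-prime hypothesis is indispensable, since three or more primes admit the known counterexamples to (T2). I expect the combinatorial bookkeeping in this final step to be the genuinely hard part of the argument.
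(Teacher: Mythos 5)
This theorem is quoted from Coven--Meyerowitz \cite{CM99}; the paper supplies no proof of its own, so your attempt can only be measured against the original argument. Your treatment of the first two bullets is essentially correct and is in fact the standard one. For tiling $\Rightarrow$ (T1), the chain $e_p\le a_p+b_p\le v_p(|A|)+v_p(|T|)=e_p$, obtained from Lemma \ref{lem5.1}(3) together with $|A|\,|T|=N$ and the covering condition $\mathcal{Z}_A\cup\mathcal{Z}_T=\Z_N\setminus\{0\}$ restricted to prime powers, is exactly the Coven--Meyerowitz proof. For (T1)$\wedge$(T2) $\Rightarrow$ tiling, the complement built as a CRT-product of $p$-homogeneous digit sets, arranged so that $\Phi_{d'}\mid T$ precisely when some prime-power constituent of $d'$ lies outside $S_A$, is likewise their construction; it is sound modulo the routine verification that the zero set of a product set is the union of the zero sets of its factors.

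The third bullet is a genuine gap: you lay out a plan and explicitly stop before executing it, and the plan as stated has two concrete defects. First, ``assume $\Phi_{p^aq^b}\mid T$ and seek a contradiction'' is not a workable framing, since $\Phi_{p^aq^b}$ may well divide both $A(x)$ and $T(x)$; what must be proved is that $\Phi_{p^a}\mid A$ and $\Phi_{q^b}\mid A$, together with the mere existence of a tiling by $A$, force $\Phi_{p^aq^b}\mid A$, and no hypothesis on $T$ can be contradicted to yield this. Second, the de~Bruijn/Lam--Leung description of vanishing sums of $p^aq^b$-th roots of unity constrains a multiset only once one already knows $\Phi_{p^aq^b}$ divides its mask polynomial; it provides no mechanism for deducing that divisibility from $\Phi_{p^a}\mid A$ and $\Phi_{q^b}\mid A$ alone, and indeed there are sets with the latter two divisibilities but not the former, so the tiling hypothesis must enter in an essential, structural way. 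In \cite{CM99} this bullet is proved by an induction on $|A|$ that repeatedly modifies the tiling pair (restricting to cosets of $p\Z_N$ and replacing $A(x)$ by related polynomials such as $A(x^p)$ reduced modulo $x^N-1$) and tracks how the sets $S_A$ and $S_T$ transform; none of that machinery appears in your sketch, so the third assertion remains unproved.
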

\begin{coro}\label{cor:T1T2}
For $N=p^n q$,  any tile $\Omega$ in $\Z_N$ satisfies properties \textbf{\rm (T1)} and \textbf{\rm (T2)}.
\end{coro}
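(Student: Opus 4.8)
The plan is to deduce this directly from the Coven--Meyerowitz theorem stated immediately above, since the arithmetic of $N = p^n q$ forces the cardinality of any tile to have at most two prime factors. First I would record that if $\Omega$ tiles $\Z_N$ by translations, then by Lemma \ref{2.6} (the equivalence $(1)\Leftrightarrow(4)$, or simply by counting) there is a tiling complement $T$ with $|\Omega|\cdot|T| = |\Z_N| = N = p^n q$. In particular $|\Omega|$ divides $p^n q$.

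Next I would extract the structural consequence of this divisibility. Since $|\Omega| \mid p^n q$ and the only primes dividing $N$ are $p$ and $q$, we can write
\[
|\Omega| = p^a q^b, \qquad 0 \le a \le n,\ \ 0 \le b \le 1.
\]
Thus $|\Omega|$ is divisible by at most the two primes $p$ and $q$; that is, $|\Omega|$ has at most two prime factors. This is the only genuine observation needed, and it is essentially immediate from the shape of $N$.

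Finally I would invoke the three bullet points of the Coven--Meyerowitz theorem. The second bullet states that any tile of $\Z_N$ satisfies \textbf{\rm(T1)} unconditionally, so \textbf{\rm(T1)} holds for $\Omega$. The third bullet states that a tile whose cardinality has at most two prime factors satisfies \textbf{\rm(T2)}; by the previous paragraph the hypothesis is met, so \textbf{\rm(T2)} holds for $\Omega$ as well. Combining the two gives that $\Omega$ satisfies both \textbf{\rm(T1)} and \textbf{\rm(T2)}, which is exactly the assertion.

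I do not anticipate any real obstacle here: the corollary is a direct specialization of the cited theorem, and the whole content is the remark that $p^n q$ has only two prime divisors so that the two-prime hypothesis of the \textbf{\rm(T2)} implication is automatically satisfied for every tile. The only point worth stating carefully is why $|\Omega|$ has at most two prime factors, namely that $|\Omega|$ divides $N = p^n q$ because $|\Omega|\cdot|T| = N$.
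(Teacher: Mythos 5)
Your proposal is correct and follows exactly the route the paper intends: the corollary is stated without proof as an immediate consequence of the Coven--Meyerowitz theorem, the whole content being that $|\Omega|$ divides $N=p^nq$ and hence has at most two prime factors, so both the (T1) bullet and the two-prime (T2) bullet apply. Nothing is missing.
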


\subsection{The structure of tiles in $\Z_{p^nq}$}

Now, let $N=p^nq$ with $p$, $q$ are distinct primes. For a set $A\subset\Z_{p^{n}q}$,
denote ${\mathcal Z}_A=\{x\in \Z_{p^{n}q}:  {\widehat{1_A}}(x) = 0 \} $.
Let \[\mathcal{I}_A=\big\{0\leq a \leq n-1:\ p^aq \in {\mathcal Z}_A \big\}.\]

\begin{lem}
Let  $(\Omega,T)$ be a tiling pair in $\Z_{p^{n}q}$.  Then  the cardinality of $\Omega$ is either $p^t$ or $p^tq$,
where $0 \leq  t \leq  n$. In both cases $|\mathcal{I}_\Omega|=t$ and $\mathcal{I}_\Omega$ and $\mathcal{I}_T$ forms
a disjoint union of $\{0, 1, \cdots, n-1\}$.
\end{lem}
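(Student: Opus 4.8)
The plan is to translate the conditions defining $\mathcal{I}_\Omega$ into cyclotomic divisibility statements, read off $|\Omega|$ from the Coven--Meyerowitz condition (T1), and then combine a cardinality count with the zero-set characterization of tiling pairs to force the partition. The first step is a dictionary between $\mathcal{I}_\Omega$ and $S_\Omega$. For $0 \le a \le n-1$ the element $p^a q$ is a nonzero divisor of $N = p^n q$ with $N/(p^a q) = p^{n-a}$, so Lemma~\ref{lem5.1}(2) gives
\[
a \in \mathcal{I}_\Omega \iff \widehat{1_\Omega}(p^a q) = 0 \iff \Phi_{p^{n-a}}(x) \mid \Omega(x) \iff p^{n-a} \in S_\Omega .
\]
Since $a \mapsto p^{n-a}$ is a bijection from $\{0,\dots,n-1\}$ onto the set $\{p, p^2, \dots, p^n\}$ of powers of $p$ dividing $N$, this yields $|\mathcal{I}_\Omega| = |S_\Omega \cap \{p,\dots,p^n\}|$; in particular $0 \le |\mathcal{I}_\Omega| \le n$.

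Next I would extract the cardinality. Because $\Omega$ tiles $\Z_{p^nq}$, Corollary~\ref{cor:T1T2} supplies property (T1), namely $|\Omega| = \prod_{s \in S_\Omega} \Phi_s(1)$. Evaluating cyclotomic polynomials at $1$ gives $\Phi_{p^k}(1) = p$ for every $1 \le k \le n$ and $\Phi_q(1) = q$, and these are the only prime powers available in $S$. Writing $t = |\mathcal{I}_\Omega|$ and letting $\epsilon \in \{0,1\}$ record whether $q \in S_\Omega$, the product collapses to $|\Omega| = p^t q^{\epsilon}$, i.e. $|\Omega| = p^t$ or $|\Omega| = p^t q$. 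This settles the first assertion together with the identity $|\mathcal{I}_\Omega| = t$.

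Finally, for the partition claim I would run the same computation for $T$. By Lemma~\ref{2.6} the pair $(T,\Omega)$ is also a tiling pair, so $T$ is a tile and the previous paragraph applies to give $|T| = p^{|\mathcal{I}_T|} q^{\epsilon'}$ with $\epsilon' \in \{0,1\}$. Comparing with $|\Omega|\,|T| = |G| = p^n q$ and using unique factorization forces $|\mathcal{I}_\Omega| + |\mathcal{I}_T| = n$ (and $\epsilon + \epsilon' = 1$). On the other hand, each $p^a q$ with $0 \le a \le n-1$ is a nonzero element of $G$, so the condition $\mathcal{Z}_\Omega \cup \mathcal{Z}_T = G \setminus \{0\}$ of Lemma~\ref{2.6}(5) places it in $\mathcal{Z}_\Omega$ or $\mathcal{Z}_T$, whence $\mathcal{I}_\Omega \cup \mathcal{I}_T = \{0,\dots,n-1\}$. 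Comparing cardinalities,
\[
n = |\mathcal{I}_\Omega \cup \mathcal{I}_T| = |\mathcal{I}_\Omega| + |\mathcal{I}_T| - |\mathcal{I}_\Omega \cap \mathcal{I}_T| = n - |\mathcal{I}_\Omega \cap \mathcal{I}_T|,
\]
so the intersection is empty and the two index sets partition $\{0,\dots,n-1\}$.

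The individual computations are routine; the step I expect to be the crux is the synchronization of the two counts. Property (T1) must be applied to \emph{both} $\Omega$ and $T$ so that the product identity $|\Omega|\,|T| = p^n q$ pins down $|\mathcal{I}_\Omega| + |\mathcal{I}_T| = n$ exactly, for only then does the surjectivity furnished by the zero-set condition upgrade to a genuine disjoint union. The one hypothesis-check underpinning everything is that $p^a q$ ranges over admissible nonzero divisors of $N$ as $a$ runs through $\{0,\dots,n-1\}$, which is what legitimizes the use of Lemma~\ref{lem5.1}(2).
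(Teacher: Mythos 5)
Your proof is correct, but it pins down the cardinalities by a different mechanism than the paper does. The paper's own argument never invokes the Coven--Meyerowitz condition (T1): it first gets $|\Omega|=p^t$ or $p^tq$ simply from $|\Omega|\mid p^nq$, then uses the divisibility statement of Lemma~\ref{lem5.1}(3) to obtain the one-sided bounds $|\mathcal{I}_\Omega|\le t$ and $|\mathcal{I}_T|\le n-t$, and finally squeezes against the lower bound $|\mathcal{I}_\Omega|+|\mathcal{I}_T|\ge|\mathcal{I}_\Omega\cup\mathcal{I}_T|=n$ coming from $\mathcal{Z}_\Omega\cup\mathcal{Z}_T=G\setminus\{0\}$; equality throughout forces both $|\mathcal{I}_\Omega|=t$ and disjointness in one stroke. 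You instead route through Lemma~\ref{lem5.1}(2) and Corollary~\ref{cor:T1T2} to get the exact identity $|\Omega|=p^{|\mathcal{I}_\Omega|}q^{\epsilon}$ directly from (T1), apply the same to $T$, and then use $|\Omega|\,|T|=p^nq$ plus inclusion--exclusion. Your version buys an immediate equality (no squeeze needed) at the cost of heavier machinery -- the full Coven--Meyerowitz theorem that tiles satisfy (T1) -- whereas the paper gets by with the elementary counting fact behind Lemma~\ref{lem5.1}(3). Both arguments share the same backbone (the dictionary between $\mathcal{I}_A$ and prime-power zeros, and the covering condition from Lemma~\ref{2.6}(5)), and both are sound.
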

\begin{proof}
Since  $(\Omega,T)$ is a tiling pair  in $\Z_{p^{n}q}$, it follows that $|\Omega| \mid  p^nq$.
Hence $|\Omega|=p^t$ or $|\Omega|=p^t q$ for some  $0 \leq  t \leq  n$.
Note that for any $0\leq a \leq n-1$, $p^aq \in \mathcal{Z}_\Omega\cup \mathcal{Z}_T$. By statement (3) of Lemma \ref{lem5.1},  $|\mathcal{I}_\Omega|=t$, $|\mathcal{I}_T|=n-t$ and
$\mathcal{I}_\Omega$ and $\mathcal{I}_T$ forms  a disjoint union of $\{0, 1, \cdots, n-1\}$.
\end{proof}

 Note that $\Z_{p^{n}q}$ is isomorphic to the group $\Z_{p^n} \times \Z_q$ by the isomorphism
 \[ x \mapsto (x_1, x_2) \quad \hbox{ with }  x_1\equiv x \! \!\!\pmod{p^n}, x_2\equiv x  \! \!\! \pmod q. \]  Let $\pi_1: \Z_{p^{n}} \times \Z_{q} \to \Z_{p^n} $  and
 $\pi_2: \Z_{p^{n}} \times \Z_{q} \to \Z_{q}  $ be the projection maps.
 In the reminder of this section, we consider the  product  form $\Z_{p^n} \times \Z_q$.

 For a function $f$ on  $\Z_{p^n} \times \Z_q$, the Fourier transform of $f$  is
 \[\widehat{f}(x,y)=\sum_{(u,v)\in\Z_{p^n} \times \Z_q} f(x,y)e^{2\pi i( \frac{ux}{p^n}+\frac{vy}{q})}.\]
   Hence, it follows that   $(p^a, 0)\in \mathcal{Z}_A$ if and only if $a\in  \mathcal{I}_{A}$.


\begin{proof}[Proof of Theorem \ref{main1} ]
(1) Case $|\Omega|=p^t$. Note that
 \[\widehat{1_{\Omega}}(p^a, 0)=\sum_{(u, v)\in\Z_{p^n} \times \Z_q} e^{2\pi i \frac{u}{p^{n-a}}}.\]
 Consider $\pi_1(\Omega)$ as a multi-set in $\Z_{p^n}$, where the multiplicity of $x \in \Z_{p^n}$ is given by $|\pi_1^{-1}(x)|$. Therefore,
 $\widehat{1_{\Omega}}(p^a,0) = 0$ if and only if $\widehat{1_{\pi_1(\Omega)}}(p^a) = 0$.


By Lemma \ref{thm5.4},  it follows that the multiplicity of each point  $\pi_1(\Omega)$ is one and $\pi_1(\Omega)$ corresponds to a   ${\mathcal T}_I$-form $p$-homogeneous tree with $I = n-1-\mathcal{I}_\Omega$. 

(2) Case $|\Omega|=p^tq$.  Assume that $(\Omega, T)$ is a tiling pair. Then we have $|T|=p^{n-t}$. By statement (3) of Lemma \ref{lem5.1},
$\Omega(\omega_{N}^{p^n})=0$, which is equivalent to $\Phi_{q}(X) \mid \Omega(X).$
Hence, for any $a\in I_\Omega$, by Corollary \ref{cor:T1T2} and  statement (2) of Lemma \ref{lem5.1}, we have  $\Omega(\omega_{N}^{p^a})=0$,
which is equivalent to $(p^a,j )\in \mathcal{Z}_\Omega$ with $j \equiv p^a \pmod{q}.$  Actually, this implies that $(p^a, j )\in \mathcal{Z}_\Omega$ for all $a \in \mathcal{I}_\Omega$ and $j\in \{0, \cdots, q-1\}$.
Moreover, it is clear that  $(0,j )\in \mathcal{Z}_\Omega$ for all $j\in \{1, \cdots, q-1\}$. Take a $p$-homogeneous ${\mathcal T}_{J}$-form subset  $T_0\subset  \Z_{p^n}$. Then $(\Omega, T_0\times\{0\})$ forms a tiling pair, which implies that each  $(\Omega_j, T_0)$ forms a tiling pair of $\Z_{p^n}$.
\end{proof}

\section{The structure of tiles in $\Z_{p^n}\times\Z_p$}

\subsection{Equidistribution property}
 For $\textbf{x}=(x_1,x_2),~\textbf{y}=(y_1,y_2)\in \Z_{p^n}\times \Z_{p}$,
we define the {\em inner product} in $\Z_{p^n}\times \Z_{p}$ by the formula
\[\langle \textbf{x},\textbf{y} \rangle= x_1y_1 + p^{n-1}x_2y_2\in \Z_{p^n}.\]

We define
\[H (\textbf{d},t):=\{\mathbf{x}\in \Z_{p^n}\times \Z_{p}: \langle \mathbf{x},\textbf{d}\rangle=t\},\]
for $\textbf{d}\in \Z_{p^n}\times \Z_{p}$ and $t\in \Z_{p^n}$. We call such set a \textbf{plane} in $\Z_{p^n}\times \Z_{p}$.

For $r\in\Z_{p^n}^\ast$ and $\textbf{d}=(d_1,d_2)\in \Z_{p^n}\times \Z_{p}$, we define the \textbf{scalar} product as:
\[r\textbf{d}=(\tilde{d}_1, \tilde{{d}}_2)\in \Z_{p^n}\times \Z_{p},\]
where $\tilde{{d}}_1\equiv r {d}_1\bmod p^n$ and $\tilde{ {d}}_2\equiv r {d}_2 \bmod p$.

The following lemma provide the equidistribution property of a set $A\subset \Z_{p^n}\times \Z_{p}$.

\begin{lem}[{\cite[Lemma 3.1]{S19}}]\label{52}
Let $A\subseteq \Z_{p^n}\times \Z_{p}$ and $\mathbf{d}\in \Z_{p^n}\times \Z_{p}$. The following are equivalent:
\begin{enumerate}[{\rm(1)}]
  \item $\widehat{1_A}(\mathbf{d})=0$;

  \item $\widehat{1_A}(r\mathbf{d})=0$, for any $r\in \Z_{p^n}^\ast$;

  \item $|A\cap H(\mathbf{d}, t)|=|A\cap H(\mathbf{d}, t')|$, if $t\equiv t' \bmod p^{n-1}$.

\end{enumerate}
\end{lem}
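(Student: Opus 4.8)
The plan is to reduce all three conditions to a single exponential sum indexed by the value of the inner product and then to apply Lemma~\ref{lem1}. First I would record the relationship between the character $\chi_{\mathbf d}$ and the inner product. Directly from the definitions of $\chi_{\mathbf d}$ and of $\langle\cdot,\cdot\rangle$,
\[
\chi_{\mathbf d}(\mathbf x)=e^{2\pi i\left(\frac{x_1 d_1}{p^n}+\frac{x_2 d_2}{p}\right)}=e^{2\pi i\frac{x_1 d_1+p^{n-1}x_2 d_2}{p^n}}=\omega_{p^n}^{\langle \mathbf x,\mathbf d\rangle},
\]
so that $\widehat{1_A}(\mathbf d)=\sum_{\mathbf x\in A}\omega_{p^n}^{\langle\mathbf x,\mathbf d\rangle}$. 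Since the planes $H(\mathbf d,t)$, $t\in\Z_{p^n}$, partition the group according to the value of $\langle\cdot,\mathbf d\rangle$, I would group the sum by this value. Writing $N_t=|A\cap H(\mathbf d,t)|$, this yields the key identity
\[
\widehat{1_A}(\mathbf d)=\sum_{t=0}^{p^n-1}N_t\,\omega_{p^n}^t .
\]

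For the equivalence of (1) and (3), observe that (1) now reads exactly as the membership $(N_0,\dots,N_{p^n-1})\in\mathcal{M}_{p^n}$. Assuming (1), Lemma~\ref{lem1} applies verbatim and gives $N_k=N_{k+jp^{n-1}}$ for all $0\le k\le p^{n-1}-1$ and all $0\le j\le p-1$; this is precisely the statement that $N_t=N_{t'}$ whenever $t\equiv t'\pmod{p^{n-1}}$, i.e. (3). Conversely, assuming (3), I would collect the terms of the key identity within each residue class modulo $p^{n-1}$ and factor out the common value:
\[
\widehat{1_A}(\mathbf d)=\sum_{k=0}^{p^{n-1}-1}N_k\,\omega_{p^n}^{k}\sum_{j=0}^{p-1}\omega_{p^n}^{jp^{n-1}}.
\]
Because $\omega_{p^n}^{p^{n-1}}=\omega_p$ is a primitive $p$-th root of unity, the inner sum $\sum_{j=0}^{p-1}\omega_p^{j}$ vanishes, so $\widehat{1_A}(\mathbf d)=0$, which is (1).

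For the equivalence of (1) and (2), the implication (2)$\Rightarrow$(1) is immediate by taking $r=1$. For (1)$\Rightarrow$(2), I would note that the scalar product $r\mathbf d=(rd_1\bmod p^n,\,rd_2\bmod p)$ coincides with the integer multiple of $\mathbf d$ taken in the group $G=\Z_{p^n}\times\Z_p$, and that $r\in\Z_{p^n}^\ast$ forces $\gcd(r,p)=1$, hence $\gcd(r,|G|)=\gcd(r,p^{n+1})=1$. Lemma~\ref{unit} then gives $r\mathbf d\in\mathcal Z_A$ at once. The only real content of the argument is the setup in the first paragraph: once the Fourier transform is rewritten as $\sum_t N_t\omega_{p^n}^t$ and recognized as a relation in $\mathcal{M}_{p^n}$, the structural Lemma~\ref{lem1} does all the work, and the remaining verifications are routine. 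Thus I expect no substantial obstacle beyond getting the character–inner-product correspondence and the partition of $A$ by the planes $H(\mathbf d,t)$ exactly right.
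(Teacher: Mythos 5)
Your argument is correct: the identification $\chi_{\mathbf d}(\mathbf x)=\omega_{p^n}^{\langle\mathbf x,\mathbf d\rangle}$ reduces $\widehat{1_A}(\mathbf d)$ to $\sum_t N_t\omega_{p^n}^t$ with $N_t=|A\cap H(\mathbf d,t)|$, Lemma~\ref{lem1} gives exactly the constancy of $N_t$ along residue classes modulo $p^{n-1}$ (and conversely the inner geometric series vanishes), and Lemma~\ref{unit} handles $(1)\Leftrightarrow(2)$ since $r\in\Z_{p^n}^\ast$ implies $\gcd(r,p^{n+1})=1$. The paper itself states this lemma as a quotation from \cite[Lemma~3.1]{S19} and gives no proof, but your route through the $\Z$-module $\mathcal{M}_{p^n}$ is the standard one and is fully consistent with the tools the paper sets up in its preliminaries.
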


For $\mathbf{u},\mathbf{v} \in \Z_{p^n}\times \Z_{p}$, we  define the relation $\mathbf{u}\sim \mathbf{v}$, if there exists $r\in \Z_{p^n}^\ast$ such that
$\mathbf{u}=r\mathbf{v}$. Thus, the equivalent classes in $\Z_{p^n}\times \Z_{p}$ by $``\sim"$ are
\[(1, 0),(c, p^i) \quad \text{for all}~c\in \Z_{p^n}\times \Z_{p} \quad\text {and} \quad i\in \{0, 2, \cdots, n-1\}.\]

Thus, by Lemma \ref{52}, when we study the set of zeros $\mathcal{Z}_A$ of a set $A\subseteq \Z_{p^n}\times \Z_{p}$,
we only need to consider the elements which have the above forms.

Now we give the divisibility property for a set in $\Z_{p^n}\times \Z_{p}$.

\begin{lem}[\cite{Z2022} Lemma 3.2] \label{lem-3.2}
Let $A\subseteq \Z_{p^n}\times \Z_{p}$. If $(p^{i_1}, a), (p^{i_2}, 0), \cdots, ( p^{i_s}, 0)\in \mathcal{Z}_A$
for some $a \in \Z_{p^n}$ and $0 \leq {i_1} < {i_2} < ... < {i_s}\leq n-1$, then $p^s\mid  |A|$.
\end{lem}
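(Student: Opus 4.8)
The plan is to induct on $s$, using the divisibility lemma for vanishing sums of $p^n$-th roots of unity (Lemma~\ref{fenzhi}) together with its refinement for $p$-term sums (Lemma~\ref{2}). Throughout I write $\omega=e^{2\pi i/p^n}$ and use the inner-product form of the characters, so that for $\mathbf{d}=(d_1,d_2)$ one has $\widehat{1_A}(\mathbf{d})=\sum_{\mathbf{x}\in A}\omega^{\langle\mathbf{x},\mathbf{d}\rangle}$, where $\langle\mathbf{x},\mathbf{d}\rangle=x_1d_1+p^{n-1}x_2d_2\in\Z_{p^n}$. Set $\mathbf{d}_1=(p^{i_1},a)$ and $\mathbf{d}_j=(p^{i_j},0)$ for $2\le j\le s$. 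The base case $s=1$ is immediate: $(p^{i_1},a)\in\mathcal{Z}_A$ means $\sum_{\mathbf{x}\in A}\omega^{\langle\mathbf{x},\mathbf{d}_1\rangle}=0$, so viewing the values $\langle\mathbf{x},\mathbf{d}_1\rangle$ as a (multi)set of integers, Lemma~\ref{fenzhi} gives $p\mid|A|$.

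For the inductive step I would first apply Lemma~\ref{fenzhi} to the vanishing sum coming from $\mathbf{d}_1$ to partition $A$ into blocks $B_1,\dots,B_{|A|/p}$, each of size $p$ and each satisfying $\sum_{\mathbf{x}\in B_k}\omega^{\langle\mathbf{x},\mathbf{d}_1\rangle}=0$. Lemma~\ref{2} then shows that on each block the values $\langle\mathbf{x},\mathbf{d}_1\rangle$ run through $\{r_k+jp^{n-1}:0\le j\le p-1\}$, hence are all congruent mod $p^{n-1}$. The crucial observation is that each remaining character $\mathbf{d}_2,\dots,\mathbf{d}_s$ is constant on every block: if $\mathbf{x},\mathbf{x}'$ lie in the same block, then $\langle\mathbf{x},\mathbf{d}_1\rangle\equiv\langle\mathbf{x}',\mathbf{d}_1\rangle\pmod{p^{n-1}}$, and since $\langle\mathbf{x},\mathbf{d}_1\rangle\equiv x_1p^{i_1}\pmod{p^{n-1}}$ (the term $p^{n-1}x_2a$ vanishes mod $p^{n-1}$), this forces $x_1\equiv x_1'\pmod{p^{\,n-1-i_1}}$. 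Because $i_j\ge i_1+1$, i.e. $n-i_j\le n-1-i_1$, we obtain $x_1\equiv x_1'\pmod{p^{\,n-i_j}}$ and therefore $\langle\mathbf{x},\mathbf{d}_j\rangle=x_1p^{i_j}\equiv x_1'p^{i_j}=\langle\mathbf{x}',\mathbf{d}_j\rangle\pmod{p^n}$.

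Choosing one representative from each block yields a genuine subset $A'\subseteq\Z_{p^n}\times\Z_p$ with $|A'|=|A|/p$. Since $\mathbf{d}_j$ is constant on each block, for $2\le j\le s$ the Fourier sum factors as $\widehat{1_A}(\mathbf{d}_j)=p\,\widehat{1_{A'}}(\mathbf{d}_j)$, whence $\widehat{1_{A'}}(\mathbf{d}_j)=0$, that is $(p^{i_j},0)\in\mathcal{Z}_{A'}$ for $j=2,\dots,s$. These $s-1$ zeros satisfy the hypotheses of the lemma, with $(p^{i_2},0)$ playing the role of the distinguished element and the indices $i_2<\cdots<i_s\le n-1$ still strictly increasing. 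By the induction hypothesis $p^{s-1}\mid|A'|=|A|/p$, and therefore $p^s\mid|A|$, completing the induction.

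I expect the main obstacle to be the constancy claim for $\mathbf{d}_2,\dots,\mathbf{d}_s$ on the blocks: this is precisely where the strict ordering $i_1<i_2<\cdots<i_s$ is essential, and where the exact block structure supplied by Lemma~\ref{2} is needed rather than the mere divisibility of Lemma~\ref{fenzhi}. A minor technical point is the multiset form of Lemma~\ref{fenzhi} when several $\mathbf{x}\in A$ share the same value $\langle\mathbf{x},\mathbf{d}_1\rangle$; this is covered by the same $\mathcal{M}_{p^n}$-module argument that underlies Lemma~\ref{fenzhi}, so no additional work is required.
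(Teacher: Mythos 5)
The paper does not prove this lemma; it is imported verbatim from \cite{Z2022} (Lemma 3.2 there), so there is no in-paper argument to compare yours against. Judged on its own, your proof is correct and uses only tools already available in this paper. The key points all check out: Lemma \ref{fenzhi} (in its multiset form, which follows from the same $\mathcal{M}_{p^n}$ argument behind Lemma \ref{lem1}) splits $A$ into $p$-element blocks on which, by Lemma \ref{2}, the values $\langle\mathbf{x},\mathbf{d}_1\rangle$ exhaust a full fiber $r+\{0,p^{n-1},\dots,(p-1)p^{n-1}\}$; since $\langle\mathbf{x},\mathbf{d}_1\rangle\equiv x_1p^{i_1}\pmod{p^{n-1}}$ (the second coordinate only enters through the multiple of $p^{n-1}$), this forces $x_1\equiv x_1'\pmod{p^{n-1-i_1}}$ within a block, and the strict inequality $i_1<i_j$ is exactly what upgrades this to $\langle\mathbf{x},\mathbf{d}_j\rangle\equiv\langle\mathbf{x}',\mathbf{d}_j\rangle\pmod{p^n}$ for $j\ge 2$. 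The factorization $\widehat{1_A}(\mathbf{d}_j)=p\,\widehat{1_{A'}}(\mathbf{d}_j)$ and the induction with $(p^{i_2},0)$ as the new distinguished zero (taking $a=0$) are then immediate, and $A'$ really is a genuine subset since the blocks partition the honest set $A$. One cosmetic remark: the statement's ``$a\in\Z_{p^n}$'' is surely a typo for $a\in\Z_p$, but as you implicitly note, only $a\bmod p$ enters through the term $p^{n-1}x_2a$, so nothing in your argument depends on this.
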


\subsection{Proof of the theorem \ref{main2}}

In this subsection, we are concerned tiles in $\Z_{p^n}\times \Z_{p}$. Let $\Omega$ be a non-trivial tile in $\Z_{p^n}\times \Z_{p}$, then, due to Lemma \ref{2.6},
we have that $|\Omega|\big|p^{n+1}$. Thus, we can assume that $|\Omega|=p^t$ for some  $1 \leq t\leq n$.

 For a set $A\subset \Z_{p^n}\times \Z_p$, denote
 $${\mathcal Z}_A= \big\{(x,y)\in \Z_{p^n}\times \Z_p:  \widehat{1_A}(x,y) = 0 \big\}.$$

Let $$ \mathcal{I}_A=\big\{0\leq i \leq n-1: (p^i,0)\in {\mathcal Z}_A \big\}.$$

\begin{lem}
Let  $\Omega$ be a tile of  $\Z_{p^{n}}\times \Z_p$ with  $|\Omega|=p^t$  for some  $1 \leq t\leq n$. Then  $ |\mathcal{I}_\Omega|=t-1 $ or $t$.
\end{lem}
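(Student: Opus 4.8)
The plan is to bound $|\mathcal{I}_\Omega|$ both from above and from below, combining the divisibility statement of Lemma~\ref{lem-3.2} with the tiling dichotomy of Lemma~\ref{2.6}. First I would fix a tiling complement $T$, so that $(\Omega,T)$ is a tiling pair. By Lemma~\ref{2.6} we then have $|\Omega|\cdot|T|=|\Z_{p^n}\times\Z_p|=p^{n+1}$, hence $|T|=p^{n+1-t}$, and moreover $\mathcal{Z}_\Omega\cup\mathcal{Z}_T=(\Z_{p^n}\times\Z_p)\setminus\{0\}$. Writing $\mathcal{I}_T=\{0\le i\le n-1:(p^i,0)\in\mathcal{Z}_T\}$ in parallel with $\mathcal{I}_\Omega$, these are the two quantities I will play off against each other.

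For the upper bound, I observe that every element $(p^i,0)$ with $i\in\mathcal{I}_\Omega$ lies in $\mathcal{Z}_\Omega$ by definition of $\mathcal{I}_\Omega$. Listing the indices of $\mathcal{I}_\Omega$ in increasing order and applying Lemma~\ref{lem-3.2} to $A=\Omega$ with $a=0$ yields $p^{|\mathcal{I}_\Omega|}\mid|\Omega|=p^t$, so $|\mathcal{I}_\Omega|\le t$. The identical argument applied to $A=T$ gives $p^{|\mathcal{I}_T|}\mid|T|=p^{n+1-t}$, hence $|\mathcal{I}_T|\le n+1-t$.

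For the lower bound, I use that each $(p^i,0)$ with $0\le i\le n-1$ is a nonzero element of $\Z_{p^n}\times\Z_p$, so by the identity $\mathcal{Z}_\Omega\cup\mathcal{Z}_T=(\Z_{p^n}\times\Z_p)\setminus\{0\}$ it belongs to $\mathcal{Z}_\Omega$ or to $\mathcal{Z}_T$. Consequently $\mathcal{I}_\Omega\cup\mathcal{I}_T=\{0,1,\dots,n-1\}$, whence $|\mathcal{I}_\Omega|+|\mathcal{I}_T|\ge n$. Combining this with $|\mathcal{I}_T|\le n+1-t$ gives $|\mathcal{I}_\Omega|\ge n-|\mathcal{I}_T|\ge t-1$. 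Together with $|\mathcal{I}_\Omega|\le t$ from the previous paragraph, this forces $|\mathcal{I}_\Omega|\in\{t-1,t\}$, as claimed.

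There is no serious obstacle here; the argument is essentially a pigeonhole count once the two lemmas are in place. The only points requiring a moment of care are to confirm that Lemma~\ref{lem-3.2} applies verbatim to $T$ (immediate, since $T$ is itself a tile of the same group, its complement being $\Omega$), and to note that the indices feeding into each application of Lemma~\ref{lem-3.2} are genuinely distinct, so that the exponent of $p$ produced by the divisibility conclusion is exactly $|\mathcal{I}_\Omega|$, respectively $|\mathcal{I}_T|$.
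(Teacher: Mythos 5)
Your argument is correct and is essentially the paper's own proof: both fix a tiling complement $T$ with $|T|=p^{n+1-t}$, use Lemma~\ref{lem-3.2} to get the upper bounds $|\mathcal{I}_\Omega|\le t$ and $|\mathcal{I}_T|\le n+1-t$, and use $\mathcal{Z}_\Omega\cup\mathcal{Z}_T=(\Z_{p^n}\times\Z_p)\setminus\{(0,0)\}$ to get $|\mathcal{I}_\Omega|+|\mathcal{I}_T|\ge n$, which sandwiches $|\mathcal{I}_\Omega|$ between $t-1$ and $t$. No issues.
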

\begin{proof}
Let $T$ be a tiling complement of $\Omega$ in $\Z_{p^n}\times \Z_{p}$.  Then $|T|=p^{n-t+1}$.
By Lemma \ref{lem-3.2}, we have that $|\mathcal{I}_\Omega|\leqslant t $ and $|\mathcal{I}_T|\leqslant n-t+1 $.
On the other hand, since
$$
\mathcal{Z}_\Omega\cup \mathcal{Z}_T= \Z_{p^n}\times \Z_{p} \backslash  \{ (0, 0)\},
$$
we have $|\mathcal{I}_\Omega|+|\mathcal{I}_T| \geqslant n$. Thus, $t-1 \leqslant |\mathcal{I}_\Omega| \leqslant t $, that means $|\mathcal{I}_\Omega|=t$ or $t-1$.
\end{proof}

Define a map $\pi_1$ from $\Z_{p^n}\times \Z_{p}$ to $\Z_{p^n}$ by
\[\pi_1(a, b)=a,  \hbox{ for }  (a, b)\in  \Z_{p^n}\times \Z_{p}.\]
\begin{prop}\label{case t-1}
Let $\Omega$ be a tile in $\Z_{p^n}\times \Z_{p}$ with $|\Omega|=p^t$  and $|\mathcal{I}_\Omega|=t$. Then $\pi_1(\Omega)$ is  a $p$-homogeneous set  in $\Z_{p^n}$  with  $| \pi_1(\Omega)|=p^t $.
\end{prop}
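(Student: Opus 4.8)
The plan is to reduce the statement to Lemma \ref{thm5.4} by viewing $\pi_1(\Omega)$ as a multiset in $\Z_{p^n}$, exactly in the spirit of the proof of Theorem \ref{main1}(1). First I would compute the Fourier transform of $1_\Omega$ at the points $(p^i,0)$. Since the character $\chi_{(p^i,0)}$ ignores the second coordinate, one gets
\[
\widehat{1_\Omega}(p^i,0)=\sum_{(u,v)\in\Omega}e^{2\pi i u p^i/p^n}=\sum_{u\in\Z_{p^n}}m_u\, e^{2\pi i u/p^{n-i}},
\]
where $m_u=|\{v:(u,v)\in\Omega\}|$ is the multiplicity of $u$ in the multiset $\pi_1(\Omega)$. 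Thus $(p^i,0)\in\mathcal Z_\Omega$ is equivalent to $\widehat{1_{\pi_1(\Omega)}}(p^i)=0$, that is, to the vanishing sum $\sum_{c\in\pi_1(\Omega)}e^{2\pi i c\,p^{-(n-i)}}=0$.

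Next I would record the two hypotheses needed for Lemma \ref{thm5.4}. On the one hand, as a multiset $\pi_1(\Omega)$ has total cardinality $|\Omega|=p^t$, so condition~(1) of the lemma holds with $k=t$. On the other hand, the assumption $|\mathcal I_\Omega|=t$ means that there are exactly $t$ indices $i\in\{0,\dots,n-1\}$ with $(p^i,0)\in\mathcal Z_\Omega$; setting $j:=n-i$ converts these into $t$ distinct integers $1\le j_1<\cdots<j_t\le n$ for which $\sum_{c\in\pi_1(\Omega)}e^{2\pi i c\,p^{-j_s}}=0$ for every $s$. This is precisely condition~(2) of the lemma with $k=t$.

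Applying Lemma \ref{thm5.4} then yields that $\pi_1(\Omega)$, as a multiset, has cardinality $p^t$ and is $p$-homogeneous, with tree $\mathcal T_{\pi_1(\Omega)}$ of $\mathcal T_I$-form where $I=\{j_s-1\}=\{n-1-i:i\in\mathcal I_\Omega\}$. The one point deserving care is that a priori $\pi_1$ could collapse distinct elements of $\Omega$; here the $\mathcal T_I$-form tree with $|I|=t$ branched levels has exactly $p^t$ boundary vertices, and since the multiset $\pi_1(\Omega)$ already carries total mass $p^t$ distributed over these vertices, each boundary vertex must have multiplicity exactly one. Hence $\pi_1$ is injective on $\Omega$, all multiplicities equal $1$, and $\pi_1(\Omega)$ is a genuine $p$-homogeneous set of size $p^t$, as claimed. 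The main (and only mildly delicate) obstacle is this multiplicity-one bookkeeping; the remainder is a direct transcription of the zero condition $(p^i,0)\in\mathcal Z_\Omega$ into the hypotheses of Lemma \ref{thm5.4}.
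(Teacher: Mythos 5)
Your proposal is correct and follows essentially the same route as the paper: translate the condition $(p^i,0)\in\mathcal Z_\Omega$ into the vanishing of $\widehat{1_{\pi_1(\Omega)}}$ at $p^i$ for the multiset $\pi_1(\Omega)$ and invoke Lemma~\ref{thm5.4}. Your explicit remark that the $p^t$ boundary vertices of the resulting $\mathcal T_I$-form tree force all multiplicities to equal one is exactly the bookkeeping the paper performs (in its proof of Theorem~\ref{main1}(1)) and is already contained in the conclusion of that lemma.
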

\begin{proof}
Write
$ \Omega=\bigsqcup\limits_{j=0}^{p-1}(  \Omega_{j}  \times \left \{ j \right \})$ with $\Omega_{j} \subset \Z_{p^n}$.
For each $i\in \mathcal{I}_{\Omega}$, we have

\begin{align*}
\widehat{1_\Omega}(p^i, 0)
	&=   \sum_{j=0}^{p-1} \widehat{1_{\Omega_j \times \{j\} }}(p^i,0)\\
        &=  \sum_{j=0}^{p-1} \widehat{1_{\Omega_j }}(p^i)\\
	&=  \widehat{1_{\Omega_0 \cup \Omega_1 \cup  \cdots  \cup \Omega_{p - 1}}}(p^i)\\
	&= 0.
\end{align*}
By Lemmas \ref{thm5.4} and \ref{homogeneous}, $\pi_1(\Omega)= {\Omega_0} \cup {\Omega_1} \cup  \cdots  \cup {\Omega_{p-1}}$
is a $p$-homogeneous set in $\Z_{p^n}$ and $| \pi_1(\Omega)|=p^t$.
\end{proof}

Now assume $|\mathcal{I}_{\Omega}|=t-1$. Let  $\mathcal{J}=\{0,1,\cdots n-1\}\backslash \mathcal{I}_{\Omega}$.

\begin{lem} \label{lem-min}
Let $\Omega$ be a tile in $\Z_{p^n}\times \Z_{p}$ with $|\Omega|=p^t$  and  $|\mathcal{I}_{\Omega}|=t-1$ for some  $1\leq t\leq n$.
If for each $i\in \{0,1, \cdots, n-1\}\setminus \mathcal{I}_{\Omega}$ and $b \in \Z_p^*$,  $(p^{i}, b)\notin  \mathcal{Z}_{\Omega}$, then $(0, 1) \in \mathcal{Z}_{\Omega}$.
\end{lem}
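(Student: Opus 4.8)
The plan is to prove the statement by showing $(0,1)\notin\mathcal{Z}_T$ for the tiling complement $T$, and then invoking the zero-cover $\mathcal{Z}_\Omega\cup\mathcal{Z}_T = (\Z_{p^n}\times\Z_p)\setminus\{(0,0)\}$ from Lemma \ref{2.6} to force $(0,1)\in\mathcal{Z}_\Omega$. Let $T$ be a tiling complement of $\Omega$, so $|T| = p^{n-t+1}$, and recall $\mathcal{J} = \{0,\dots,n-1\}\setminus\mathcal{I}_\Omega$, which has cardinality $n-t+1$. The first observation is that the hypothesis pushes a whole family of zeros onto $T$: for every $i\in\mathcal{J}$ and every $b\in\Z_p$ the point $(p^i,b)$ is nonzero and lies in $\mathcal{Z}_T$. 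Indeed, for $b\neq 0$ this is exactly the assumption $(p^i,b)\notin\mathcal{Z}_\Omega$ combined with the cover, while for $b=0$ it follows from $i\notin\mathcal{I}_\Omega$ (so $(p^i,0)\notin\mathcal{Z}_\Omega$) together with the cover.

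Next I would descend to the horizontal fibers of $T$. Writing $T_\ell = \{x\in\Z_{p^n}:(x,\ell)\in T\}$, the crucial point is that $(p^i,\ell)\in\mathcal{Z}_T$ holds for \emph{every} $\ell\in\Z_p$ (all values of the second coordinate), so Lemma \ref{lem:periodic} applies with $H=\Z_{p^n}$, $S=\Z_p$ and yields $\widehat{1_{T_\ell}}(p^i)=0$ for every $\ell\in\Z_p$ and every $i\in\mathcal{J}$. Thus each fiber $T_\ell$ is a subset of $\Z_{p^n}$ annihilated at the $n-t+1$ distinct frequencies $\{p^i:i\in\mathcal{J}\}$; rewriting $\widehat{1_{T_\ell}}(p^i)=\sum_{c\in T_\ell}e^{2\pi i c p^{-(n-i)}}$, these are precisely $n-t+1$ conditions of the form required by Lemma \ref{thm5.4} with $k=n-t+1$, the exponents $n-i$ being distinct integers in $\{1,\dots,n\}$.

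Then I would apply Lemma \ref{thm5.4} to each nonempty fiber. Since the $T_\ell$ partition $T$, each satisfies $|T_\ell|\le|T| = p^{n-t+1} = p^{k}$, so the hypotheses of Lemma \ref{thm5.4} are met and every nonempty fiber has size exactly $p^{n-t+1} = |T|$. As the fibers are disjoint and their sizes sum to $|T|$, exactly one fiber, say $T_{\ell_0}$, is nonempty, and $T = T_{\ell_0}\times\{\ell_0\}$ lies in a single horizontal slice. Consequently $\widehat{1_T}(0,1) = \sum_{(x,y)\in T}\omega_p^{y} = p^{n-t+1}\omega_p^{\ell_0}\neq 0$, so $(0,1)\notin\mathcal{Z}_T$. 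Since $(0,1)\neq(0,0)$, the cover from Lemma \ref{2.6} gives $(0,1)\in\mathcal{Z}_\Omega$, as desired.

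The main obstacle to anticipate is obtaining the vanishing $\widehat{1_T}(p^i,b)=0$ for \emph{all} $b$ including $b=0$, since it is precisely the full range of $b$ that lets Lemma \ref{lem:periodic} separate the fibers (an inversion of the $\Z_p$-Fourier matrix); losing the $b=0$ case would only give a single linear relation among the $\widehat{1_{T_\ell}}(p^i)$ and would not force each fiber individually to vanish. The secondary point requiring care is the bookkeeping that makes $|T_\ell|\le p^{n-t+1}$ match the exponent $k=n-t+1$ in Lemma \ref{thm5.4}, so that the rigidity conclusion $|T_\ell|=|T|$ becomes available and collapses $T$ into one slice, which is what ultimately makes $\widehat{1_T}(0,1)$ a nonzero single root of unity sum.
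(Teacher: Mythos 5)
Your proof is correct, but it takes a genuinely different route from the paper's. The paper also begins by noting that the hypothesis forces $\{(p^j,b): j\in\mathcal{J},\, b\in\Z_p\}\subseteq\mathcal{Z}_T$, but then argues by contradiction: assuming $(0,1)\in\mathcal{Z}_T$, it builds the set $\Lambda=\{s_n(0,1)+(\sum_{j\in\mathcal{J}}s_jp^j,0)\}$ of $p^{n-t+2}$ points whose pairwise differences all lie in $\mathcal{Z}_T$, so that $\{\chi_\lambda\}_{\lambda\in\Lambda}$ would be an orthogonal family in $L^2(T)$ of size exceeding $\dim L^2(T)=|T|=p^{n-t+1}$. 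Your argument instead runs forward: Lemma \ref{lem:periodic} (the invertibility of the $\Z_p$-Fourier matrix) pushes the zeros $(p^i,b)$, $b\in\Z_p$, down to every fiber $T_\ell$, and the rigidity in Lemma \ref{thm5.4} with $k=n-t+1$ forces each nonempty fiber to have full size $p^{n-t+1}=|T|$, collapsing $T$ into a single slice $T_{\ell_0}\times\{\ell_0\}$ and making $\widehat{1_T}(0,1)$ a nonzero multiple of a root of unity; the cover $\mathcal{Z}_\Omega\cup\mathcal{Z}_T=G\setminus\{(0,0)\}$ then finishes. Your version actually yields strictly more information (the tiling complement is supported on one horizontal slice, hence $(0,b)\in\mathcal{Z}_\Omega$ for all $b\in\Z_p^*$, not just $b=1$), at the cost of invoking the structural Lemma \ref{thm5.4}; the paper's character-packing argument is softer and is reused essentially verbatim in Lemma \ref{lem-min2}, where the slice-collapsing argument would not apply as directly. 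One small point of hygiene: Lemma \ref{thm5.4} as stated does not exclude $C=\emptyset$, so you are right to apply it only to nonempty fibers before summing the fiber sizes.
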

\begin{proof}
Let $T$ be a tiling complement of $\Omega$.   By assumption, we have
\[ \{(p^j, b): j\in \mathcal{J}, b \in \Z_p \} \subset  \mathcal{Z}_{T}.\]
If $(0, 1) \in \mathcal{Z}_T $, we define
 $$
 \Lambda = \big\{s_n (0, 1) + (\sum_{j\in \mathcal{J}} s_j p^{j}, 0): s_j, s_n \in \{0, 1, \cdots,  p-1\}\big\}.
 $$
Then, for any $ \lambda \neq \lambda' \in \Lambda$, we have
$$
\lambda - \lambda' = r_n (0, 1) + (\sum_{j\in \mathcal{J}} r_j p^{j},\ 0),
$$ where $r_j, r_n  \in \{-p + 1, \cdots,\ p - 1\}$.
Notice that
\[ r_n( 0,1) + (\sum_{j\in J} r_j p^{j},\ 0) \sim \begin{cases} (0, 1), & \text{if} \ r_n \neq 0 \ \text{and} \ r_j = 0\ \text{for}\ j \in \mathcal{J}; \\
 (p^l,r_{l}^{-1}r_n), & \text{if} \ r_n \neq 0, r_l \neq 0  \ \text{and} \ r_j = 0\ \text{for} \ 1 \leq j < l; \\
 (p^{l},\ 0), & \text{if} \ r_n = 0,\ r_l \neq 0 \  \text{and} \ r_j = 0\ \text{for} \ 1 \leq j < l. \end{cases} \]
Therefore,  \((\Lambda - \Lambda)\setminus\{(0, 0)\} \subseteq \mathcal{Z}_T \), which implies  the characters  $\{\chi_{\lambda}\}_{\lambda \in \Lambda}$  are orthogonal in $L^{2}(T)$.  However, $|\Lambda |= p^{n-t+2}>|T|$,
which is a contradiction, since the dimension of  $L^2(T)$ is $|T|$. Thus, \(( 0, 1) \in \mathcal{Z}_\Omega\).
\end{proof}

Note that $p^n=0$ in $\Z_{p^n}$. Let $\Omega$ be tile of $\Z_{p^n}\times \Z_p$  with $|\Omega|=p^t$ and $|\mathcal{I}_{\Omega}|=t-1$.
Define
\[\gamma_{\Omega}= \min \big\{j\in  \{0,1,\cdots, n\}\setminus \mathcal{I}_{\Omega}:  (p^j, b)\in \mathcal{Z}_\Omega \hbox{ for some }b \in \Z_{p}^{*}\big\}.\]
By Lemma  \ref{lem-min},  $\gamma_{\Omega}$ is well defined.

\begin{lem}\label{lem-min2}
Let $\Omega$ be a tile in $\Z_{p^n}\times \Z_{p}$ with $|\Omega|=p^t$  and  $|\mathcal{I}_{\Omega}|=t-1$ for some  $1\leq t\leq n$.
Then for any tiling complement $T$, we have  $(p^i, b )\notin \mathcal{Z}_T $
for all $i\in \mathcal{I}_{\Omega}$ with $i<\gamma_{\Omega}$, $b \in \{ 0,1, \cdots, p-1 \}.$ Consequently,  $(p^i, b )\in \mathcal{Z}_{\Omega}$.
\end{lem}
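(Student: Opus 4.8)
The plan is to argue by contradiction using the divisibility bound of Lemma \ref{lem-3.2} applied to the complement $T$, after first pinning down the coarse structure of $\mathcal{Z}_T$. Since $(\Omega,T)$ is a tiling pair with $|\Omega|=p^t$, we have $|T|=p^{n-t+1}$, and by Lemma \ref{2.6} the zero sets satisfy $\mathcal{Z}_\Omega\cup\mathcal{Z}_T=(\Z_{p^n}\times\Z_p)\setminus\{(0,0)\}$. As established above, $|\mathcal{I}_T|=n-t+1$ and $\mathcal{I}_\Omega\sqcup\mathcal{I}_T=\{0,1,\dots,n-1\}$, so $\mathcal{I}_T=\mathcal{J}$. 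The case $b=0$ is then immediate: $(p^i,0)\in\mathcal{Z}_T$ would force $i\in\mathcal{I}_T$, contradicting $i\in\mathcal{I}_\Omega$ and the disjointness. The final assertion $(p^i,b)\in\mathcal{Z}_\Omega$ is also immediate from the covering property once $(p^i,b)\notin\mathcal{Z}_T$ is known (note $(p^i,b)\neq(0,0)$ since $i\le n-1$). Thus everything reduces to showing $(p^i,b)\notin\mathcal{Z}_T$ for $b\in\Z_p^\ast$ and $i\in\mathcal{I}_\Omega$ with $i<\gamma_\Omega$.

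Next I would extract the key extra zeros of $T$ coming from the minimality of $\gamma_\Omega$. For every $j\in\mathcal{J}$ with $j<\gamma_\Omega$, the definition of $\gamma_\Omega$ forces $(p^j,b')\notin\mathcal{Z}_\Omega$ for all $b'\in\Z_p^\ast$; combined with $(p^j,0)\in\mathcal{Z}_T$ (as $j\in\mathcal{I}_T$) and the covering property, this yields the whole column $(p^j,b')\in\mathcal{Z}_T$ for every $b'\in\Z_p$. By Lemma \ref{lem:periodic} such a full column descends to every fibre, i.e. $p^j\in\mathcal{Z}_{T_{y}}$ for all $y\in\Z_p$, where $T_y=\{x:(x,y)\in T\}$. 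This order-independent information at the ``low'' levels $j<\gamma_\Omega$ is precisely what the naive application of Lemma \ref{lem-3.2} cannot see.

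Now suppose, for contradiction, that $(p^i,b)\in\mathcal{Z}_T$ with $b\in\Z_p^\ast$, $i\in\mathcal{I}_\Omega$ and $i<\gamma_\Omega$. I would feed into a strengthened, order-free version of the divisibility argument the following zeros of $T$: the element $(p^i,b)$ at level $i$; the full columns at every $j\in\mathcal{J}$ with $j<\gamma_\Omega$ (equivalently, $p^j\in\mathcal{Z}_{T_y}$ on each fibre); and the plain zeros $(p^j,0)$ at every $j\in\mathcal{J}$ with $j\ge\gamma_\Omega$. Counting levels, the set $\mathcal{J}$ together with $\{i\}$ contributes $n-t+2$ distinct levels, and I claim each contributes one factor of $p$ to $|T|$, giving $p^{n-t+2}\mid|T|=p^{n-t+1}$, the desired contradiction. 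The factors coming from levels of $\mathcal{J}$ lying above $i$, together with the base element $(p^i,b)$, are supplied directly by Lemma \ref{lem-3.2} (with $(p^i,b)$ as the lowest, arbitrary-second-coordinate element); the additional factors must come from the full-column levels of $\mathcal{J}$ lying below $i$.

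The main obstacle is precisely this ordering constraint in Lemma \ref{lem-3.2}: the single element allowed to have a nonzero second coordinate must sit at the lowest level, so a full column situated \emph{below} $i$ cannot be inserted into the chain naively. To overcome it I would prove an auxiliary divisibility statement: if $A\subseteq\Z_{p^n}\times\Z_p$ has full columns $(p^{j},\cdot)\subseteq\mathcal{Z}_A$ at the $m$ levels of a set $L$ and also $(p^{i},b)\in\mathcal{Z}_A$ for a single $i\notin L$, then $p^{m+1}\mid|A|$, \emph{independently of the relative order of $i$ and $L$}. The mechanism is to split $A$ into $p$ equal pieces using the equidistribution of Lemma \ref{52} at level $i$, to observe that a full column (being a zero in every $\Z_p$-direction) restricts to a full column on each resulting piece, and to iterate using the one-dimensional decomposition of Lemma \ref{fenzhi} on the fibres $A_y$ furnished by Lemma \ref{lem:periodic}. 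Establishing that full-column zeros genuinely survive the level-$i$ splitting, so that they may be counted regardless of position, is the crux of the whole argument; once it is in place the contradiction $p^{n-t+2}\mid|T|$ follows at once.
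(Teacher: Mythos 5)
Your reduction is set up correctly: $|T|=p^{n-t+1}$, the identity $\mathcal{I}_T=\mathcal{J}$ with $\mathcal{I}_\Omega\cap\mathcal{I}_T=\emptyset$ disposes of the case $b=0$ and of the final assertion, and the minimality of $\gamma_\Omega$ combined with the covering property does give the full columns $\{(p^j,b'):b'\in\Z_p\}\subseteq\mathcal{Z}_T$ for every $j\in\mathcal{J}$ with $j<\gamma_\Omega$. The target contradiction $p^{\,n-t+2}\mid |T|$ is also the right one. The gap is that the entire argument then rests on your ``order-free'' divisibility lemma, which you state but do not prove. You are right that Lemma \ref{lem-3.2} cannot be applied as is (the arbitrary second coordinate must sit at the lowest level), but your substitute requires showing that after collapsing a full column at a level $l<i$ into $p$-element blocks via Lemmas \ref{lem:periodic} and \ref{fenzhi}, or after splitting $T$ along the planes $H((p^i,b),k)$, all remaining zero conditions descend to the quotient so that the count can be iterated. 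That descent is exactly the hard content --- it amounts to redoing the inductive mechanism behind Lemma \ref{thm5.4} in the two-dimensional setting --- and you explicitly leave it as ``the crux,'' so the proposal as written does not close. (The claim you need does appear to be true: a full column at level $l<i$ forces each fibre $T_y$ to split into $p$-element blocks whose members are congruent modulo $p^{n-i}$, hence each block lies in a single plane $H((p^i,b),k)$ and the column condition survives the splitting; but this must be proved, not asserted.)

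The paper sidesteps the ordering obstruction entirely. Using the same inventory of zeros of $\widehat{1_T}$ that you collect --- full columns at the levels $j\in\mathcal{J}$ with $j<\gamma_\Omega$, plain zeros $(p^j,0)$ at the remaining $j\in\mathcal{J}$, and the hypothetical $(p^i,b)$ --- it forms the explicit set $\Lambda=\big\{(\sum_{j\in\mathcal{J}}s_jp^j,0)+s_i(p^i,b)\big\}$ of cardinality $p^{\,n-t+2}$ and checks, via the equivalence $\sim$ of Lemma \ref{52}, that every nonzero difference $\lambda-\lambda'$ reduces to one of these zeros (the level of the leading term decides which, and the full columns absorb the case where that level lies in $\mathcal{J}$ below $i$). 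Then $\{\chi_\lambda\}_{\lambda\in\Lambda}$ is an orthogonal family in $L^2(T)$ of size exceeding $\dim L^2(T)=|T|$, a contradiction. This packing argument is insensitive to the relative order of the levels, which is precisely what your route has to fight for; either supply your auxiliary lemma in full, or switch to the orthogonality argument, which is the shorter path.
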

\begin{proof}
 Let $T$ be a tiling complement of $\Omega$ and  $\mathcal{J}= \{0,1, \cdots, n-1\}\setminus \mathcal{I}_{\Omega}$.
 Assume  that  there is an $i\in \mathcal{I}_{\Omega}$ with $i< \gamma_{\Omega}$ such that $(p^i, b)\in \mathcal{Z}_T$ for $b\in \Z_p$.
  Define
$$
\Lambda=\Big\{(\sum \limits_{j\in \mathcal{J}}s_jp^j, 0)+s_i (p^i,b): s_i, s_j \in \{0, 1, \cdots, p-1\} \Big\}.
$$
For any $\lambda\neq\lambda' \in \Lambda $, we have
$$\lambda-\lambda' = (\sum \limits_{j\in \mathcal{J}} r_j p^j,0)+r_i (p^i,b),
$$
where $r_i, r_j \in \{-p+1, \cdots, p-1\}$. Observe that
\[\sum_{j\in  \mathcal{J}} r_j (p^{j}, 0) +~r_i( p^{i},b) \sim ( p^{i},b) \]
if $\ r_i \neq 0 $ and  $r_j = 0$ for $j < i$. And
 \[\sum_{j\in  \mathcal{J}} r_j (p^{j}, 0) +~r_i( p^{i},b) \sim ( p^{l} ,r_{l}^{-1} r_i b) \]
  if $l$ is the minimal number such that $r_l\neq 0$ and $l<i$.
Therefore \[(\Lambda-\Lambda )\backslash \{(0, 0)\} \subseteq \mathcal{Z}_T,\]
which implies  the characters  $\{\chi_{\lambda}\}_{\lambda \in \Lambda}$  are orthogonal in $L^{2}(T)$.
 However, $|\Lambda |= p^{n-t+2}>|T|$,
which is a contradiction, since the dimension of  $L^{2}(T)$ is $|T|$. 
\end{proof}

We distinguish two cases:
\[ (1) \ \gamma_{\Omega}=n,  \quad (2) \  \gamma_{\Omega}<n.\]

Firstly, we deal with the case $\gamma_{\Omega}=n$.
\begin{prop}\label{prop4.6}
Let $\Omega$ be tile of $\Z_{p^n}\times \Z_p$  with $|\Omega|=p^t$, $|\mathcal{I}_{\Omega}|=t-1$ and $\gamma_{\Omega}=n.$
Then  for each $b\in \Z_p$, the set $\Omega_b=\{x\in \Z_{p^n}: (x, b)\in \Omega \}$ is a  $p$-homogeneous subset of $\Z_{p^n}$ with branched level set $n-1-\mathcal{I}_{\Omega}$.
\end{prop}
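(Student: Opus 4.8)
The plan is to reduce the statement to a fibrewise application of Lemma~\ref{thm5.4}, the two intermediate goals being: (i) that $\widehat{1_{\Omega_b}}(p^i)=0$ for every $i\in\mathcal{I}_\Omega$ and every $b\in\Z_p$; and (ii) that $|\Omega_b|=p^{t-1}$ for every $b$. Write $\Omega=\bigsqcup_{b=0}^{p-1}(\Omega_b\times\{b\})$ throughout.

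First I would collect the zeros furnished by the hypothesis $\gamma_{\Omega}=n$. Every $i\in\mathcal{I}_\Omega$ satisfies $i\le n-1<n=\gamma_{\Omega}$, so Lemma~\ref{lem-min2} gives $(p^i,b)\in\mathcal{Z}_\Omega$ for all $i\in\mathcal{I}_\Omega$ and all $b\in\{0,1,\dots,p-1\}$. Thus for each fixed $i\in\mathcal{I}_\Omega$ the entire column $\{(p^i,b):b\in\Z_p\}$ lies in $\mathcal{Z}_\Omega$. Applying Lemma~\ref{lem:periodic} with $H=\Z_{p^n}$, $S=\Z_p$, $A=\Omega$ and $h=p^i$ then yields $p^i\in\mathcal{Z}_{\Omega_b}$, that is $\widehat{1_{\Omega_b}}(p^i)=0$, for every $b$ and every $i\in\mathcal{I}_\Omega$. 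This settles goal (i), and it provides exactly $|\mathcal{I}_\Omega|=t-1$ vanishing frequencies for each fibre.

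Goal (ii) is where the real content lies, and it is the step I expect to be the main obstacle: divisibility alone is not enough. Indeed, the $t-1$ vanishing frequencies give only $p^{t-1}\mid|\Omega_b|$ (via Lemma~\ref{lem5.1}(3)), and together with $\sum_b|\Omega_b|=|\Omega|=p^t$ this does not prevent the mass from concentrating on fewer fibres. The extra ingredient is the vertical zero $(0,1)$: the hypothesis $\gamma_{\Omega}=n$ is precisely the hypothesis of Lemma~\ref{lem-min}, so $(0,1)\in\mathcal{Z}_\Omega$, and by Lemma~\ref{unit} (taking $r=b$, which is coprime to $|G|=p^{n+1}$) also $(0,b)\in\mathcal{Z}_\Omega$ for every $b\in\Z_p^{\ast}$. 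Since $\widehat{1_\Omega}(0,b)=\sum_{y=0}^{p-1}|\Omega_y|\,e^{2\pi i yb/p}=0$ for all $b\neq 0$, the discrete Fourier transform over $\Z_p$ of the vector $(|\Omega_0|,\dots,|\Omega_{p-1}|)$ vanishes off the origin, which forces that vector to be constant. Hence all $|\Omega_b|$ are equal, and as they sum to $p^t$ over the $p$ fibres, each equals $p^{t-1}$.

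Finally I would apply Lemma~\ref{thm5.4} to each set $C=\Omega_b$ with $k=t-1$. Condition (1) holds with equality, since $|\Omega_b|=p^{t-1}=p^{k}$; condition (2) holds with the $t-1$ exponents $j=n-i$ for $i\in\mathcal{I}_\Omega$, because $\widehat{1_{\Omega_b}}(p^i)=\sum_{x\in\Omega_b}e^{2\pi i x p^{-(n-i)}}=0$ by goal (i), and these $n-i$ are distinct integers in $\{1,\dots,n\}$. The lemma then asserts that $\Omega_b$ is $p$-homogeneous with branched level set $\{(n-i)-1:i\in\mathcal{I}_\Omega\}=n-1-\mathcal{I}_\Omega$, which is the claim. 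The only point requiring care is the order-of-operations in invoking Lemmas~\ref{lem-min2} and~\ref{lem-min}; once the two vertical/horizontal families of zeros are in hand, the equidistribution argument and Lemma~\ref{thm5.4} finish the proof mechanically.
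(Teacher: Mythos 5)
Your proof is correct, but it reaches the conclusion by a genuinely different route than the paper. Both arguments start from the same stock of zeros: you and the paper each use Lemma~\ref{lem-min2} to get $(p^i,b)\in\mathcal{Z}_\Omega$ for all $i\in\mathcal{I}_\Omega$ and all $b$, and Lemma~\ref{lem-min} (equivalently, the definition of $\gamma_\Omega=n$ together with Lemma~\ref{unit}) to get $(0,b)\in\mathcal{Z}_\Omega$ for $b\neq 0$. From there the paper takes the tiling-pair road: it picks a $p$-homogeneous $T_0\subset\Z_{p^n}$ with $p^j\in\mathcal{Z}_{T_0}$ for $j\in\mathcal{J}$, checks that $\mathcal{Z}_\Omega\cup\mathcal{Z}_{T_0\times\{0\}}$ covers all nonzero characters so that $(\Omega,T_0\times\{0\})$ is a tiling pair by Lemma~\ref{2.6}(5), and then observes that this tiling restricts to a tiling $(\Omega_b,T_0)$ of each fibre, whence $p$-homogeneity by Lemma~\ref{homogeneous}. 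You instead work entirely inside the fibres: Lemma~\ref{lem:periodic} transports the zeros $(p^i,b)$ down to $p^i\in\mathcal{Z}_{\Omega_b}$, the vertical zeros $(0,b)$ force the fibre cardinalities to be equidistributed (this is the step the paper gets for free from the tiling-pair structure, and you are right that divisibility alone would not rule out mass concentrating on one fibre), and Lemma~\ref{thm5.4} then delivers homogeneity with the stated branched level set. Your version is somewhat more self-contained --- the paper's phrase ``by calculating the zero set $\mathcal{Z}_T$'' leaves to the reader exactly the covering verification that your explicit fibrewise computation replaces --- while the paper's version exhibits an actual tiling complement, which is reused elsewhere in the article. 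Both are valid.
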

\begin{proof}
By Lemmas  \ref{lem-min}  and  \ref{lem-min2}, we have
\[  \left\{(0, b):b \in \Z_p \right\}\subset \mathcal{Z}_{\Omega}\]
and
\[ \left\{(p^i,b): i \in \mathcal{I}_{\Omega}, b \in \Z_p \right\}\subset \mathcal{Z}_{\Omega}.\]

Let $T_0$ be a subset of $\Z_{p^n}$ with  $p^j\in \mathcal{Z}_{T_0}$ for $j \in \mathcal{J}= \{0,\cdots, n-1\} \setminus  \mathcal{I}_{\Omega}$.
Take $T=T_0\times\{0\}$, which is a subset of $\Z_{p^n}\times \Z_p$. By calculating the zero set $\mathcal{Z}_{T}$, it follows that $(\Omega,T)$ is a tiling pair of $\Z_{p^n}\times \Z_p$.  Hence, for each $b\in \Z_p$, $(\Omega_b, T_0)$ is a tiling pair of $\Z_{p^n}$, which implies
$\Omega_b=\{x\in \Z_{p^n}: (x, b)\in \Omega \}$ is a  $p$-homogeneous subset of $\Z_{p^n}$ with branched level set $n-1-\mathcal{I}_{\Omega}$.
\end{proof}

Now, we shall deal the case $\gamma_{\Omega}<n$.

\begin{prop}\label{prop4.7}
Let $\Omega$ be a tile in $\Z_{p^n}\times \Z_p$ with $\gamma_{\Omega}<n$, $|\Omega|=p^t$  and
 $|\mathcal{I}_{\Omega}|=t-1$ for some $1 \leq t \leq n-1$. Assume that $(p^{\gamma_{\Omega}}, \alpha)\in \mathcal{Z}_{\Omega}$ and  let $\mathcal{I}=\mathcal{I}_{\Omega}\cup \{\gamma_{\Omega}\}$.
   Then
the set \[\widetilde{\Omega} = \left\{ x +\alpha y p^{n-\gamma_{\Omega}-1}: (x, y) \in \Omega\right\}\] is a $p$-homogeneous set in $\Z_{p^n}$.
\end{prop}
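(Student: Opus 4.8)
The plan is to realize $\widetilde\Omega$ as a $p$-homogeneous set by checking the hypotheses of Lemma \ref{thm5.4} directly. Since $|\mathcal{I}_\Omega| = t-1$ and $\gamma_{\Omega} \in \{0,\dots,n-1\}\setminus\mathcal{I}_\Omega$ (using $\gamma_{\Omega} < n$), the set $\mathcal{I} = \mathcal{I}_\Omega \cup \{\gamma_{\Omega}\}$ has exactly $t$ elements, all in $\{0,\dots,n-1\}$. Viewing $\widetilde\Omega$ as a multiset of size $|\Omega| = p^t$, it then suffices to show $\widehat{1_{\widetilde\Omega}}(p^i) = 0$ for every $i \in \mathcal{I}$; Lemma \ref{thm5.4} will apply with $k = t$ and exponents $j_s = n-i$ for $i \in \mathcal{I}$, forcing $|\widetilde\Omega| = p^t$ and $p$-homogeneity with branched level set $\{n-1-i : i \in \mathcal{I}\}$.

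The core computation expands the Fourier coefficient layer by layer. Writing $\Omega = \bigsqcup_{y=0}^{p-1}(\Omega_y \times \{y\})$, a direct expansion gives $\widehat{1_{\widetilde\Omega}}(p^i) = \sum_{y=0}^{p-1} e^{2\pi i \alpha y p^{\,n-\gamma_{\Omega}-1+i}/p^n}\,\widehat{1_{\Omega_y}}(p^i)$, where the inner factor is the Fourier coefficient of $\Omega_y$ in $\Z_{p^n}$. For $i \ge \gamma_{\Omega}$ the phase $e^{2\pi i \alpha y p^{\,n-\gamma_{\Omega}-1+i}/p^n}$ equals $e^{2\pi i \alpha y p^{\,i-\gamma_{\Omega}}/p}$, so the whole sum collapses to $\widehat{1_\Omega}(p^i,\ \alpha p^{\,i-\gamma_{\Omega}} \bmod p)$. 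When $i = \gamma_{\Omega}$ this is precisely $\widehat{1_\Omega}(p^{\gamma_{\Omega}}, \alpha)$, which vanishes by hypothesis; when $i \in \mathcal{I}_\Omega$ with $i > \gamma_{\Omega}$ the second coordinate is $0$ and the value is $\widehat{1_\Omega}(p^i,0)=0$ since $i \in \mathcal{I}_\Omega$.

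The remaining and genuinely delicate case is $i \in \mathcal{I}_\Omega$ with $i < \gamma_{\Omega}$: here the phase factors do not simplify and $\widehat{1_{\widetilde\Omega}}(p^i)$ is not a single Fourier coefficient of $\Omega$. The key is to pass to the individual layers. I would invoke Lemma \ref{lem-min2}, which for such $i$ guarantees $(p^i, b) \in \mathcal{Z}_\Omega$ for \emph{every} $b \in \{0,1,\dots,p-1\}$. Feeding this into Lemma \ref{lem:periodic} (with $H = \Z_{p^n}$ and $S = \Z_p$) yields $\widehat{1_{\Omega_y}}(p^i) = 0$ for each layer $y$ separately. Hence every summand in the layer-by-layer expansion vanishes, so $\widehat{1_{\widetilde\Omega}}(p^i) = 0$ irrespective of the phases. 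This is the crux of the argument, and the only place where both the minimality defining $\gamma_{\Omega}$ (so that $i < \gamma_{\Omega}$ forces uniform vanishing across all of $\Z_p$) and the layerwise descent are essential.

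Having established $\widehat{1_{\widetilde\Omega}}(p^i) = 0$ for all $i \in \mathcal{I}$, I would close by applying Lemma \ref{thm5.4}. The $t$ equalities $\sum_{c\in\widetilde\Omega} e^{2\pi i c p^{-(n-i)}} = \widehat{1_{\widetilde\Omega}}(p^i) = 0$, with distinct exponents $n-i \in \{1,\dots,n\}$, supply condition (2), while $|\widetilde\Omega| \le p^t$ supplies condition (1). The lemma then returns $|\widetilde\Omega| = p^t$ (so the defining map $(x,y)\mapsto x+\alpha y p^{n-\gamma_{\Omega}-1}$ is injective on $\Omega$ and $\widetilde\Omega$ is a genuine set) together with its $p$-homogeneity, completing the proof.
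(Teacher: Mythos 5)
Your proposal is correct and follows essentially the same route as the paper: the same three-way case split on $i>\gamma_{\Omega}$, $i=\gamma_{\Omega}$, $i<\gamma_{\Omega}$, the same use of Lemma \ref{lem:periodic} to get layerwise vanishing in the delicate case $i<\gamma_{\Omega}$, and the same final appeal to Lemma \ref{thm5.4}. The only (cosmetic) difference is that where the paper rewrites the sum as $\sum_b \widehat{1_{\Omega_b+\alpha b p^{n+j-1-\gamma_{\Omega}}}}(p^j)$ and invokes translation invariance, you observe directly that each $\widehat{1_{\Omega_y}}(p^i)=0$ kills the phase-weighted sum; your explicit citation of Lemma \ref{lem-min2} there is if anything slightly more precise than the paper's ``by the definition of $\gamma_{\Omega}$.''
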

\begin{proof}


Consider the multi-set \[\widetilde{\Omega} = \left\{ x +\alpha y p^{n-\gamma_{\Omega}-1}: (x, y) \in \Omega\right\} \subset  \Z_{p^n}.\]
It is obvious that $|\widetilde{\Omega}|=p^t$.  Now, we  shall show that   $p^{j}\in \mathcal{Z}_{\widetilde{\Omega} }$ for $j \in \mathcal{I}$.

For $j \in \mathcal{I}_{\Omega}$ with $j> \gamma_{\Omega}$,   we have
\[ \widehat{1_{\widetilde{\Omega}}}(p^{j})=\sum_{(x,y)\in \Omega}e^{2 \pi i\frac{ (x +\alpha y p^{n-\gamma_{\Omega}-1})p^{j}}{p^n}}=\sum_{(x,y)\in \Omega}e^{2 \pi i\frac{ x p^{j}}{p^n}}=\widehat{1_\Omega}(p^{j}, 0)=0.\]

  For $j=\gamma_{\Omega}$, we have
  \[ \widehat{1_{\widetilde{\Omega}}}(p^{\gamma_{\Omega}})=\sum_{(x,y)\in \Omega}e^{2 \pi i\frac{ (x +\alpha y p^{n-\gamma_{\Omega}-1})p^{\gamma_{\Omega}}}{p^n}}=\sum_{(x,y)\in \Omega}e^{2 \pi i\frac{ x p^{\gamma_{\Omega}}+\alpha y p^{n-1}}{p^n}}=\widehat{1_\Omega}(p^{\gamma_{\Omega}}, \alpha)=0.\]

  Now we consider the case $j<\gamma_{\Omega}$.  By the definition of  $\gamma_{\Omega}$,  we have  $(p^j, b)\in \mathcal{Z}_{\Omega}$ for each $b \in \Z_p$. For each $b\in \Z_p$, let $\Omega_b=\{x\in \Z_{p^n}: (x,b)\in \Z_{p^n}\times \Z_p\}$.  By Lemma \ref{lem:periodic}, we have $p^j \in \mathcal{Z}_{\Omega_b}$ for each $b\in \Z_p$. By the definition of $\widetilde\Omega$, we have
  \begin{align*}
    \widehat{1_{\widetilde{\Omega}}}(p^{j})&=\sum_{(x,y)\in \Omega}e^{2 \pi i\frac{ (x +\alpha y p^{n-\gamma_{\Omega}-1})p^{j}}{p^n}} \\
    &=\sum_{b\in \Z_p}\sum_{x\in \Omega_b}e^{2 \pi i\frac{ x p^{j}+\alpha b p^{n+j-1-\gamma_{\Omega}}}{p^n}} \\
    &= \sum_{b\in \Z_p}\widehat{1_{\Omega_b+\alpha b p^{n+j-1-\gamma_{\Omega}}}}(p^j).
   \end{align*}
By Lemma \ref{lem-traninv}, we have $\widehat{1_{\widetilde{\Omega}}}(p^{j})=0$. Hence,  $\{p^j: j \in \mathcal{I}\} \subset \mathcal{Z}_{\widetilde{\Omega}}$.
Note that $|\mathcal{I}|=t$ and $|\widetilde{\Omega}|=p^t$.  By Lemma  \ref{thm5.4},  $\widetilde{\Omega}$  is $p$-homogeneous with branch level set  $n-1-\mathcal{I}.$
\end{proof}
\begin{lem}\label{lem4.9}
Let $\Omega$ be a tile in $\Z_{p^n}\times \Z_p$. Assume that $\mathcal{I}_{\Omega}=\{i_1,i_2, \cdots, i_{t-1}\}$ with  $i_1< \cdots<i_s<\gamma_{\Omega}<i_{s+1}<\cdots <i_{t-1}$. Then for each $(x,y)\in \Omega$, the sets \[\Omega_{x,y}:=\{x^{\prime }\in\Z_{p^n}: (x^{\prime},y)\in \Omega  \text{ and } x^{\prime}\equiv x \!\!\mod p^{n-\gamma_{\Omega}-1}  \
\}\]  is  $p$-homogenous with  the branched level set $n-1-\{i_1,i_2 \cdots,i_s\}$.
\end{lem}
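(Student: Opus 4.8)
The plan is to transport the statement onto the $p$-homogeneous set $\widetilde\Omega$ of Proposition~\ref{prop4.7} and to read off the fibrewise structure from it. Write $\gamma:=\gamma_\Omega$, fix $(x,y)\in\Omega$, set $v:=x\bmod p^{\,n-\gamma-1}$, and let $\alpha$ be such that $(p^{\gamma},\alpha)\in\mathcal Z_\Omega$. Since $\widetilde\Omega$ is $p$-homogeneous (hence a genuine set) of cardinality $p^{t}=|\Omega|$, the map $\phi(z,w)=z+\alpha w p^{\,n-\gamma-1}$ is a bijection $\Omega\to\widetilde\Omega$. For a \emph{fixed} second coordinate $w$ the map $\phi(\cdot,w)$ is the translation by the constant $c_w:=\alpha w p^{\,n-\gamma-1}$, so by Lemma~\ref{lem-traninv} and Lemma~\ref{thm5.4} the set $\Omega_{x,y}$ is $p$-homogeneous with a prescribed branched level set precisely when its translate $\Omega_{x,y}+c_y$ is. As $c_w$ is a multiple of $p^{\,n-\gamma-1}$, translation preserves residues modulo $p^{\,n-\gamma-1}$, and with $\Omega_{v,w}:=\{z:(z,w)\in\Omega,\ z\equiv v\ (\mathrm{mod}\ p^{\,n-\gamma-1})\}$ I obtain a partition $L_v=\bigsqcup_{w\in\Z_p}(\Omega_{v,w}+c_w)$ of the subtree $L_v:=\{\widetilde z\in\widetilde\Omega:\widetilde z\equiv v\ (\mathrm{mod}\ p^{\,n-\gamma-1})\}$ hanging below the level-$(n-\gamma-1)$ vertex $v$. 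Since $n-1-\gamma\in n-1-\mathcal I$ is a branched level of $\widetilde\Omega$, this subtree has exactly $|L_v|=p^{s+1}$ leaves and branches precisely at the levels $\{n-\gamma-1\}\cup\{n-1-i_k:k\le s\}$.

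The first substantive step is to pin down the deep branched levels of each fibre. For $k\le s$ we have $i_k<\gamma$, so by Lemma~\ref{lem-min2} the points $(p^{i_k},b)$ lie in $\mathcal Z_\Omega$ for \emph{every} $b\in\Z_p$; Lemma~\ref{lem:periodic} then gives $\widehat{1_{\Omega_w}}(p^{i_k})=0$ for each fibre $\Omega_w=\{z:(z,w)\in\Omega\}$. Consequently the coefficient vector of $\Omega_y$ modulo $p^{\,n-i_k}$ lies in $\mathcal M_{p^{\,n-i_k}}$, and Lemma~\ref{lem1} forces it to be constant along cosets of $p^{\,n-i_k-1}\Z$. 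Since $n-i_k-1\ge n-\gamma>n-\gamma-1$, the digit at position $n-i_k-1$ is unconstrained by the congruence $z\equiv x\ (\mathrm{mod}\ p^{\,n-\gamma-1})$; summing $\omega_{p^{\,n-i_k}}^{z}$ over that free digit inside the class $v$ therefore produces a factor $\sum_{l=0}^{p-1}\omega_p^{l}=0$, whence $\widehat{1_{\Omega_{x,y}}}(p^{i_k})=0$ for all $k\le s$. By Lemma~\ref{lem5.1}(3) this yields $p^{s}\mid|\Omega_{x,y}|$, and these $s$ vanishing frequencies are exactly the candidate branched levels $n-1-\{i_1,\dots,i_s\}$.

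It remains to prove the exact size $|\Omega_{x,y}|=p^{s}$; equivalently, that in each occupied class $v$ there are exactly $p$ nonempty fibres, each carried by $\phi$ onto a single depth-$(n-\gamma)$ subtree $L_w:=\{\widetilde z\in\widetilde\Omega:\widetilde z\equiv w\ (\mathrm{mod}\ p^{\,n-\gamma})\}$. Writing $N_w:=|\Omega_{v,w}|/p^{s}$, the two facts $\sum_w|\Omega_{v,w}|=|L_v|=p^{s+1}$ and $p^{s}\mid|\Omega_{v,w}|$ reduce the claim to showing $N_w=1$ for every $w$, i.e.\ that the digit of $z$ at position $n-\gamma-1$ is \emph{constant} on each $\Omega_{v,w}$ (no fibre branches at the top level $n-\gamma-1$, nor at an intermediate level $n-1-j$ with $j\in\mathcal J$, $j<\gamma$). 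Here I would combine three ingredients: the equidistribution furnished by $(p^{\gamma},\alpha)\in\mathcal Z_\Omega$ via Lemma~\ref{52}, which says that the value $\big(\text{digit}_{n-\gamma-1}(z)+\alpha w\big)\bmod p$ is equidistributed over $\{(z,w)\in\Omega:z\equiv v\}$, each value occurring $p^{s}$ times; the per-fibre balance at the deep levels established above; and the complement's zero set $\{(p^{j},b):j\in\mathcal J,\ j<\gamma,\ b\in\Z_p\}\cup\{(p^{\gamma},0)\}\subseteq\mathcal Z_T$, which I would exploit through the orthogonality/dimension count in $L^{2}(T)$ used in Lemmas~\ref{lem-min} and~\ref{lem-min2}: a fibre branching at the top or at such an intermediate level should enlarge the orthogonal family $\Lambda$ beyond $|T|$, a contradiction.

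The hard part is precisely this last step. The obstruction is that $\phi$ is only affine, not linear: adding $\alpha w p^{\,n-\gamma-1}$ produces carries into the positions $\ge n-\gamma$, so a fibre $\Omega_{v,w}$ is \emph{not} carried onto a clean subtree of $\widetilde\Omega$, and the global (non-)vanishing of $\widehat{1_\Omega}(p^{j},\cdot)$ at an intermediate level does not by itself preclude per-fibre branching, since branchings in different fibres may cancel in the summed transform. I expect the carry coupling to be tamed by arguing fibrewise—where $\phi(\cdot,w)$ is a genuine translation, hence transparent for $p$-homogeneity—and by playing the non-vanishing $\widehat{1_\Omega}(p^{\gamma},0)\neq0$ (valid because $\gamma\notin\mathcal I_\Omega$) against the balance relation coming from $(p^{\gamma},\alpha)\in\mathcal Z_\Omega$. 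Once $N_w\equiv1$ is secured, each $\Omega_{x,y}+c_y$ equals a single $L_w$, which is $p$-homogeneous with branched level set $\{n-1-i_k:k\le s\}=n-1-\{i_1,\dots,i_s\}$ by Lemma~\ref{thm5.4}; transporting back by the translation $-c_y$ via Lemma~\ref{lem-traninv} then finishes the proof.
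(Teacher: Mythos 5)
Your first half is sound and runs parallel to the paper's argument: from Lemma~\ref{lem-min2} and Lemma~\ref{lem:periodic} you get $p^{i_k}\in\mathcal{Z}_{\Omega_w}$ for all $k\le s$ and all fibres $\Omega_w$, and the rigidity of vanishing sums of $p^{n-i_k}$-th roots of unity (Lemma~\ref{lem1}) lets you localize these zeros to the sub-fibre $\Omega_{x,y}$, giving $p^{s}\mid|\Omega_{x,y}|$ and the $s$ candidate branched levels. This is essentially the paper's inductive decomposition, via Lemma~\ref{fenzhi}, of each fibre $\Omega_b$ into $|\Omega_b|/p^{s}$ pieces that are $p$-homogeneous with branched level set $n-1-\{i_1,\dots,i_s\}$. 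Your bookkeeping on $\widetilde\Omega$ (the bijection $\phi$, the count $|L_v|=p^{s+1}$, the correspondence between planes and residue classes mod $p^{\,n-\gamma_\Omega}$) is also correct.

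However, the proof is not complete. The decisive upper bound $|\Omega_{x,y}|\le p^{s}$ --- equivalently, that no fibre branches at level $n-\gamma_\Omega-1$ or at an unbranched intermediate level --- is exactly where you stop, and the route you sketch for it (enlarging an orthogonal family in $L^{2}(T)$ as in Lemmas~\ref{lem-min} and~\ref{lem-min2}) is conjectural: those lemmas build $\Lambda$ out of elements of $\mathcal{Z}_T$, and a hypothetical extra branching inside a single fibre of $\Omega$ does not visibly produce new zeros of $\widehat{1_T}$, so it is not clear this can be made to work. The paper closes the gap by a different, purely counting argument: each piece of the fibrewise decomposition has all its branched levels at depth at least $n-\gamma_\Omega$, hence lies in a single plane $H((p^{\gamma_\Omega},\alpha),k)$; on the other hand the $p$-homogeneity of $\widetilde\Omega$ from Proposition~\ref{prop4.7}, read through $\phi$, shows that every nonempty section $\Omega\cap H((p^{\gamma_\Omega},\alpha),k)$ has exactly $p^{s}$ points, so each nonempty section is exactly one piece and $\Omega_{x,y}$ is that piece. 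You had all of these ingredients on the table but did not assemble them; as written, the argument does not establish the lemma.
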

\begin{proof}
Assume that $\mathcal{I}_{\Omega}=\{i_1,i_2, \cdots, i_{t-1}\}$ with  $i_1< \cdots<i_s<\gamma_{\Omega}<i_{s+1}<\cdots <i_{t-1}$.

We shall characterize  the structure of  the tile $\Omega$ by induction. We distinguish two cases.

\begin{figure}[h]
  	\centering
  	\includegraphics[width=1.1\linewidth]{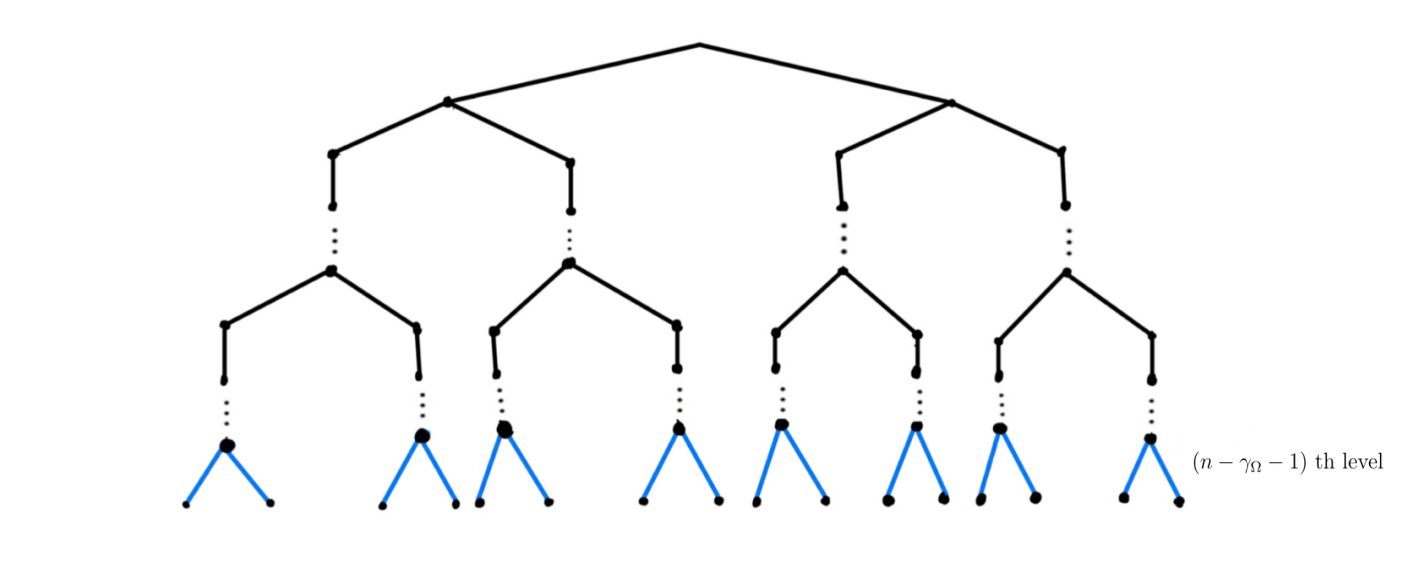}
  	\caption{The corresponding tree  of $\widetilde{\Omega}$, where the blue edges determined by the hyperplanes ${H}((p^{\gamma_{\Omega}},\alpha), k)$.}
  	\label{fig:4}
  \end{figure}

%

\textbf{Case 1:} Assume $\gamma_{\Omega}<i_1<i_2<\cdots<i_{t-1}$.
The set $\widetilde{\Omega}$ corresponding  to a $p$-homogeneous tree with branched level set \[n-1-\{\gamma_{\Omega},i_1,i_2,\cdots,i_{t-1}\}.\]
The branched levels   $(n-i_1-1), (n-i_2-1),\cdots (n-i_{t-1}-1)$ is determined by the projection $\pi_1(\Omega)$, that is,
the vertices at  $(n-i_1-1)$th, $(n-i_2-1)$$th, \cdots$, $(n-i_{t-1}-1)$th levels of the corresponding tree of the set $\pi_1(\Omega)$ have $p$ descendants.
On the other hand, there is $\alpha \in \{ 1, \cdots, p-1 \}~\text{such that }~(p^{\gamma_{\Omega}},\alpha)\in \mathcal{Z}_\Omega $. Thus, we have
$$
\widehat {1_{\Omega}}(p^{\gamma_{\Omega}}, \alpha)=\sum\limits_{(x, y) \in \Omega} {{e^{2\pi i \frac{xp^{\gamma_{\Omega}}+\alpha y p^{n-1} }{{p^{n}}}}}}=0.
$$
By Lemma \ref{52}, we get
\begin{equation}\label{hp0}
 |\Omega\cap {H}((p^{\gamma_{\Omega}},\alpha), k)|=|\Omega \cap {H}((p^{\gamma_\Omega}, \alpha), k+jp^{n-1})|.
\end{equation}
By the above argument, we conclude that for $(x,y)\in \Omega$, the sets $\Omega_{x,y}=\{x\}$ is a single point.

\textbf{Case 2:} Assume $i_1<\cdots<i_s< \gamma_{\Omega}< i_{s+1} \cdots<i_{t-1}$.
By Lemma \ref{lem:periodic}, the condition $(p^{i_1}, b)\in \mathcal{Z}_{\Omega}$ for  each $b\in \Z_p$  implies that  $p^{i_1} \in \mathcal{Z}_{\Omega_b}$ for each $b$.
Then  for each $b\in \Z_p$,  either $\Omega_b =\emptyset$, or $p \mid |\Omega_b|$ and   $\Omega_b$ can decomposed into $|\Omega_b|/p$ subsets
satisfy that  each subset contains $p$ elements and each two distinct point $x, y$ in a  same subset such that  $p^{n-1-i_1}\mid (x-y)$ and  $ p^{n-i_1}\nmid (x-y)$.
By induction,  for each $b\in \Z_p$,  either $\Omega_b =\emptyset$, or $p^s \mid |\Omega_b|$ and   $\Omega_b$ can decomposed into $|\Omega_b|/p^s$ subsets  $\Omega^{s}_{b,\ell}, 0\leq  \ell  \leq |\Omega_b|/p^s$,
satisfy that  each subset corresponding to  a $p$-homogenous tree with branch level set $n-1-\{i_1, \cdots, i_s\}$.

 The set $\widetilde{\Omega}$ corresponding  to a $p$-homogeneous tree with branched level set \[n-1-\{i_1,\cdots,i_s, \cdots, \gamma_{\Omega},i_{s+1},\cdots, i_{t-1}\}.\]
The branched levels   $(n-i_{t-1}-1), (n-i_{t-2}-1),\cdots (n-i_{s+1}-1)$ is determined by the projection $\pi_1(\Omega)$, that is,
the vertices at  $(n-i_1-1)$th, $(n-i_2-1)$$th, \cdots$, $(n-i_{t-1}-1)$th levels of the corresponding tree of the set $\pi_1(\Omega)$ have $p$ descendants.

On the other hand, there is $\alpha \in \{ 1, \cdots, p-1 \}~\text{such that }~(p^{\gamma_{\Omega}},\alpha)\in \mathcal{Z}_\Omega $. Thus, we have
$$
\widehat {1_{\Omega}}(p^{\gamma_{\Omega}}, \alpha)=\sum\limits_{(x, y) \in \Omega} {{e^{2\pi i \frac{xp^{\gamma_{\Omega}}+\alpha y p^{n-1} }{{p^{n}}}}}}=0.
$$
By Lemma \ref{52}, we get
\begin{equation}\label{hp0}
 |\Omega\cap {H}((p^{\gamma_{\Omega}},\alpha), k)|=|\Omega \cap {H}((p^{\gamma_\Omega}, \alpha), k+jp^{n-1})|.
\end{equation}

Note that  for each  $\Omega^{s}_{b,\ell}, 0\leq  \ell  \leq |\Omega_b|/p^s$,  the set  $\Omega^{s}_{b,\ell}\subset {H}((p^{\gamma_{\Omega}},\alpha), k)$ for some $k$.
By counting the cardinality  of ${H}((p^{\gamma_{\Omega}},\alpha), k)$. We know that
$\Omega\cap {H}((p^{\gamma_{\Omega}},\alpha), k)$ is either empty or  $\Omega^{s}_{b,\ell}$ for some $b$ and $\ell$.
Hence, for each $(x,y)\in \Omega$, the sets \[\Omega_{x,y}:=\{x^{\prime }\in\Z_{p^n}: (x^{\prime},y)\in \Omega  \text{ and } x^{\prime}\equiv x \!\!\mod p^{n-\gamma_{\Omega}-1}  \
\}\]  is  $p$-homogenous with  the branched level set $n-1-\{i_1,i_2 \cdots,i_s\}$.


%
%
\end{proof}

Theorem  \ref{main2} follows from Propositions \ref{case t-1}, \ref{prop4.6}, \ref{prop4.7} and Lemma \ref{lem4.9}.

\begin{figure}[h]
  	\centering
 	\includegraphics[width=1\linewidth]{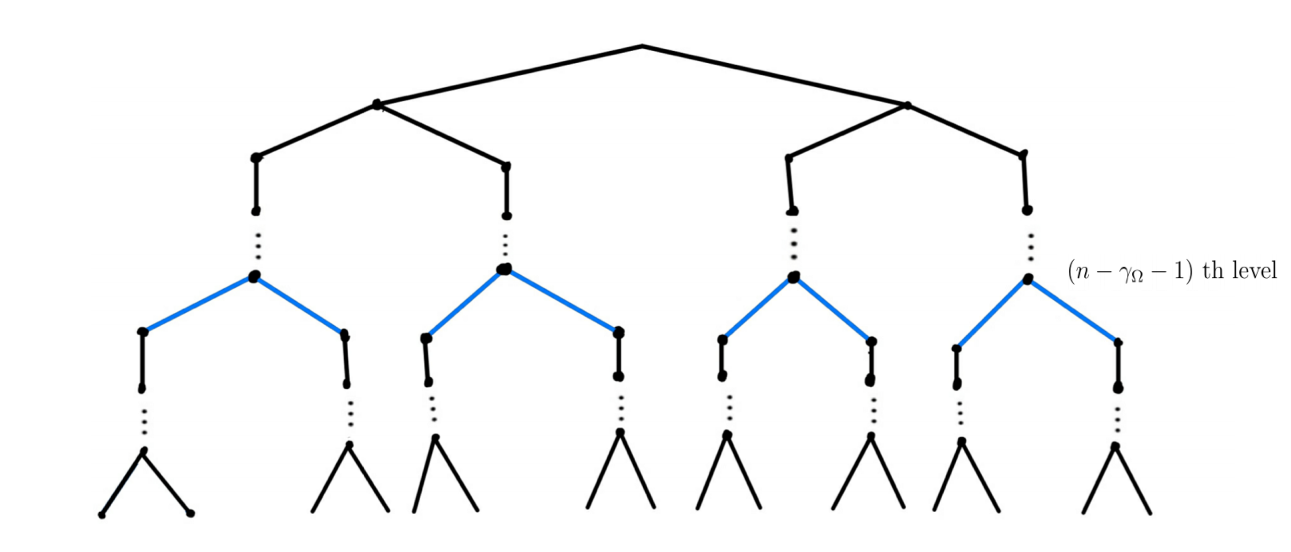}
  	\caption{Corresponding tree of $\mathcal{T}_{\widetilde{\Omega}}$, where the blue edges determined by the hyperplanes ${H}((p^{\gamma_{\Omega}},\alpha), k)$.}
  	\label{fig:5}
 \end{figure}

\bigskip

\bibliographystyle{abbrv}
\bibliography{Ref}

\end{document}